\documentclass[11pt,reqno]{amsart}
\usepackage{amsmath,amssymb,amsthm,mathtools,color}
\usepackage{graphicx}
\usepackage[shortlabels]{enumitem}
\usepackage{hyperref}

\usepackage[margin=1.20in]{geometry}
\numberwithin{equation}{section}
\newtheorem{theorem}{Theorem}[section]
\newtheorem{proposition}[theorem]{Proposition}
\newtheorem{lemma}[theorem]{Lemma}

\theoremstyle{definition}
\newtheorem{definition}[theorem]{Definition}
\newtheorem{remark}[theorem]{Remark}

\newcommand{\R}{\mathbb{R}}
\newcommand{\C}{\mathbb{C}}
\newcommand{\Z}{\mathbb{Z}}
\newcommand{\N}{\mathbb{N}}

\newcommand{\K}{\mathcal{K}}
\newcommand{\Ell}{\mathcal{E}}
\newcommand{\eps}{\varepsilon}
\newcommand{\ov}{\overline}

\title[Finite-time blow-up for the defocusing energy-supercritical NLW]{Finite-time blow-up for the real-valued defocusing\\ energy-supercritical NLW}

\author{M. Cadiot}
\address{\'Ecole Polytechnique}
\email{matthieu.cadiot@polytechnique.edu}
\author{A. D. Ionescu}
\address{Princeton University}
\email{aionescu@math.princeton.edu}
\author{S. Palasek}
\address{Princeton University}
\email{spalasek@princeton.edu}

\subjclass[2020]{35L71, 35B44, 35C06, 35B10, 65G20}
\keywords{Nonlinear wave equation, defocusing, energy-supercritical, finite-time blow-up, discrete self-similarity, computer-assisted proof}

\begin{document}

\begin{abstract}
We construct real-valued solutions of the defocusing energy-supercritical nonlinear wave equation $\partial_t^2u-\Delta u+u^p=0$ that blow up in finite time from smooth, compactly supported initial data. We consider three cases: for $p=3$ we construct blow-up solutions in all dimensions $d\geq 9$, for $p=5$ we treat all dimensions $d\geq 7$, while for $p=7$ we treat all dimensions $d\geq 6$. 

The solutions we construct here appear to be the first blow-up solutions of defocusing energy-supercritical nonlinear wave equations with smooth real-valued data.  The main step is the construction of nonzero \emph{discretely self-similar profiles} that are periodic in self-similar time and real-analytic across the light cone. We find approximate profiles by numerical continuation from rotating self-similar solutions of the complex-valued equation and upgrade them to exact solutions by a computer-assisted Newton--Kantorovich argument in a Banach algebra of Fourier--Chebyshev coefficients. The main analytic ingredients are an exact description of the linear mode operators and uniform bounds for their inverses.

The codes for the computer-assisted parts of the proofs, together with the approximate solutions, are available on GitHub \cite{code}.
\end{abstract}

\maketitle
\tableofcontents

\section{Introduction}\label{sec:intro}

\subsection{The problem}
We consider the defocusing nonlinear wave equations
\begin{equation}\label{eq:nlw}
\partial_t^2u-\Delta u+u^p=0,\qquad u:I\times\R^{d}\to\R,
\end{equation}
where $I\subset\R$ is an interval, $d\geq 2$ and $p\geq 3$ is odd. The equation is invariant under the scaling $u_\lambda(t,x):=\lambda^{\frac{2}{p-1}}u(\lambda t,\lambda x)$, which preserves the norm of $\dot H^{s_c}\times\dot H^{s_c-1}$, where $s_c:=\frac{d}{2}-\frac{2}{p-1}$ is the critical regularity. Regular solutions conserve the energy
\[
E(u):=\int_{\R^{d}}\tfrac12|\partial_tu|^2+\tfrac12|\nabla u|^2+\frac{1}{p+1}u^{p+1}\,dx.
\]

For energy-subcritical ($s_c<1$) defocusing wave equations, smooth solutions are global by classical arguments (see e.g.\ \cite{SSbook,So}); in the energy-critical case ($s_c=1$), global regularity was established by Struwe \cite{St}, Grillakis \cite{Gr} and Shatah--Struwe \cite{SS1,SS2}. In the energy-supercritical case ($s_c>1$), global regularity of smooth solutions is a well-known open problem. On the positive side, conditional results show that bounded critical norms imply global existence and scattering in various ranges of dimensions and powers; see for example Kenig--Merle \cite{KM}, Killip--Visan \cite{KV1,KV2}, Bulut \cite{Bu}, Dodson--Lawrie \cite{DL} and Duyckaerts--Yang \cite{DY}. Large global solutions were constructed by Krieger--Schlag \cite{KS}. 

Concerning singularity formation, Tao \cite{Ta,Ta2} constructed finite-time blow-up solutions for defocusing supercritical wave and Schr\"odinger systems with specially designed nonlinearities. Merle--Rapha\"el--Rodnianski--Szeftel \cite{MRRS1,MRRS1b,MRRS2} discovered the front compression mechanism, through the construction of smooth self-similar imploding profiles for the compressible Euler equations, and used it to prove blow-up for defocusing energy-supercritical nonlinear Schr\"odinger equations in several cases. The construction was extended by Buckmaster--Cao-Labora--G\'omez-Serrano \cite{BCG} to other adiabatic exponents $\gamma>1$,  by Cao-Labora--G\'omez-Serrano--Shi--Staffilani \cite{CGSS1,CGSS2} to non-radial imploding solutions in the whole space and on the torus, and by Buck--Carballeira--G\'omez-Serrano--Shi \cite{BCGS}, with computer assistance, to defocusing energy-supercritical NLS in low dimensions.

The same mechanism was adapted to prove finite-time blow-up for \emph{complex-valued} solutions of defocusing supercritical wave equations, by Shao--Wei--Zhang \cite{SWZ} and Buckmaster--Chen \cite{BC}. For real-valued solutions of defocusing wave equations the question has remained open; see the discussion in \cite{BC,SWZ}. 

In this paper we solve this problem in the cases $(p=3, d\geq 9)$, $(p=5, d\geq 7)$, and $(p=7, d\geq 6)$, by proving finite-time blow-up of smooth real-valued solutions of \eqref{eq:nlw}. Precisely, we construct \emph{exact discretely self-similar} radial solutions in a backward light cone in the cases $(p=3, d=9)$, $(p=5, d=7)$, and $(p=7, d=6)$, with profiles periodic in self-similar time and real-analytic across the cone. Adding dummy variables in higher dimensions, cutting off their initial data, and using finite speed of propagation leads to blow-up solutions that remain smooth until the blow-up time.

We stress that the main difficulty concerns smooth solutions. Rough self-similar solutions of the defocusing equation, which are not smooth across the light cone, can be constructed by perturbative methods, as noted in \cite[footnote~2]{Ta} with reference to \cite{Pl,RY}. In Appendix~\ref{app:rough} we give a self-contained version of this remark, valid for every energy-supercritical exponent, with bounded, compactly supported, finite-energy data. 

Self-similar blow-up is classical for focusing wave equations, where the profiles are stationary in self-similar variables; see for instance \cite{MZ,BMW,BBMW,DS,DD,GlS,CGS} and the references therein. Discretely self-similar solutions, whose profiles are periodic in the self-similar time, were first observed by Choptuik \cite{Ch} in the critical collapse of a self-gravitating scalar field, and were further investigated by Reiterer--Trubowitz \cite{RT} (with computer assistance) and by Cicortas--Kehle \cite{CK}. Discretely self-similar blow-up for a geometric wave equation, with values in $\mathbb S^1$, was recently constructed by Glogi\'c--Hilditch--Wallauch \cite{GHW}. 

We prove the existence of the profiles by a computer-assisted Newton--Kantorovich argument around numerical approximations, using Fourier--Chebyshev expansions and rigorous error bounds. This approach, often formulated in terms of radii polynomials, goes back to \cite{DLM}; see \cite{LR} for Chebyshev series, \cite{BBLV} for time-periodic solutions of a PDE, and \cite{vdBL,GS} for surveys. It has developed into a flexible tool for the constructive existence of specific solutions of nonlinear PDEs, for instance on unbounded domains \cite{CLN}, for the rigorous integration of parabolic equations with fully spectral Fourier--Chebyshev expansions \cite{CL, CL22, vdBBS, TLJO, DLT}, and recently for overhanging periodic steady water waves \cite{CaHa}. In the study of singularity formation, computer-assisted proofs of this and related types appear in \cite{RT,CH1,CH2,BCG,DF,BCGS}.

\subsection{The main theorem}
We consider the three cases
\begin{equation}\label{eq:cases}
(d,p)\in\big\{(9,3),\ (7,5),\ (6,7)\big\},\qquad\text{for which}\qquad s_c=\tfrac72,\ 3,\ \tfrac83\ \text{ respectively,}
\end{equation}
and we write $\alpha:=\frac2{p-1}$. For $T>0$ let
\[
\K_T:=\{(t,x)\in[0,T)\times\R^{d}:|x|\le T-t\}
\]
denote the backward light cone with vertex $(T,0)$.

\begin{theorem}\label{thm:main}
Let $(d,p)$ be one of the pairs in \eqref{eq:cases}.

(i) There exist $\omega^*>0$ and a real-analytic function $W^*:\R\times(-\rho_*,\rho_*)\to\R$, where $\rho_*:=1.0011$, such that $W^*$ is not identically zero, is $2\pi$-periodic in its first variable and even in its second variable, and satisfies $W^*(\theta+\pi,\rho)=-W^*(\theta,\rho)$. Moreover
\begin{equation}\label{eq:omega}
|\omega^*-\omega_{d,p}|\le 10^{-10},\qquad \omega_{d,p}:=\begin{cases}
40.2602362339 & \text{if } (d,p)=(9,3),\\
24.0682155676 & \text{if } (d,p)=(7,5),\\
19.9084444867 & \text{if } (d,p)=(6,7),
\end{cases}
\end{equation}
and for every $T>0$ the function
\begin{equation}\label{eq:ustar}
u^*(t,x):=(T-t)^{-\alpha}\,W^*\Big(\omega^*\log\frac1{T-t},\,\frac{|x|}{T-t}\Big)
\end{equation}
is a smooth solution of \eqref{eq:nlw} on $\K_T$.

(ii) For every $T>0$ there are radial data $(u_0,u_1)\in C_c^\infty(\R^{d})\times C_c^\infty(\R^{d})$ such that the maximal smooth solution $u$ of \eqref{eq:nlw} with $(u,\partial_tu)(0)=(u_0,u_1)$ coincides with $u^*$ on $\K_T$, and extends to a $C^\infty$ function on $([0,T]\times\R^{d})\setminus\{(T,0)\}$. In particular, $u$ is bounded on $[0,T)\times\{|x|\ge\eps\}$ for every $\eps>0$, while
\[
\|u(t)\|_{L^\infty(\R^{d})}\ge(T-t)^{-\alpha}\qquad(0\le t<T),
\]
so that $\lim_{t\uparrow T}\|u(t)\|_{L^\infty(\R^{d})}=\infty$. The scale-invariant norms of $u(t)$ on $\{|x|\le T-t\}$, $\|(\nabla u,\partial_tu)(t)\|_{L^{d(p-1)/(p+1)}}$ and $\|u(t)\|_{L^{d(p-1)/2}}$, are continuous periodic functions of $\log(T-t)$, and the energy of $u(t)$ on $\{|x|\le T-t\}$ is bounded by $C(T-t)^{2(s_c-1)}$.

(iii) Moreover, for every $d'>d$, writing $x=(x',x'')\in\R^{d}\times\R^{d'-d}$ and taking $\chi\in C_c^\infty(\R^{d'-d})$ with $\chi=1$ on $\{|x''|\le2T\}$, the maximal smooth solution $v$ of \eqref{eq:nlw} in $\R^{1+d'}$ with the smooth, compactly supported data $\big(\chi(x'')u_0(x'),\chi(x'')u_1(x')\big)$ has lifespan $T'\le T$, coincides with $u(t,x')$ on $\{|x''|\le2T-t\}$ for $t<T'$, and satisfies $\limsup_{t\uparrow T'}\|v(t)\|_{L^\infty(\R^{d'})}=\infty$.
\end{theorem}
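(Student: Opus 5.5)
The plan is to reduce (i) to a periodic boundary-value problem for the profile, solve it by a computer-assisted fixed-point argument, and then derive (ii) and (iii) by soft arguments (finite speed of propagation, cutoffs, local well-posedness).

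\emph{Self-similar reduction.} Set $s=\log\frac1{T-t}$, $\rho=|x|/(T-t)$, $\theta=\omega s$ and $u=(T-t)^{-\alpha}W(\theta,\rho)$. A direct computation turns \eqref{eq:nlw} for radial $u$ into
\[
\omega^2\partial_\theta^2W+2\omega\rho\,\partial_\theta\partial_\rho W-(1-\rho^2)\partial_\rho^2W+\Big(2(\alpha+1)\rho-\tfrac{d-1}{\rho}\Big)\partial_\rho W+2\alpha\omega\,\partial_\theta W\cdot(\text{const})+\alpha(\alpha+1)W+W^p=0 ,
\]
with the exact coefficients to be fixed by the computation. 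This PDE degenerates at $\rho=1$ (the light cone) and at $\rho=0$ (the axis), where it has regular singular points.

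\emph{Solving for the profile.} I look for $W$ in the class of functions that are $2\pi$-periodic in $\theta$, odd under $\theta\mapsto\theta+\pi$ (only odd Fourier modes appear, which is compatible with odd $p$), and even in $\rho$. I expand $W=\sum_{k\ \mathrm{odd}}\sum_n a_{k,n}e^{ik\theta}T_{2n}(\rho/\rho_*)$ with a reality constraint, where $T_{2n}$ are Chebyshev polynomials, and treat $\omega$ as an unknown. To remove the phase invariance $\theta\mapsto\theta+c$ and the trivial solution $W=0$, I add a phase condition (for instance $\mathrm{Im}\,a_{1,0}=0$) and a normalization. Writing the problem as $F(a,\omega)=0$ on a weighted $\ell^1$ Banach algebra, which controls the nonlinearity $W^p$ through discrete convolutions, the approximate solution $\bar a$ comes from numerics. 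Analyticity on $(-\rho_*,\rho_*)$ with $\rho_*>1$ is automatic once the coefficients decay geometrically; that decay is built into the weights. I then apply a Newton--Kantorovich (radii polynomial) theorem: build an approximate inverse $A$ of $DF(\bar a)$, bound $\|AF(\bar a)\|\le Y$, $\|I-ADF(\bar a)\|\le Z_1$, and a local Lipschitz bound $Z_2$, and check $Z_2r^2-(1-Z_1)r+Y<0$ for some $r$. That gives an exact solution within $r$ of $\bar a$, which yields \eqref{eq:omega} and nontriviality.

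\emph{Main obstacle.} The hard step is the $Z_1$ bound on the infinite tail. Each Fourier mode $k$ gives a linear mode operator $L_k=-k^2\omega^2+ik\omega(\dots)+(\text{singular radial operator})$, which is degenerate at $\rho=1$. Its inverse has to be bounded uniformly in large $|k|$ on the Chebyshev space, and regularity across the cone has to be imposed rather than lost. I would first derive an exact formula for $L_k^{-1}$ through hypergeometric-type solutions of the radial ODE, choosing the branch that is analytic at both $\rho=0$ and $\rho=1$. From this I would extract $\|L_k^{-1}\|\lesssim |k|^{-1}$ or better for $|k|\ge K$, and handle finitely many modes by interval arithmetic.

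\emph{Parts (ii) and (iii).} Smoothness of $W^*$ on $\rho<\rho_*$ makes $u^*$ smooth on a neighbourhood of $\K_T$ minus the vertex, namely $\{|x|<\rho_*(T-t)\}$. Choose radial data equal to $(u^*,\partial_tu^*)(0)$ on $|x|\le\rho_*'T$ with $1<\rho_*'<\rho_*$, and smoothly cut off outside. Finite speed of propagation gives $u=u^*$ on $\K_T$ and on a slightly larger cone. Outside the cone, the solution stays smooth up to time $T$ away from $(T,0)$, which I would argue by local theory together with domain of dependence, using that $u^*$ is smooth on $\{\rho\le\rho_*'\}$ and bounded there away from the vertex. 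The lower bound $\|u(t)\|_\infty\ge(T-t)^{-\alpha}$ follows after normalizing $\max_\theta|W^*(\theta,0)|\ge1$, which is arranged by scaling in $\theta$-time, i.e.\ by choosing the time shift suitably (replacing $T-t$ by $\lambda(T-t)$ if needed). The scale-invariant norms are periodic by construction, and the energy bound is a change of variables. For (iii), finite speed of propagation shows $v(t,x)=u(t,x')$ on $\{|x''|\le 2T-t\}$ for $t<\min(T,T')$. Since $u$ is unbounded, $T'\le T$, and if $T'<T$ the $\limsup$ follows from the blow-up criterion.
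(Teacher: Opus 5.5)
Your architecture for (i) matches the paper's: Newton--Kantorovich in a weighted Fourier--Chebyshev $\ell^1$ algebra, with $\omega$ unknown and a phase condition. There are, however, genuine gaps.

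\textbf{Regularity outside the cone in (ii).} Finite speed of propagation only gives $u=u^*$ on $\{|x|\le\rho_*'(T-t)\}\cap\{t<T^*\}$, where $T^*$ is the lifespan of the smooth solution. Your claim that $u=u^*$ on all of $\K_T$ already presupposes $T^*\ge T$. Outside that region you are solving an energy-supercritical equation with non-small data. Local theory plus domain of dependence cannot exclude a singularity forming there before time $T$; that is exactly the open global regularity problem.

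The missing idea is the one place where the defocusing sign enters. For $t<T^*$ the solution is compactly supported, and its energy is conserved with a nonnegative potential term, so $\|\nabla u(t)\|_{L^2}^2\le 2E_0$. Since $u$ is radial, writing $u(t,x)=-\int_{|x|}^\infty\partial_ru\,ds$ and applying Cauchy--Schwarz gives $|u(t,x)|\lesssim E_0^{1/2}|x|^{-(d-2)/2}$, as in \eqref{eq:radialbound}. So $u$ is bounded on $\{|x|\ge\eps\}$. If $T^*<T$, $u$ is also bounded near the axis, where $u=u^*$ and $T-t\ge T-T^*>0$; this contradicts the $L^\infty$ continuation criterion. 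Hence $T^*=T$, and the same bound together with finite speed of propagation gives the smooth extension to $([0,T]\times\R^d)\setminus\{(T,0)\}$.

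\textbf{The lower bound, the normalization, and the linear tail.} The bound $\|u(t)\|_{L^\infty}\ge(T-t)^{-\alpha}$ must hold at every time. So you need $\max_{0\le\rho\le1}|W^*(\theta,\rho)|\ge1$ for \emph{all} $\theta$, not $\max_\theta|W^*(\theta,0)|\ge1$. No rescaling can produce this:
\begin{itemize}
\item The scaling $u\mapsto\lambda^{\alpha}u(\lambda t,\lambda x)$ acts on DSS profiles only as the translation $\theta\mapsto\theta-\omega\log\lambda$.
\item Your $(\lambda(T-t))^{-\alpha}W^*(\cdot,|x|/(\lambda(T-t)))$ equals $U(\lambda t,x)$ for a solution $U$. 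It rescales time but not space, so it is not a solution unless $\lambda=1$.
\end{itemize}
Since $\Lambda(\omega)W+W^p=0$ is not homogeneous, the amplitude of $W^*$ is fixed by the equation. The constant $1$ must therefore be verified rigorously from the enclosure $\|\tilde W-\bar W\|_{\ell^1}\le\delta_W$; the paper does this in Remark~\ref{rem:size}, where the margin is only $1.008$ for $(6,7)$.

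For the same reason, drop the extra ``normalization''. With $\omega$ unknown, the phase condition alone makes the system square, and an additional equation would make $DF$ non-surjective. Nontriviality follows instead from $\|\bar W\|_{\ell^1}>\delta_W$.

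Finally, your tail estimate is only announced, not derived. The rate you need is $\|\Lambda_n^{-1}\|\lesssim n^{-2}$, not $n^{-1}$: $\Lambda(\omega)$ contains $-n^2\omega^2$ with $\omega$ unknown, so $\mathcal N^2\Lambda^{-1}$ and $\partial_\omega\Lambda\,\Lambda^{-1}$ must be bounded. You also need analytic control of the Chebyshev tails of the finitely many low Fourier modes, which interval arithmetic cannot supply.

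The paper obtains both without analysing the turning point at $n_2\approx\frac12|n_1|\omega$:
\begin{itemize}
\item In $x=2\rho^2-1$ the axis and the cone are the endpoints of the Chebyshev interval.
\item $\Lambda_n$ is then upper triangular, with a rank-two part above the diagonal.
\item Its columns are given exactly by the $2\times2$ recursion of Lemma~\ref{lem:rec}.
\item In the norm $|\cdot|_\nu$, the transfer matrices have norm at most $\frac{1+\nu}2<\nu$ for large $j$ or large $|n_1|$.
\end{itemize}
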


\begin{figure}[t]
\centering
\IfFileExists{fig_profiles.pdf}{\includegraphics[width=\textwidth]{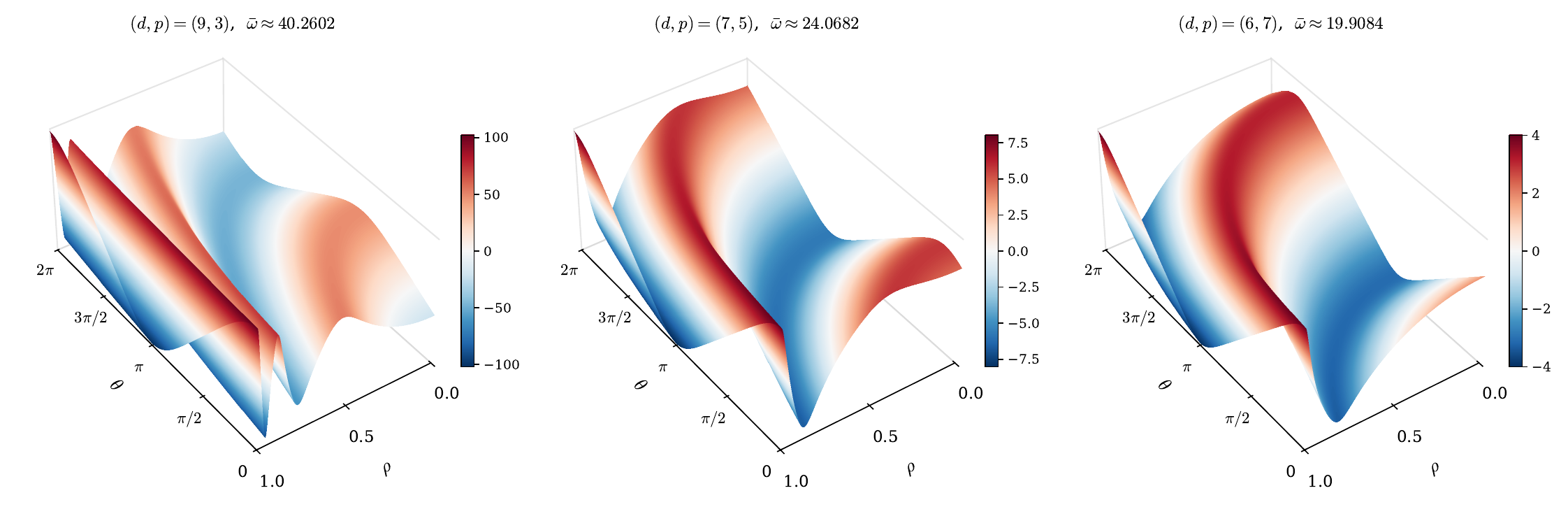}}{\fbox{\texttt{fig\_profiles.pdf}}}
\caption{The approximate profiles $\bar W(\theta,\rho)$ of Proposition~\ref{th:proof} on $[0,2\pi]\times[0,1]$, for $(d,p)=(9,3)$, $(7,5)$ and $(6,7)$ (the color scales differ). The profiles $W^*$ of Theorem~\ref{thm:main} satisfy $|W^*-\bar W|\le\delta_W$ on this set, with $\delta_W$ given in Table~\ref{tab:proof}.}
\label{fig:profile}
\end{figure}

\begin{remark}\label{rem:intro}
Several remarks on the theorem are in order.

\begin{enumerate}[(i)]
\item \emph{The profile equation.} The profiles $W^*$ solve the hyperbolic equation \eqref{eq:ssw} below, for which the light cone $\rho=1$ is characteristic. Smoothness across $\rho=1$ is essential; our profiles are real-analytic there. For each nonzero Fourier mode in the self-similar time, the linear equation admits a singular branch behaving like $(1-\rho)^{s_c-\frac12-in\omega}$, which smoothness excludes.

\item \emph{The computer-assisted part.} The proof uses the finite-dimensional inequalities of Proposition~\ref{th:proof}. They are verified by computer with rigorous error control: all the computations are done in interval arithmetic \cite{julia_interval,julia_olivier}, and the code is available at \cite{code}. 

\item \emph{Discretely self-similar profiles.} The profiles $W^*$ are genuinely time-dependent. We were not able to find smooth real self-similar (that is, independent of the self-similar time) profiles of \eqref{eq:nlw}, and for $s_c\le\frac32$ the maximum principle rules them out (cf.\ Remark~\ref{rem:noCSS}). Rough profiles, however, exist for every energy-supercritical exponent (Appendix~\ref{app:rough}).

\item \emph{Other exponents and open questions.} The continuation of Section~\ref{sec:profiles} in $d$ and $p$ also produces approximate real profiles for other exponents, for instance $(d,p)=(10,5)$, $(10,3)$, $(8,7)$, $(6,9)$, $(5,13)$, and, from other families of rotating profiles, for smaller dimensions such as $(8,3)$, $(6,5)$ and $(5,7)$. The analytic framework of Sections~\ref{sec:modes} and~\ref{sec:proof} applies to all these cases. The computer-assisted existence proof is carried out here only for the three pairs in \eqref{eq:cases}: for each exponent $p$, they have the smallest dimension among the profiles we found for which the finite-dimensional part of the proof remains manageable. It is not clear to us whether this mechanism, based on radial discrete self-similarity, could be applied to prove blow-up in the full supercritical range $s_c>1$, or whether this is possible only for sufficiently supercritical problems. The stability of the blow-up is also a natural question. We hope to return to these issues in the future.
\end{enumerate}
\end{remark}

\subsection{Strategy of the proof and the main ideas}\label{sec:novel} We now discuss some of the main steps in the proof of Theorem~\ref{thm:main}.

\subsubsection{Reduction to a fixed-point problem.} By standard arguments (see Section~\ref{sec:reduction}), it suffices to construct, for each of the three pairs in \eqref{eq:cases}, a real-valued discretely self-similar (DSS) profile $W$: a nonzero real-analytic solution of the self-similar equation \eqref{eq:ssw} on $\R\times(-\rho_1,\rho_1)$, $\rho_1>1$, which is $2\pi$-periodic in $\theta$, with the frequency $\omega$ as an unknown. We impose the half-period antisymmetry $W(\theta+\pi)=-W(\theta)$, compatible with the odd nonlinearity, and a phase normalization. 

For each profile we expand $$W(\theta,\rho)=\sum_{n\ \mathrm{odd}}V_n(\rho)e^{in\theta},\qquad V_n(\rho)=\sum_kW_{n,k}T_k(2\rho^2-1),$$ where $T_k$ is the Chebyshev polynomial of degree $k$, and work in a weighted $\ell^1$ space of coefficients, which is a Banach algebra and whose elements are analytic in a strip in $\theta$ and on a Bernstein ellipse in $2\rho^2-1$ containing a neighborhood of the light cone. In terms of the coefficients, the profile equation becomes $\Lambda(\omega)W+W^p=0$, where $\Lambda_n(\omega)$ is the linear operator acting on the $n$-th Fourier mode and $W^p$ is the $p$-th power of $W$ in the Banach algebra; a phase condition removes the invariance under translations in $\theta$. Existence is then proved by the Newton--Kantorovich theorem, applied to numerical approximate zeros $\bar U=(\bar\omega,\bar W)$, represented in Figure~\ref{fig:profile}, and an approximate inverse $A$ of the linearization, which coincides with $\Lambda(\bar\omega)^{-1}$ on the high modes. Thus, the main steps of the paper are the construction of good approximate DSS profiles $\bar U$ and the analysis of the linear operators $\Lambda_n(\omega)$, whose inverses must be controlled uniformly in $n$.

\subsubsection{The numerical profiles} Finding real-valued DSS profiles is a significant difficulty. Real self-similar smooth profiles, independent of the self-similar time, do not seem to exist (Remark~\ref{rem:noCSS}), and direct numerical searches for time-periodic profiles are challenging. We found the profiles by numerical continuation from the rotating self-similar solutions of the complex equation, $u=(T-t)^{-\alpha-i\omega}Q(|x|/(T-t))$, which are comparatively easy to compute, through a family of two-component systems that breaks the $U(1)$ symmetry. 

The computed branch reaches real profiles through apparent pitchfork bifurcations. Section~\ref{sec:origin} describes this numerical route and explains the transverse degeneracies predicted by symmetry. The continuation supplies the approximate profiles; their rigorous certification is carried out directly and does not require a proof of the continuation path or its apparent bifurcations. The same numerical continuation idea was used by Ionescu--Jia--Palasek \cite[Section~2.2]{IJP} to construct swirl-free, axially symmetric approximate profiles for the self-similar Navier--Stokes equations.

\subsubsection{The linear analysis} The Newton--Kantorovich argument requires an approximate inverse of the linearization, and only finitely many Fourier modes can be treated numerically. For all the other modes we must invert the linear operators $\Lambda_n(\omega)$ analytically, with bounds that are uniform in $n$. Two features of the problem make this delicate.

(1) At the characteristic cone $\rho=1$, the coefficient of the radial second derivative vanishes. For every nonzero Fourier mode, the linear equation has a regular branch and a singular branch behaving like $(1-\rho)^{s_c-\frac12-in\omega}$. We select the regular branch by requiring analyticity across $\rho=1$. This condition is built into the function space through the analytic weights, and no boundary condition is imposed at the cone. A similar device, a change of variables that builds parity conditions into the function space, is used in \cite{IJP}.

(2) The operators $\Lambda_n$ are non-normal operators of transport type, and their inverses can be much larger than the size of their diagonal entries suggests; the relevant quantity is a product of many transfer matrices, which passes through a turning point where two eigenvalues of modulus one collide.

The structure of the operators makes the analysis possible. In the variable $z=\rho^2$, the linear part of the mode-$n$ equation is a hypergeometric operator (Lemma~\ref{lem:hyp}). In the Chebyshev basis it is upper triangular. Consequently every column of $\Lambda_n^{-1}$ is computed exactly by a backward recursion with $2\times2$ transfer matrices (Lemma~\ref{lem:rec}). The singular branches are not analytic at $\rho=1$, while the elements of our spaces are, and $\Lambda_n^{-1}$ is bounded on these spaces (Lemma~\ref{lem:kappa}). In particular there are no small divisors, in contrast with time-periodic problems for wave equations on compact domains \cite{CW,Bo,Be}; this reflects the fact that no characteristic enters the backward light cone through its lateral boundary.

The key estimate is
\[
\|\Lambda_n^{-1}\|_{\ell^1_\nu\to\ell^1_\nu}\le C(n\omega)^{-2},\qquad |n|>K_1,
\]
with an explicit constant $C$ (cf. Lemma~\ref{lem:T1}). Its proof does not analyze the turning point. Instead, we use a single norm on $\C^2$, adapted to the eigenvalue $-1$ shared by all the limiting transfer matrices, in which these matrices have norm at most $\frac{1+\nu}2$ (Lemma~\ref{lem:transfer}); the weight $\nu>\frac{1+\nu}2$ of the norm, that is, the analyticity of the profile across the light cone, then dominates the products of transfer matrices, and the bound follows. The same argument controls the operators $\partial_\omega\Lambda_n\Lambda_n^{-1}$, which account for the dependence on the frequency $\omega$, and the far Chebyshev columns of $\Lambda_n^{-1}$ for the finitely many remaining modes (Lemma~\ref{lem:fixedn}). None of this depends on the particular choice of $(d,p)$; see Remark~\ref{rem:general}. These fundamental linear estimates allow us to derive explicit and quantitative bounds for the Newton--Kantorovich theorem (Proposition~\ref{th : radii polynomial}), leading to the constructive existence proofs of the blow-up profiles.

\subsection{Organization}
In Section~\ref{sec:reduction} we introduce self-similar variables, state the existence of the profiles (Proposition~\ref{prop:profiles}) and derive Theorem~\ref{thm:main} from it. Section~\ref{sec:origin} explains how the profiles were found, starting from rotating profiles of the complex equation; it is not used in the proof. In Section~\ref{sec:modes} we describe the mode operators in the Chebyshev basis, set up the zero-finding problem, recall the Newton--Kantorovich theorem, and construct the approximate solution and the approximate inverse. Section~\ref{sec:proof} contains the computation of the bounds required by the Newton--Kantorovich theorem, including the uniform bounds for the inverses of the mode operators (Section~\ref{sec:main}), and the proof of Proposition~\ref{prop:profiles} (Section~\ref{sec:result}), which completes the proof of Theorem~\ref{thm:main}. Appendix~\ref{app:rough} contains, for comparison, an elementary construction of rough self-similar blow-up solutions, for every energy-supercritical exponent.

\subsection{Acknowledgements and AI disclosure}

A.~D.~I.\ was supported in part by NSF grant DMS-2407694
and a Simons Collaboration Grant on Wave Turbulence.   M.~C.\ was supported by the ANR project CAPPS (ANR-23-CE40-0004-01) and by the FMJH (ANR-22-EXES-0013). The authors acknowledge that this work was performed using the Princeton Research Computing resources, a consortium including the Princeton Institute for Computational Science and Engineering (PICSciE) and Research Computing at Princeton University.

This paper and the code for the computer-assisted proof were written by the authors and were not generated by AI. However, Anthropic's Claude Fable 5.1 played a significant role in the discovery of the discretely self-similar solutions, by the numerical continuation method described in Section~\ref{sec:origin}, while both Claude Fable 5.1 and OpenAI's ChatGPT Astra 6 assisted with language editing, bibliographic checks, checking calculations, and the verification of the rigorous numerics. The authors reviewed all AI-assisted revisions and take full responsibility for the paper.

\section{Self-similar variables and the reduction to profiles}\label{sec:reduction}

\subsection{The profile equation} Assume that $d\in\{3,4,\ldots\}$ and $p\geq 3$ is an odd integer such that
\begin{equation}\label{eq:supercritical}
d>2+\frac{4}{p-1},\qquad\text{that is}\qquad s_c:=\frac d2-\alpha>1,\qquad\text{where}\quad\alpha:=\frac{2}{p-1}.
\end{equation}
Let $T>0$ and $\omega>0$, and introduce the self-similar variables
\begin{equation}\label{eq:ssvar}
\tau:=\log\frac1{T-t},\qquad\theta:=\omega\tau,\qquad\rho:=\frac{|x|}{T-t},\qquad u(t,x):=(T-t)^{-\alpha}\,W(\theta,\rho).
\end{equation}
The frequency $\omega$ is a free parameter, which normalizes the period: a function $W$ which is $2\pi$-periodic in $\theta$ corresponds to a solution which is $2\pi/\omega$-periodic in the self-similar time $\tau$.

\begin{lemma}\label{lem:ss}
A radial function $u$ of the form \eqref{eq:ssvar} solves \eqref{eq:nlw} in $\{|x|<T-t\}$ if and only if $W$ solves, for $\rho\in(0,1)$,
\begin{equation}\label{eq:ssw}
\begin{split}
\omega^2W_{\theta\theta}&+2\omega\rho W_{\theta\rho}+(2\alpha+1)\omega W_\theta-(1-\rho^2)W_{\rho\rho}\\
&-\Big(\frac{d-1}\rho-2(\alpha+1)\rho\Big)W_\rho+\alpha(\alpha+1)W+W^p=0.
\end{split}
\end{equation}
\end{lemma}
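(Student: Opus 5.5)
The proof is a direct chain-rule computation. We write $s:=T-t>0$ and $r:=|x|$, so that $\tau=-\log s$, $\theta=\omega\tau$ and $\rho=r/s$. Then
\[
\partial_t\tau=s^{-1},\qquad\partial_t\rho=\rho s^{-1},\qquad\partial_r\rho=s^{-1}.
\]
Along $W(\theta,\rho)$ this gives
\[
\partial_t=s^{-1}\big(\omega\partial_\theta+\rho\partial_\rho\big),\qquad \partial_r=s^{-1}\partial_\rho .
\]
The first thing to note is that $\partial_t(s^{-\alpha})=\alpha s^{-\alpha-1}$. Hence
\[
\partial_t u=s^{-\alpha-1}\big(\alpha+\omega\partial_\theta+\rho\partial_\rho\big)W=:s^{-\alpha-1}\,\mathcal{D}W .
\]
Applying the same rule again, with $\alpha$ replaced by $\alpha+1$, gives
\[
\partial_t^2u=s^{-\alpha-2}\big(\alpha+1+\omega\partial_\theta+\rho\partial_\rho\big)\big(\alpha+\omega\partial_\theta+\rho\partial_\rho\big)W .
\]
For a radial function the Laplacian is $\partial_r^2+\frac{d-1}{r}\partial_r$. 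Therefore
\[
\Delta u=s^{-\alpha-2}\Big(W_{\rho\rho}+\frac{d-1}{\rho}W_\rho\Big).
\]
For the nonlinearity, $u^p=s^{-\alpha p}W^p$, and $\alpha p=\alpha+2$ because $\alpha(p-1)=2$. So all three terms of \eqref{eq:nlw} carry the common factor $s^{-\alpha-2}$. After dividing by it, \eqref{eq:nlw} becomes a PDE for $W$ alone.

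The remaining step is to expand the product of the two first-order operators. The operators $\omega\partial_\theta$ and $\rho\partial_\rho$ commute. We also use $(\rho\partial_\rho)^2=\rho^2\partial_\rho^2+\rho\partial_\rho$. The expansion then reads
\[
\omega^2W_{\theta\theta}+2\omega\rho W_{\theta\rho}+\rho^2W_{\rho\rho}+\rho W_\rho+(2\alpha+1)(\omega W_\theta+\rho W_\rho)+\alpha(\alpha+1)W .
\]
Subtracting $W_{\rho\rho}+\frac{d-1}{\rho}W_\rho$ and collecting terms gives the following coefficients. The $W_{\rho\rho}$ coefficient is $-(1-\rho^2)$. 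The $W_\rho$ coefficient is $(2\alpha+2)\rho-\frac{d-1}{\rho}$. The remaining terms are exactly those of \eqref{eq:ssw}.

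Both directions of the equivalence hold because the map $(t,x)\mapsto(\theta,\rho)$ is smooth on the cone minus the axis. On $\{|x|<T-t\}$ this map covers $\theta\in\R$, $\rho\in(0,1)$. Moreover, the prefactor $s^{-\alpha-2}$ never vanishes. So $u$ solves \eqref{eq:nlw} there if and only if $W$ solves \eqref{eq:ssw} for $\rho\in(0,1)$. The axis $\rho=0$ is excluded in the statement, so no regularity issue arises there. There is no real obstacle in this lemma; the only care needed is the bookkeeping of the factor $\alpha$ coming from differentiating $s^{-\alpha}$, and of the shift $\alpha\to\alpha+1$ in the second time derivative.
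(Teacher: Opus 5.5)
Your proof is correct and is the same chain-rule computation as the paper's: you factor out $(T-t)^{-\alpha-2}$ using $p\alpha=\alpha+2$, write $\partial_t^2$ as $(\alpha+1+\omega\partial_\theta+\rho\partial_\rho)(\alpha+\omega\partial_\theta+\rho\partial_\rho)$, and expand. One small precision: the cone $\{|x|<T-t\}$ does contain the axis $x=0$, so in the converse direction the equation on the axis follows by continuity of the (smooth) solution, not because the statement excludes the axis.
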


\begin{proof}
Since $\partial_t\theta=\omega e^{\tau}$ and $\partial_t\rho=\rho e^{\tau}$, where $e^{\tau}=(T-t)^{-1}$, we have $\partial_tu=e^{(\alpha+1)\tau}(\mathcal D+\alpha)W$ with $\mathcal D:=\omega\partial_\theta+\rho\partial_\rho$, and then $\partial_t^2u=e^{(\alpha+2)\tau}(\mathcal D+\alpha+1)(\mathcal D+\alpha)W$. Also $\Delta u=e^{(\alpha+2)\tau}(W_{\rho\rho}+\frac{d-1}\rho W_\rho)$ and, since $p\alpha=\alpha+2$, $u^p=e^{(\alpha+2)\tau}W^p$. Expanding $(\mathcal D+\alpha+1)(\mathcal D+\alpha)=\mathcal D^2+(2\alpha+1)\mathcal D+\alpha(\alpha+1)$, with $\mathcal D^2=\omega^2\partial_\theta^2+2\omega\rho\partial_\theta\partial_\rho+\rho^2\partial_\rho^2+\rho\partial_\rho$, gives \eqref{eq:ssw}.
\end{proof}

\begin{definition}\label{def:profile}
A \emph{DSS profile of frequency $\omega$} is a real-analytic function $W$ on $\R\times(-\rho_1,\rho_1)$, for some $\rho_1>1$, which is $2\pi$-periodic in $\theta$, even in $\rho$, not identically zero, and solves \eqref{eq:ssw} for $0<\rho<\rho_1$.
\end{definition}

\begin{remark}\label{rem:noCSS}
The lateral boundary $\{|x|=T-t\}$ of the cone $\K_T$ is characteristic for the wave equation, and no characteristic of \eqref{eq:nlw} enters $\K_T$ through it. In the variables \eqref{eq:ssvar} this boundary is $\rho=1$, which is therefore characteristic for \eqref{eq:ssw}, and no boundary condition is imposed there. For the linear part of \eqref{eq:ssw} acting on $e^{in\theta}V(\rho)$, the Frobenius exponents at $\rho=1$ are $0$ and $s_c-\frac12-in\omega$ (Lemma~\ref{lem:hyp}). For $n\ne0$, the latter exponent gives a nonsmooth local branch, which must be absent in a smooth profile. For $n=0$, smoothness excludes the nonsmooth power or logarithmic terms, with logarithmic terms arising when $s_c-\frac12$ is an integer.

Note also that, for \eqref{eq:nlw} with critical exponent $s_c\le\frac32$, smooth $\tau$-independent profiles vanish identically, by the maximum principle (see also \cite[Proposition~2.2]{Ta}, which covers systems, with the strict inequality $s_c<\frac{3}{2}$). For the cases of \eqref{eq:cases} we were not able to find such profiles numerically either. This is one reason to look for time-periodic profiles. On the other hand, $\tau$-independent profiles which take the singular branch $(1-\rho)^{s_c-\frac12}$ at the light cone (with a logarithmic factor when $s_c-\frac12$ is an integer) exist for every energy-supercritical exponent, and they are positive and small: see Appendix~\ref{app:rough}, where they are used to construct rough solutions with finite-energy data whose supremum becomes infinite in finite time.
\end{remark}

\subsection{Blow-up from a profile} The following proposition, on the existence of DSS profiles, is the main result of this paper; Theorem~\ref{thm:main} follows from it by standard arguments.

\begin{proposition}\label{prop:profiles}
Let $(d,p)$ be one of the pairs in \eqref{eq:cases}. There exist $\omega^*>0$ satisfying \eqref{eq:omega} and a DSS profile $W^*$ of frequency $\omega^*$, real-analytic on $\R\times(-\rho_*,\rho_*)$ with $\rho_*=1.0011$ as in Theorem~\ref{thm:main}, such that $W^*(\theta+\pi,\rho)=-W^*(\theta,\rho)$ and
\begin{equation}\label{eq:profilebound}
\max_{0\le\rho\le1}|W^*(\theta,\rho)|\ge 1\qquad\text{for all }\theta\in\R.
\end{equation}
\end{proposition}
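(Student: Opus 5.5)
The plan is to recast \eqref{eq:ssw} as a zero-finding problem $F(U)=0$, $U=(\omega,W)$, on a weighted $\ell^1$ Banach algebra of Fourier--Chebyshev coefficients, and to close it by a Newton--Kantorovich argument around a numerical approximate zero. Concretely, write
\[
W(\theta,\rho)=\sum_{n\text{ odd}}\sum_{k\ge0}W_{n,k}\,T_k(2\rho^2-1)e^{in\theta}.
\]
Restricting to odd $n$ builds in the antisymmetry $W(\theta+\pi)=-W(\theta)$, which is preserved because $p$ is odd. Using $2\rho^2-1$ builds in evenness in $\rho$, and reality is imposed through $W_{-n,k}=\overline{W_{n,k}}$. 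The norm is $\sum|W_{n,k}|\,\nu^k e^{\mu|n|}$ with $\nu>1$. Finiteness of this norm gives analyticity in a strip in $\theta$ and on the Bernstein ellipse $E_\nu$ in the variable $2\rho^2-1$. We choose $\nu$ so that $E_\nu$ contains the real interval $2\rho^2-1\le 2\rho_*^2-1$. This is what yields real-analyticity on $\R\times(-\rho_*,\rho_*)$ and, in particular, across the cone. We append one phase condition, for instance $\operatorname{Im}\sum_k W_{1,k}=0$, to remove the $\theta$-translation invariance, and this balances the extra unknown $\omega$. The equation then reads $\Lambda_n(\omega)W_n+(W^p)_n=0$ for each odd $n$, where $W^p$ is the algebra product.

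The core step is to verify the Newton--Kantorovich hypotheses in radii-polynomial form. Let $\bar U$ be the numerical profile, truncated to $|n|\le K_1$ and $k\le N$. The approximate inverse $A$ is built in two pieces. On the finite block it is a numerical inverse of $DF(\bar U)$, with $\Lambda_n(\bar\omega)^{-1}$ used for the far Chebyshev columns. On the high modes $|n|>K_1$ it is exactly $\Lambda_n(\bar\omega)^{-1}$. We then need explicit bounds on four quantities:
\[
Y_0\ge\|AF(\bar U)\|,\qquad Z_1\ge\|I-A\,DF(\bar U)\|,\qquad Z_2(r)\ge\sup_{\|U-\bar U\|\le r}\|A(DF(U)-DF(\bar U))\|,
\]
together with a bound on $\|A\|$. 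Existence of a unique zero in the ball of radius $r$ follows once $Y_0+(Z_1+Z_2(r))r<r$ and $Z_1+Z_2(r)<1$.

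These bounds split into two kinds. The quantities $Y_0$ and the finite-block part of $Z_1$ are finite computations carried out in interval arithmetic. The tail part of $Z_1$ involves terms of the form $\Lambda_n^{-1}(p\bar W^{p-1}\cdot)$ for $|n|>K_1$ and $\partial_\omega\Lambda_n\Lambda_n^{-1}$, and it is controlled by the uniform estimate
\[
\|\Lambda_n^{-1}\|_{\ell^1_\nu\to\ell^1_\nu}\le C(n\omega)^{-2}.
\]
I expect this uniform estimate to be the main obstacle. My first attempt would exploit the hypergeometric structure in $z=\rho^2$, which makes $\Lambda_n$ upper triangular in the Chebyshev basis. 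Each column of $\Lambda_n^{-1}$ is then given by a backward $2\times2$ transfer-matrix recursion. Exclusion of the singular branch $(1-\rho)^{s_c-\frac12-in\omega}$ is encoded automatically by requiring finite $\ell^1_\nu$ norm. The products of transfer matrices would be bounded in an adapted norm on $\C^2$, in which the limiting matrices, which share the eigenvalue $-1$, have norm at most $\frac{1+\nu}{2}<\nu$. The weight $\nu^k$ then absorbs these products geometrically, and the factor $(n\omega)^{-2}$ comes from the leading $\omega^2n^2$ diagonal. The bound $Z_2$ is routine, since the algebra property gives polynomial bounds in $r$.

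Finally, from the certified zero $(\omega^*,W^*)$ with $\|W^*-\bar W\|\le r$ we read off the remaining conclusions. The estimate $|\omega^*-\bar\omega|\le r\le10^{-10}$ gives \eqref{eq:omega}. Real-analyticity, evenness, periodicity and antisymmetry hold by construction. The sup-norm on $[0,1]$ is dominated by the $\ell^1$ norm, so $|W^*-\bar W|\le r$ pointwise. The lower bound \eqref{eq:profilebound} then follows from a rigorous check that $\max_{\rho\in[0,1]}|\bar W(\theta,\rho)|\ge1+r$ for all $\theta$. This check can be done on a $\theta$-grid, using the Lipschitz bound $\|\partial_\theta\bar W\|_\infty$ to cover the points between grid nodes. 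The same bound also shows that $W^*$ is not identically zero.
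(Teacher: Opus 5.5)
Your architecture matches the paper's: the odd-mode Fourier--Chebyshev $\ell^1$ algebra with a phase condition, Newton--Kantorovich with $A=A_N+\Lambda(\bar\omega)^{-1}\pi^{>N}$, the triangular $2\times2$ recursion with an adapted norm on $\C^2$, and a grid-plus-Lipschitz check of \eqref{eq:profilebound}. The gap is that the linear estimate on which all the tail bounds rest covers only part of the tail.

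\textbf{The missing columns.} The adapted-norm bound \eqref{eq:contraction} is available for every $j\ge1$ only when $|n|\bar\omega$ is large. This is what forces $K_1\ge163$ for $(9,3)$ at $\nu=1.1$. For a fixed moderate $|n|$, the bound is available only for $j$ beyond roughly $8(d-1)/(\nu-1)^2$, i.e.\ $j\ge6661$ in that case.

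Your $A$ also uses $\Lambda_n^{-1}$ on the far Chebyshev columns $k>N$ of the modes $|n|\le K_1$. For such a column, the backward recursion from $j=k-1$ down to $j=0$ crosses the range where the contraction fails. That range includes the turning point $j\approx\frac12|n|\bar\omega$, where two eigenvalues of modulus one collide. There the geometric absorption by $\nu^k$ is unavailable, and your proposal gives no bound for these columns.

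This is not a detail. Since $\|p\bar W^{p-1}\|_{\ell^1}$ is of order $10^5$, you need a rigorous bound of order $10^{-6}$ on every column of $\Lambda^{-1}\pi^{>N}$. The paper supplies the missing step in two pieces.
\begin{enumerate}[(i)]
\item For $|n_1|\le K_1$ and $n_2\le K_2$, it encloses the columns through verified inverses of the triangular sections of order $K_2$ of each $\Lambda_{n_1}$ separately. This is cheap and decoupled from the dense block, which is what lets the dense block stop at $N_1=65$ while $K_1=165$.
\item For $n_2>K_2$, it uses that the rows $j\le K_2-2$ of any column are obtained from $\zeta_{K_2-2}$ by a map independent of the column. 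They are therefore a combination of the two enclosed columns $K_2-1$ and $K_2$, with coefficients controlled by the contraction for $j\ge K_2-1$ (Lemma~\ref{lem:fixedn}).
\end{enumerate}

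\textbf{The side on which you put $A$.} With $Y_0\ge\|AF(\bar U)\|$ and $Z_1\ge\|I-A\,DF(\bar U)\|$, the upper triangular structure works against you. The columns $k>N_2$ of $\Lambda$ have nonzero entries $-8k\mu_j\gamma_n$, $-8k\mu_j\gamma_n'$ in the rows $j\le N_2$. Writing $M:=M_{p\bar W^{p-1}}$ and ignoring the $\omega$-row and column, one finds
\[
I-A\,DF(\bar U)=\pi^{\le N}-A_N(\Lambda+M)\pi^{\le N}-A_NM\pi^{>N}+\big(\pi^{\le N}\Lambda^{-1}\pi^{\le N}-A_N\big)\Lambda\pi^{>N}-\Lambda^{-1}\pi^{>N}M .
\]
The fourth term is missing from your list. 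It has no a priori reason to be small, since $A_N$ approximates the inverse of $\Lambda+M$ on the block, not of $\Lambda$. You would also need injectivity of $A$ to pass from $AF(U)=0$ to $F(U)=0$.

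The paper puts $A$ on the right, looking for a fixed point of $\Psi\mapsto\Psi-F(\bar U+A\Psi)$. Then $\Lambda\pi^{\le N}=\pi^{\le N}\Lambda\pi^{\le N}$ and $\Lambda\Lambda^{-1}\pi^{>N}=\pi^{>N}$ remove all such cross terms. The tail then contributes only $M\Lambda^{-1}\pi^{>N}$ and the phase functional. Even that is tight: the product bound \eqref{eq:Z12product} gives only $0.85$ to $0.92$. The columns near the turning points must be computed directly (Section~\ref{sec:exact}) to reach $Z_1\le0.8$.
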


\begin{proof}[Proof of Theorem~\ref{thm:main}] This follows by well-known arguments, so we will be brief. The function $u^*$ defined in \eqref{eq:ustar} is real-analytic on the neighborhood $\{|x|<\rho_*(T-t)\}$ of $\K_T$, and solves \eqref{eq:nlw} there, since the computation in Lemma~\ref{lem:ss} is local. This proves (i).

(ii) Let $\varphi\in C_c^\infty([0,\rho_*))$ with $\varphi=1$ on $[0,1]$, and
\[
u_0(x):=\varphi(|x|/T)\,u^*(0,x),\qquad u_1(x):=\varphi(|x|/T)\,\partial_tu^*(0,x).
\]
These are radial, belong to $C_c^\infty(\R^{d})$, and coincide with $(u^*,\partial_tu^*)(0,\cdot)$ on $\{|x|\le T\}$.

Since the nonlinearity is a polynomial, the energy method in $H^s\times H^{s-1}$, $s>\frac d2+1$ (see e.g.\ \cite{So,SSbook}), gives a unique maximal smooth solution $u\in C^\infty([0,T^*)\times\R^{d})$. Moreover, by Gronwall's inequality, the solution can be continued as long as it stays bounded: if $T^*<\infty$ then $\limsup_{t\uparrow T^*}\|u(t)\|_{L^\infty}=\infty$. Standard energy estimates using finite speed of propagation show that $u=u^*$ in $\K_{T}\cap\{t<T^\ast\}$. In particular, $T^*\le T$ since $u^*$ is not bounded in $\K_T$. 

We now show that $T^*=T$, and that $u$ is bounded away from the origin. For $t<T^*$ the function $u(t)$ is smooth and, by finite speed of propagation, compactly supported, so the energy $E(u)$ is conserved; writing $E_0$ for its value, $\|\nabla u(t)\|_{L^2(\R^{d})}^2\le2E_0$ for all $t<T^*$. Since $u(t)$ is radial, for $x\neq0$ we have $u(t,x)=-\int_{|x|}^\infty\partial_ru(t,s\,x/|x|)\,ds$, thus
\begin{equation}\label{eq:radialbound}
|u(t,x)|\le\Big(\int_{|x|}^\infty s^{1-d}\,ds\Big)^{1/2}\Big(\int_{|x|}^\infty\big|\partial_ru(t,s\,x/|x|)\big|^2s^{d-1}\,ds\Big)^{1/2}\lesssim E_0^{1/2}|x|^{-\frac{d-2}2}
\end{equation}
for all $t<T^*$. Therefore $u$ is bounded on $[0,T^*)\times\{|x|\ge\eps\}$ for every $\eps>0$. In particular, if $T^\ast<T$ then $u$ is bounded on $[0,T^*)\times\R^{d}$ (since $u=u^*$ is bounded on $\K_T\cap\{t\le T^*\}$), which contradicts the continuation criterion discussed earlier. Hence $T^*=T$, and $u=u^*$ on $\K_T$. Moreover, a finite speed of propagation argument and the bounds \eqref{eq:radialbound} show that the solution extends smoothly to $([0,T]\times\R^{d})\setminus\{(T,0)\}$. Since $u=u^*$ on $\K_T$, the lower bound in (ii) follows from \eqref{eq:profilebound}, and the conclusions concerning the scale-invariant Lebesgue norms and the energy follow from \eqref{eq:ustar} by the change of variables $x=(T-t)y$. This proves (ii).

(iii) The function $\tilde u(t,x):=u(t,x')$ is a smooth solution of \eqref{eq:nlw} on $[0,T)\times\R^{d'}$, since $\Delta_x$ acts on it as $\Delta_{x'}$, and its data coincide with $(\chi u_0,\chi u_1)$ on $\{|x''|\le2T\}$. The existence of the maximal smooth solution $v$, with lifespan $T'$, and the continuation criterion hold in $\R^{d'}$ exactly as in $\R^{d}$. By finite speed of propagation, $v=\tilde u$ on $\{|x''|\le2T-t\}$ for $t<\min\{T,T'\}$, since the backward light cone with vertex at a point of this set meets $\{t=0\}$ inside $\{|x''|\le2T\}$. If $T'>T$, then $v$ would be continuous, hence bounded, on the compact set $[0,T]\times\{|x'|\le T\}\times\{|x''|\le T\}$, on which $v=\tilde u$ for $t<T$; but $\tilde u$ is unbounded there, since $u=u^*$ is unbounded on $\K_T$. Hence $T'\le T$, so that $v=\tilde u$ on $\{|x''|\le2T-t\}$ for all $t<T'$, and $\limsup_{t\uparrow T'}\|v(t)\|_{L^\infty(\R^{d'})}=\infty$ by the continuation criterion.
\end{proof}

The sign of the nonlinearity is used only to prove \eqref{eq:radialbound} through the coercivity of the energy. The rest of the paper is devoted to the proof of Proposition~\ref{prop:profiles}.

\section{From rotating to discretely self-similar profiles}\label{sec:origin}

This section explains how the approximate profiles were found. Since we were not able to find smooth real self-similar profiles, independent of $\tau$, we pursue instead the next simplest objects, namely the discretely self-similar (DSS) solutions, which are periodic in $\tau$. We describe a route from the rotating self-similar solutions of the complex equation, which are comparatively easy to find, to real DSS profiles, and show how the symmetries force the linearization to degenerate where the numerical branch reaches real profiles.

\subsection{Rotating profiles and symmetry-breaking}\label{sec:rot} For complex-valued solutions of $\partial_t^2u-\Delta u+|u|^{p-1}u=0$, the $U(1)$ symmetry allows \emph{rotating self-similar} solutions
\[
u(t,x)=(T-t)^{-\alpha-i\omega}\,Q\Big(\frac{|x|}{T-t}\Big),
\]
where $Q$, which is smooth across $\rho=1$, solves an ordinary differential equation. In self-similar variables, with $\theta=\omega\tau$ and $u=u_1+iu_2$, the profile of such a solution is
\[
(W_1,W_2)(\theta,\rho)=\big(\mathrm{Re}(e^{i\theta}Q(\rho)),\ \mathrm{Im}(e^{i\theta}Q(\rho))\big).
\]
It is $2\pi$-periodic and odd under $\theta\mapsto\theta+\pi$, and it satisfies $W_1=\partial_\theta W_2$ and $W_2=-\partial_\theta W_1$. Thus a rotating profile is a DSS profile of a two-component system, of a special kind: it contains only the Fourier modes $\pm1$ in $\theta$, and its time dependence is generated by the symmetry.

To connect rotating profiles with the real equation, consider the family of systems
\begin{equation}\label{eq:system}
\partial_t^2u_j-\Delta u_j+\partial_{u_j}F_\eps(u_1,u_2)=0,\quad j=1,2,\qquad F_\eps(u_1,u_2):=\frac{(u_1^2+\eps u_2^2)^{\frac{p+1}2}}{p+1},\qquad\eps>0 .
\end{equation}
For $\eps=1$ this is the complex equation. For every $\eps$, the subspace $\{u_2=0\}$ is invariant and carries the real equation, and so is $\{u_1=0\}$, where $u_2=\eps^{-\frac{p+1}{2(p-1)}}v$ with $v$ a solution of the real equation. The system is invariant under $u_1\mapsto-u_1$, and under the exchange $(u_1,u_2,\eps)\mapsto(u_2,u_1,\eps^{-1})$ combined with the scaling $u\mapsto\eps^{\frac{p+1}{2(p-1)}}u$. For $\eps\neq1$ the $U(1)$ symmetry is broken. We numerically continued the rotating profile as a DSS profile of \eqref{eq:system}, allowing all odd harmonics in $\theta$. Following the computed branch in both directions from $\eps=1$ suggests that it reaches the invariant subspaces at $\eps=1/p$ and $\eps=p$, through pitchfork bifurcations, with real DSS profiles as endpoints; see Figure~\ref{fig:bridge} (a), (b). These continuation and bifurcation claims are numerical observations.

\subsection{Symmetry-predicted transverse degeneracies at $\eps=1/p$ and $\eps=p$}
The symmetries force transverse degeneracies at these values. For the dimension $d$ and odd power $p$ under consideration, let $\mathcal P_{\omega,d,p}$ denote the linear part of the scalar profile equation \eqref{eq:ssw}, so that \eqref{eq:ssw} reads $\mathcal P_{\omega,d,p}W+W^p=0$. Let $V$ be a nonstationary real DSS profile satisfying $\mathcal P_{\omega,d,p}V+V^p=0$, and put $\mathcal L_V:=\mathcal P_{\omega,d,p}+pV^{p-1}$, with $\omega$ fixed. Translation invariance in $\theta$ gives
\[
\mathcal L_V\,\partial_\theta V=0 .
\]
Consider now the pure profiles $(W_1,W_2)=(0,\eps^{-\frac{p+1}{2(p-1)}}V)$ of \eqref{eq:system}. At these profiles the linearization in the direction of $W_1$ decouples from the rest, and since $\partial_{u_1}^2F_\eps(0,u_2)=\eps^{\frac{p-1}2}u_2^{p-1}$, it is $\mathcal P_{\omega,d,p}+\eps^{-1}V^{p-1}$. For $\eps=1/p$ it coincides with $\mathcal L_V$, and is therefore singular, with $\partial_\theta V$ in its kernel. This identifies $\eps=1/p$ as a candidate bifurcation point. Under suitable simple-kernel, transversality and smoothness hypotheses, a symmetry-compatible local branch can be parametrized in the form
\[
(W_1,W_2)=\Big(\delta\,\partial_\theta V+O(\delta^2),\ \eps^{-\frac{p+1}{2(p-1)}}V+O(\delta^2)\Big),\qquad\eps=\frac1p+O(\delta^2),
\]
where the expansion of $\eps$ is even in $\delta$ because of the symmetry $u_1\mapsto-u_1$. The leading transverse direction is proportional to the $\theta$-derivative of the pure profile, as suggested by the rotating-profile relations at $\eps=1$. On the other pure branch $(V,0)$, the transverse linearization is $\mathcal P_{\omega,d,p}+\eps V^{p-1}$, so the same degeneracy occurs at $\eps=p$, consistently with the exchange symmetry.

\begin{figure}[t]
\centering
\IfFileExists{fig_bridge.pdf}{\includegraphics[width=\textwidth]{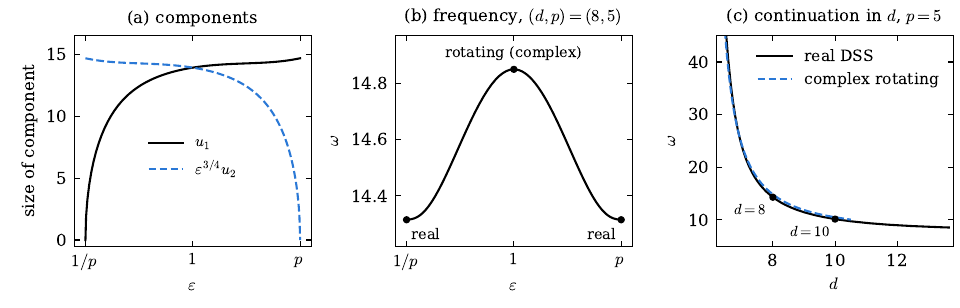}}{\fbox{\texttt{fig\_bridge.pdf}}}
\caption{Numerical continuation from rotating to real profiles. (a) Sizes (Euclidean norms of the coefficient arrays) of the two components along the branch of DSS profiles of \eqref{eq:system} for $(d,p)=(8,5)$, from the real profile at $\eps=1/p$, where $u_1=0$, through the rotating profile at $\eps=1$, to the real profile at $\eps=p$, where $u_2=0$. The second component is rescaled by $\eps^{\frac{p+1}{2(p-1)}}=\eps^{3/4}$. (b) The frequency along the same branch. (c) The frequencies of the real DSS family and of the rotating family under continuation in $d$, at $p=5$. The dots mark the real profiles at $d=8$ and $d=10$. Panels (a) and (b) were computed with $6$ odd harmonics and $40$ Chebyshev modes.}
\label{fig:bridge}
\end{figure}

\subsection{The profiles of Proposition~\ref{prop:profiles}}\label{sec:profiles}
We started from the rotating profile with $(d,p)=(8,5)$ and $\omega=14.8503$, computed from the ordinary differential equation for $Q$, and followed the branch of \eqref{eq:system} with $6$ odd harmonics and $40$ Chebyshev modes. The endpoint $\eps=p$, refined with $16$ harmonics and $64$ Chebyshev modes, is a numerical approximation to a real DSS profile for $(d,p)=(8,5)$ with $\omega=14.3108$. We checked it independently by time evolution in self-similar variables over one period. Continuation in $d$ at $p=5$ then gives profiles for other dimensions, for instance at $d=10$ and, at $d=7$, the profile $(7,5)$ of Proposition~\ref{prop:profiles}; see Figure~\ref{fig:bridge}(c). Along the way, the real and the rotating families stay close, and the computed frequencies grow rapidly as $d$ decreases below about $6.5$. Continuation in $p$ at fixed $d$, followed by continuation in $d$, gives the profile $(6,7)$ in the same way. 

The rotating profiles of the complex equation come in several families, which, for a given frequency $\omega$, differ by the slope at the light cone of the phase function $S$ of the WKB description $Q(\rho)\approx\omega^{\alpha}A(\rho)e^{i\omega S(\rho)}$, valid for large $\omega$. Each family can be continued to the real equation by the method of Section~\ref{sec:rot}. The profile $(9,3)$ of Proposition~\ref{prop:profiles}, with $\omega\approx40.26$, comes from the second family, found at $p=5$ by a Newton homotopy from the first family, continued in $p$ down to $p=3$ and in $d$ down to $d=9$, and brought to the real equation at $d=9$ by the continuation of Section~\ref{sec:rot}. The first family also reaches $p=3$ at $d=9$, but with $\omega\approx115$; at $d=10$ its frequency, $\omega\approx42.8$, is comparable to that of the second family. Figure~\ref{fig:profile} shows the three profiles. As the dimension decreases towards the critical dimension of each exponent, the frequencies grow and the profiles develop, in each Fourier mode $n$, a WKB oscillation $e^{\pm in\omega S(\rho)}$ which concentrates near the light cone: for $(9,3)$ the Chebyshev coefficients of the mode $n$ form a plateau up to the index $n_2\approx11n$ before decaying. This structure dictates the sizes of the approximate solutions in Table~\ref{tab:proof}, and it is also what limits the method for even smaller dimensions (Remark~\ref{rem:cost}).

The candidates obtained by continuation are accurate to a few digits only (the frequency of the $(9,3)$ candidate is $40.26024$, for instance), and have the (smaller) truncation sizes of the continuation rather than the sizes $N_W$ of Table~\ref{tab:proof} used in the proofs. They were extended to the sizes $N_W$ and refined by Newton's method on the Galerkin system $\pi^{\le N_W}F(U)=0$ (in the notation of Section~\ref{sec:modes}), in double-double arithmetic (about $32$ significant digits). These refined approximate solutions are the $\bar U$ of Section~\ref{sec:result}.

\section{Spectral formulation and the fixed-point problem}\label{sec:modes}

\subsection{Fourier decomposition and hypergeometric structure}
We write $W$ in terms of its complex-valued Fourier coefficients $V_n(\rho)$, $n\in2\Z+1$, as 
\begin{equation}\label{eq:fourier}
W(\theta,\rho)=\sum_{n\in2\Z+1}V_n(\rho)e^{in\theta},\qquad V_{-n}=\ov{V_n}.
\end{equation}
With this ansatz, \eqref{eq:ssw} is equivalent to the system
\begin{equation}\label{eq:modes}
L_nV_n+[W^p]_n=0,\qquad n\in2\N-1,
\end{equation}
where $2\N-1:=\{1,3,5,\dots\}$ is the set of odd natural numbers, $[f]_n:=\frac1{2\pi}\int_0^{2\pi}f(\theta)e^{-in\theta}\,d\theta$, and
\begin{equation}\label{eq:Ln}
L_n:=-n^2\omega^2+(2\alpha+1)in\omega+\alpha(\alpha+1)+\Big(2in\omega\rho-\frac{d-1}\rho+2(\alpha+1)\rho\Big)\partial_\rho-(1-\rho^2)\partial_\rho^2 .
\end{equation}
Since $W^p$ is real and odd under $\theta\mapsto\theta+\pi$, the equations for even $n$ are trivially satisfied, and those for negative $n$ are the complex conjugates of those for positive $n$. Throughout, we write
\begin{equation}\label{eq:sab}
s_n:=n\omega,\qquad a_n:=\frac{\alpha+is_n}2,\qquad b_n:=\frac{\alpha+1+is_n}2 .
\end{equation}

\begin{lemma}\label{lem:hyp}
Let $V(\rho)=\tilde V(\rho^2)$. Then $(L_nV)(\rho)=-4(H_n\tilde V)(\rho^2)$, where
\begin{equation}\label{eq:Hn}
H_n:=z(1-z)\partial_z^2+\Big(\frac d2-(a_n+b_n+1)z\Big)\partial_z-a_nb_n  \quad \text{ and } z:=\rho^2.
\end{equation}
\end{lemma}

\begin{proof}
With $z=\rho^2$ we have $\partial_\rho=2\rho\partial_z$ and $\partial_\rho^2=2\partial_z+4z\partial_z^2$. Hence $-(1-\rho^2)\partial_\rho^2=-4z(1-z)\partial_z^2-2(1-z)\partial_z$, and the first-order part of $L_n$ equals $(4in\omega z-2(d-1)+4(\alpha+1)z)\partial_z$. Summing up, the coefficient of $\partial_z$ is $-2(1-z)+4in\omega z-2(d-1)+4(\alpha+1)z=-2d+4(\alpha+\frac32+in\omega)z=-4\big(\frac d2-(a_n+b_n+1)z\big)$, since $a_n+b_n+1=\alpha+\frac32+in\omega$. Finally $4a_nb_n=(\alpha+in\omega)(\alpha+1+in\omega)=\alpha(\alpha+1)+(2\alpha+1)in\omega-n^2\omega^2$.
\end{proof}

Note that $H_n$ is a hypergeometric operator with exponents $\{0,1-\frac d2\}$ at $z=0$, $\{0,s_c-\frac12-is_n\}$ at $z=1$ and $\{a_n,b_n\}$ at $z=\infty$. The exponent $s_c-\frac12-is_n$ corresponds to the singular branch $(1-\rho)^{s_c-\frac12-in\omega}$ of Remark~\ref{rem:noCSS}.

\subsection{Chebyshev coefficients}

In the previous subsection we wrote an infinite system of ordinary differential equations for the Fourier coefficients $V_n(\rho)$, $n\in2\Z+1$. We now expand each $V_n$ in Chebyshev polynomials $T_k(2\rho^2-1)$, $k\ge0$, and write the system as an infinite system of algebraic equations for the Fourier--Chebyshev coefficients. The resulting matrix has an upper triangular structure, which we exploit to obtain uniform bounds on its inverse. First we introduce the function spaces.

For $\nu>1$ let $\ell^1_\nu$ denote the space of complex sequences $v=(v_k)_{k\ge0}$ with $\|v\|_\nu:=\sum_k\nu^k|v_k|<\infty$. To $v\in\ell^1_\nu$ we associate the function
\begin{equation}\label{eq:cheb}
V(\rho):=\sum_{k\ge0}v_kT_k(2\rho^2-1),
\end{equation}
where $T_k$ denotes the Chebyshev polynomial of degree $k$, $T_k(\cos\phi)=\cos k\phi$. The series converges absolutely and locally uniformly, together with all its derivatives, for $x=2\rho^2-1$ in the open Bernstein ellipse $\Ell_\nu:=\{\frac12(\zeta+\zeta^{-1}):1\le|\zeta|<\nu\}$ with foci $\pm1$; see \cite{Tr}. Since $\Ell_\nu\cap\R=(-\frac{\nu+\nu^{-1}}2,\frac{\nu+\nu^{-1}}2)$, $V$ is an even real-analytic function on $(-\rho_\nu,\rho_\nu)$ with
\begin{equation}\label{eq:rhonu}
\rho_\nu:=\tfrac12\big(\nu^{1/2}+\nu^{-1/2}\big)>1 .
\end{equation}
For $\nu=1.1$ one has $\rho_\nu>1.0011$. We write $\Lambda_n$ for the (infinite) matrix of $L_n$ acting on Chebyshev coefficients, so that $L_nV=\sum_j(\Lambda_nv)_jT_j(2\rho^2-1)$, and we set
\begin{equation}\label{eq:gam}
\begin{split}
&D_k:=-(k+a_n)(k+b_n),\\
&\gamma_n:=-(a_n+b_n)=-\alpha-\tfrac12-is_n,\qquad\gamma_n':=d-1-(a_n+b_n)=d-\alpha-\tfrac32-is_n.
\end{split}
\end{equation}

\begin{lemma}\label{lem:struct}
For every $k\ge0$,
\begin{equation}\label{eq:HT}
H_nT_k=D_kT_k+2k\Big(\gamma_n{\sum_{j<k,\ k-j\ \mathrm{even}}}'T_j+\gamma_n'{\sum_{j<k,\ k-j\ \mathrm{odd}}}'T_j\Big),
\end{equation}
where $T_k=T_k(x)$, $x=2z-1$, and the prime indicates that a term with $j=0$ carries the factor $\frac12$. Consequently $\Lambda_n=-4H_n$ is upper triangular, with
\[
(\Lambda_n)_{kk}=-4D_k,\qquad(\Lambda_n)_{jk}=\begin{cases}-8k\mu_j\gamma_n,& j<k,\ k-j\ \text{even},\\ -8k\mu_j\gamma_n',& j<k,\ k-j\ \text{odd},\end{cases}
\]
where $\mu_0:=\frac12$ and $\mu_j:=1$ for $j\ge1$. Moreover $|D_k|\ge k^2+\frac{s_n^2}4$.
\end{lemma}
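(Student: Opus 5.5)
The plan is to compute $H_nT_k$ directly in the variable $x=2z-1\in[-1,1]$, using the Chebyshev differential equation and the classical expansions of $T_k'$ in lower Chebyshev polynomials. With $z=\frac{1+x}2$ we have $\partial_z=2\partial_x$, $z(1-z)=\frac{1-x^2}4$, so
\[
H_n=(1-x^2)\partial_x^2+\big(d-(a_n+b_n+1)(1+x)\big)\partial_x-a_nb_n .
\]
The Chebyshev equation $(1-x^2)T_k''=xT_k'-k^2T_k$ eliminates the second derivative:
\[
H_nT_k=-k^2T_k-a_nb_nT_k+\big(x(1-(a_n+b_n+1))\big)T_k'+\big(d-(a_n+b_n+1)\big)T_k'
=-(k^2+a_nb_n)T_k-(a_n+b_n)\,xT_k'+(d-1-(a_n+b_n))T_k'.
\]
So $H_nT_k=-(k^2+a_nb_n)T_k+\gamma_n\,xT_k'+\gamma_n'\,T_k'$ with $\gamma_n,\gamma_n'$ as in \eqref{eq:gam}.

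Next I use the two standard identities
\[
T_k'=2k{\sum_{j<k,\ k-j\ \mathrm{odd}}}'T_j,\qquad xT_k'=kT_k+2k{\sum_{j<k,\ k-j\ \mathrm{even}}}'T_j.
\]
The first is classical: it follows from $T_k'=kU_{k-1}$ and $U_{m}=2\sum' T_j$ over $j\equiv m$ mod 2. For the second, write $xU_{k-1}=\frac12(U_k+U_{k-2})$ and expand, or observe that $xT_k'-kT_k$ is a polynomial of degree $<k$ with the same parity as $k$ and check coefficients via $T_k'(\cos\phi)=k\sin k\phi/\sin\phi$. Substituting gives the diagonal coefficient $-(k^2+a_nb_n)+k\gamma_n=-(k^2+(a_n+b_n)k+a_nb_n)=-(k+a_n)(k+b_n)=D_k$, and the off-diagonal terms exactly as in \eqref{eq:HT}. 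The matrix description of $\Lambda_n=-4H_n$ then follows from Lemma~\ref{lem:hyp} by reading off coefficients, with the factor $\mu_j$ accounting for the prime. Upper triangularity is immediate since only $j\le k$ appear.

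For the lower bound, write $k+a_n=(k+\frac\alpha2)+\frac{is_n}2$ and $k+b_n=(k+\frac{\alpha+1}2)+\frac{is_n}2$. Each has modulus squared at least $k^2+\frac{s_n^2}4$ since $\alpha>0$, so $|D_k|\ge k^2+\frac{s_n^2}4$.

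The only step needing real care is the $xT_k'$ identity with the correct $\frac12$ weight at $j=0$. I will verify it cleanly through the trigonometric form
\[
x\,\frac{k\sin k\phi}{\sin\phi}-k\cos k\phi=k\,\frac{\cos\phi\sin k\phi-\sin\phi\cos k\phi}{\sin\phi}=k\,\frac{\sin(k-1)\phi}{\sin\phi}=kU_{k-2}.
\]
Then $U_{k-2}=2{\sum}'_{j\le k-2,\ k-j\ \mathrm{even}}T_j$ yields the claim.
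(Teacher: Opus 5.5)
Your proposal is correct and follows the paper's proof essentially verbatim: rewrite $H_n$ in the variable $x=2z-1$, eliminate $T_k''$ with the Chebyshev equation, substitute the classical expansions of $T_k'$ and $xT_k'$, read off $D_k=-(k+a_n)(k+b_n)$ on the diagonal, and bound $|k+a_n|$ and $|k+b_n|$ separately. Your trigonometric check $xT_k'-kT_k=kU_{k-2}$, which confirms the second identity including the $\frac12$ weight at $j=0$, supplies a detail that the paper only cites as classical.
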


\begin{proof}
In the variable $x$, $H_n=(1-x^2)\partial_x^2+(d-(a_n+b_n+1)(1+x))\partial_x-a_nb_n$. We use the Chebyshev equation $(1-x^2)T_k''=xT_k'-k^2T_k$ and the classical identities
\[
T_k'=2k{\sum_{j<k,\ k-j\ \mathrm{odd}}}'T_j,\qquad xT_k'=kT_k+2k{\sum_{j<k,\ k-j\ \mathrm{even}}}'T_j .
\]
They give $H_nT_k=-(k^2+(a_n+b_n)k+a_nb_n)T_k-(a_n+b_n)\,2k\sum'_{\rm even}T_j+(d-1-a_n-b_n)\,2k\sum'_{\rm odd}T_j$, which is \eqref{eq:HT}. The lower bound follows from $|k+a_n|^2=(k+\frac\alpha2)^2+\frac{s_n^2}4$ and $|k+b_n|^2=(k+\frac{\alpha+1}2)^2+\frac{s_n^2}4$.
\end{proof}

Thus, above the diagonal, $\Lambda_n$ coincides with the rank-two matrix
\[
\Big(-8k\mu_j\Big(\frac{\gamma_n+\gamma_n'}2+\frac{\gamma_n-\gamma_n'}2(-1)^{j}(-1)^{k}\Big)\Big)_{j,k}.
\]
By Lemma~\ref{lem:struct}, $\Lambda_n$ is an upper triangular matrix with nonzero diagonal entries. Consequently, each of its sections of finite order is invertible, and $\Lambda_n$ possesses an upper triangular inverse $\Lambda_n^{-1}$, whose columns have a finite number of nonzero entries. The boundedness of $\Lambda_n^{-1}$ on $\ell^1_\nu$ will be established in Section~\ref{sec:main}. The next lemma gives a recursive formula for the entries of $\Lambda_n^{-1} = -\frac{1}{4}H_n^{-1}$.

\begin{lemma}\label{lem:rec}
Fix $n$ and $k\ge0$, let $e_k$ denote the $k$-th canonical unit vector, and let $x:=H_n^{-1}e_k$ be the unique solution of $H_nx=e_k$ supported in $\{0,\dots,k\}$. Then $x_k=1/D_k$ and
\begin{equation}\label{eq:rec}
x_j=-\mu_j\frac{\gamma_n\zeta^{\rm e}_j+\gamma_n'\zeta^{\rm o}_j}{D_j},\quad\zeta_{k-1}=\Big(0,\frac{2k}{D_k}\Big)^{\!\top},\quad\zeta_{j-1}=M_j\zeta_j,\quad M_j:=\begin{pmatrix}0&1\\ 1-\frac{2j\gamma_n}{D_j}&-\frac{2j\gamma_n'}{D_j}\end{pmatrix}
\end{equation}
for $0\le j<k$, where $\zeta_j=(\zeta^{\rm e}_j,\zeta^{\rm o}_j)^\top$.
\end{lemma}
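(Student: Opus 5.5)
Since $H_n$ is upper triangular with diagonal $D_k\neq0$ (Lemma~\ref{lem:struct}), the equation $H_nx=e_k$ with $x$ supported in $\{0,\dots,k\}$ is solved uniquely by back substitution. I will write out the back substitution explicitly and show that the partial sums it requires satisfy the stated $2\times2$ recursion.

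Taking row $k$ of $H_nx=e_k$, only the diagonal entry contributes, so $D_kx_k=1$. For $j<k$, row $j$ reads
\[
D_jx_j+2\mu_j\Big(\gamma_n\sum_{m>j,\ m-j\ \mathrm{even}}m\,x_m+\gamma_n'\sum_{m>j,\ m-j\ \mathrm{odd}}m\,x_m\Big)=0,
\]
by \eqref{eq:HT}. Here I read off the $(j,m)$ entry of $H_n$ as $2m\mu_j\gamma_n$ or $2m\mu_j\gamma_n'$ according to the parity of $m-j$. This suggests defining
\[
\zeta^{\rm e}_j:=2\sum_{m>j,\ m-j\ \mathrm{even}}m\,x_m,\qquad \zeta^{\rm o}_j:=2\sum_{m>j,\ m-j\ \mathrm{odd}}m\,x_m,
\]
so that the row equation becomes exactly $x_j=-\mu_j(\gamma_n\zeta^{\rm e}_j+\gamma_n'\zeta^{\rm o}_j)/D_j$.

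It remains to check the recursion for $\zeta$. At $j=k-1$ the even sum is empty, and the odd sum contains only $m=k$, which gives $\zeta_{k-1}=(0,2k/D_k)^\top$. To step from $j$ to $j-1$, note that parity relative to $j-1$ is the opposite of parity relative to $j$, and that the index $m=j$ becomes newly included, with $m-(j-1)=1$ odd. Hence
\[
\zeta^{\rm e}_{j-1}=\zeta^{\rm o}_j,\qquad \zeta^{\rm o}_{j-1}=\zeta^{\rm e}_j+2j\,x_j .
\]
Substituting the formula for $x_j$ gives $\zeta^{\rm o}_{j-1}=\big(1-\tfrac{2j\mu_j\gamma_n}{D_j}\big)\zeta^{\rm e}_j-\tfrac{2j\mu_j\gamma_n'}{D_j}\zeta^{\rm o}_j$. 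For $j\ge1$ we have $\mu_j=1$, and this is exactly $M_j\zeta_j$. The recursion is only needed for $1\le j\le k-1$, since we only compute $\zeta_{j-1}$ for $j-1\ge0$. The case $j=0$ is covered by the $x_j$ formula alone, which carries the factor $\mu_0=\tfrac12$.

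The only delicate point is bookkeeping. One has to confirm from \eqref{eq:HT} that the column index $m$ (not $j$) multiplies the entries, and that $\mu$ attaches to the row $j$. Then $M_j$ carries no $\mu$ factor because it is used only for $j\ge1$. The rest is the routine parity shift above.
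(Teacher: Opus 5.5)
Your proposal is correct and is essentially the paper's proof. It uses the same partial sums $\zeta^{\rm e}_j,\zeta^{\rm o}_j$, the same row-$j$ equation $D_jx_j+\mu_j(\gamma_n\zeta^{\rm e}_j+\gamma_n'\zeta^{\rm o}_j)=0$ read off from Lemma~\ref{lem:struct}, and the same parity shift $\zeta^{\rm e}_{j-1}=\zeta^{\rm o}_j$, $\zeta^{\rm o}_{j-1}=\zeta^{\rm e}_j+2jx_j$; your care with $\mu_j$ is sound but unnecessary at $j=0$, since there the factor $2j$ vanishes and $M_0$ agrees with the sum definitions whatever the value of $\mu_0$.
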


\begin{proof}
Let $\zeta^{\rm e}_j:=\sum_{i>j,\,i-j\ \mathrm{even}}2ix_i$ and $\zeta^{\rm o}_j:=\sum_{i>j,\,i-j\ \mathrm{odd}}2ix_i$. By Lemma~\ref{lem:struct}, row $j<k$ of $H_nx=e_k$ reads $D_jx_j+\mu_j(\gamma_n\zeta^{\rm e}_j+\gamma_n'\zeta^{\rm o}_j)=0$. Moreover $\zeta^{\rm e}_{j-1}=\zeta^{\rm o}_j$ and $\zeta^{\rm o}_{j-1}=\zeta^{\rm e}_j+2jx_j$.
\end{proof}

\begin{remark}[General dimension and nonlinearity]\label{rem:general}
The structure described above and the bounds of Section~\ref{sec:proof} hold for every $d\ge3$ and every odd $p\ge3$ satisfying \eqref{eq:supercritical}, and only involve $d$, $\alpha$ and $\gamma_n'-\gamma_n=d-1$. Only the computer-assisted part of the proof, in Section~\ref{sec:result}, depends on the particular values of $(d,p)$.
\end{remark}

\subsection{Setting}\label{sec:setting} Now that we have derived the algebraic system for the Fourier--Chebyshev coefficients, we introduce the Banach spaces in which we will solve it. The spaces are weighted $\ell^1$ spaces of sequences $V=(V_n)_{n\in\Z\times\N_0}$, $n=(n_1,n_2)$, where $\N_0:=\N\cup\{0\}$, with $n_1$ the Fourier index and $n_2$ the Chebyshev index. The weights are chosen so that the corresponding functions are real-analytic on a complex neighborhood of the interval $[-1,1]$ in $\rho$, and so that the convolution corresponding to multiplication of functions is bounded.

Fix $\nu>1$ and $\nu_\theta>1$. Let $\ell^1_{\nu_\theta,\nu}$ be the space of complex sequences $V=(V_n)_{n\in\Z\times\N_0}$, $n=(n_1,n_2)$, with
\[
\|V\|_{\ell^1}:=\sum_{n \in \Z\times\N_0}\nu_\theta^{|n_1|}\nu^{n_2}|V_n|<\infty.
\]
Moreover, let $\ell^1$ be the closed subspace of $\ell^1_{\nu_\theta,\nu}$ defined as 
\begin{equation*}
  \ell^1 := \{ V \in \ell^1_{\nu_\theta,\nu} : V_n = 0 \text{ for } n_1 \text{ even} \}.
\end{equation*}
 A sequence $W$ represents the function $W(\theta,\rho):=\sum_nW_ne^{in_1\theta}T_{n_2}(2\rho^2-1)$. In the rest of the paper we identify the function and its coefficient representation in $\ell^1.$ We say that $W$ is real if $W_{-n_1,n_2}=\ov{W_{n_1,n_2}}$ for all $n$, that is, if this function is real. Products of functions correspond to discrete convolutions of their coefficients, defined as
\[
(V*W)_n:=\frac12\sum_{m_1+p_1=n_1}\Big(\sum_{m_2+p_2=n_2}V_mW_p+\sum_{|m_2-p_2|=n_2}V_mW_p\Big),
\]
and we write $W^k$ for the $k$-th power of $W$. Moreover, $\ell^1_{\nu_\theta,\nu}$ is a Banach algebra under this convolution, that is,
\begin{equation}
  \|V*W\|_{\ell^1}\le\|V\|_{\ell^1}\|W\|_{\ell^1}, \quad \text{ for all } V,W\in\ell^1_{\nu_\theta,\nu}.
\end{equation}
Note that $\ell^1$ itself is not stable under the convolution. However, since $p$ is odd, if $W \in \ell^1$, then $W^p \in \ell^1$, and $V*W \in \ell^1$ for all $V \in \ell^1_{\nu_\theta,\nu}$ such that $V_n = 0$ for $n_1$ odd.

For odd $n_1\in\Z$ and $\omega\in\C$, we emphasize the dependence of $\Lambda_{n_1}$ on $\omega$ and write $\Lambda_{n_1}(\omega)$ for the upper triangular matrix of Lemma~\ref{lem:struct}, and $$(\Lambda(\omega)W)_{n_1,\cdot}:=\Lambda_{n_1}(\omega)W_{n_1,\cdot}.$$

Let us fix a truncation size $N=(N_1,N_2)\in\N^2$ with $N_1$ odd. This represents the size of the Galerkin projection which will be used for rigorous numerical computations.

Let us define $\ell:\ell^1\to\C$ as
\begin{align}
  \ell(W) := \frac1{2i}\sum_{n_2 = 0}^{N_2} \big( W_{1,n_2}-W_{-1,n_2}\big),
\end{align}
so that $\ell(W)=\sum_{n_2=0}^{N_2}\mathrm{Im}(W_{1,n_2})$ if $W$ is real.
Let $X := \C \times \ell^1$ with the norm $\|U\|_X := |\omega| + \|W\|_{\ell^1}$ for $U = (\omega, W)$, and
\begin{align}\label{eq:F}
  F(U)  := \begin{pmatrix}
    \ell(W)\\
    \Lambda(\omega)W+W^p
  \end{pmatrix}.
\end{align}
Thus $F(U)=0$ if and only if $\ell(W)=0$ and $W$ solves \eqref{eq:modes}, that is, the profile equation \eqref{eq:ssw} with frequency $\omega$.

We apply the Newton--Kantorovich theorem to the correction variable $\Psi$ defined by $U=\bar U+A\Psi$, where $A$ approximates the inverse of $DF(\bar U)$. In this variable, we seek a fixed point of $\Psi\mapsto\Psi-F(\bar U+A\Psi)$, whose derivative is $I-DF(\bar U+A\Psi)A$. This explains why $A$ acts on the right in the bounds below. The following formulation follows from the usual Newton--Kantorovich theorem applied to $\Psi\mapsto F(\bar U+A\Psi)$ at $\Psi=0$; see, e.g., \cite{ArKoTe05,DLM,NakPluWat19,Arioli_Koch_2023}. Here $\mathcal B(X_2)$ denotes the space of bounded linear operators on $X_2$, with the operator norm, and $B_r(0)$ denotes the closed ball of radius $r$ centered at $0$ in $X_2$.

\begin{proposition}[Newton--Kantorovich]\label{th : radii polynomial}
	Let $(X_1,\|\cdot\|_{X_1})$ and $(X_2,\|\cdot\|_{X_2})$ be Banach spaces, and let $F : X_1 \to X_2$ be Fréchet differentiable. Moreover, let $\bar{U} \in X_1$, and let ${A} : X_2 \to X_1$ be an injective bounded linear operator. 
	
	Assume there exist nonnegative constants $Y$ and $Z_1$, and a nonnegative non-decreasing function $Z_2 : (0, \infty) \to [0,\infty)$, such that:
	\begin{equation}\label{eq : bounds general radii polynomial}
	\begin{aligned}
	\|{F}(\bar{U})\|_{X_2} & \le {Y}, \\
	\|I - D{F}(\bar{U}) A\|_{\mathcal{B}(X_2)} &\le {Z}_1,\\
	\|\left({D}{F}(\bar{U} + A h) - D{F}(\bar{U})\right)A\|_{\mathcal{B}(X_2)} &\le {Z}_2(r)r, \qquad \text{for all } h \in {B_r(0)} \subset X_2 \text{ and all } r>0.
	\end{aligned}  
	\end{equation}
	If there exists a radius $r>0$ such that the radii polynomial conditions are satisfied:
	\begin{equation}\label{condition radii polynomial}
	\frac{1}{2}{Z}_2(r)r^2 - (1-{Z}_1)r + {Y} < 0 \qquad \text{and} \qquad {Z}_1 + {Z}_2(r)r < 1,
	\end{equation}
	then there exists a unique $\tilde{\Psi} \in {B_r}(0) \subset X_2$ such that ${F}(\bar{U} + A \tilde{\Psi})=0$. 
\end{proposition}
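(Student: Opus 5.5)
The plan is to reduce the statement to the Banach fixed-point theorem applied to the Newton-like map
\[
T(\Psi):=\Psi-F(\bar U+A\Psi),\qquad \Psi\in X_2 .
\]
Its fixed points are exactly the zeros of $\Psi\mapsto F(\bar U+A\Psi)$. Since $A$ is injective, these correspond bijectively to zeros $U=\bar U+A\Psi$ of $F$ in the image of the ball. We work on the closed ball $B_r(0)\subset X_2$, where $r$ satisfies \eqref{condition radii polynomial}. Note that $T$ is Fréchet differentiable, with $DT(\Psi)=I-DF(\bar U+A\Psi)A$, because $F$ is Fréchet differentiable and $A$ is bounded.

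\textbf{Contraction.} For $\Psi\in B_r(0)$, the triangle inequality together with the bounds \eqref{eq : bounds general radii polynomial} give
\[
\|DT(\Psi)\|_{\mathcal B(X_2)}\le\|I-DF(\bar U)A\|+\|(DF(\bar U+A\Psi)-DF(\bar U))A\|\le Z_1+Z_2(r)r=:\kappa .
\]
The second condition in \eqref{condition radii polynomial} says $\kappa<1$. The ball is convex, so the mean value inequality applied along segments gives $\|T(\Psi_1)-T(\Psi_2)\|\le\kappa\|\Psi_1-\Psi_2\|$. Hence $T$ is a contraction on $B_r(0)$.

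\textbf{Self-mapping.} We need $T(B_r(0))\subset B_r(0)$. For $\Psi\in B_r(0)$ we split
\[
\|T(\Psi)\|\le\|T(0)\|+\|T(\Psi)-T(0)\|,\qquad \|T(0)\|=\|F(\bar U)\|\le Y .
\]
The second term needs a sharper estimate than the crude one. Set $\Psi_s=s\Psi$ and write
\[
T(\Psi)-T(0)=\int_0^1 DT(s\Psi)\Psi\,ds .
\]
Applying the third bound of \eqref{eq : bounds general radii polynomial} with radius $s\|\Psi\|\le sr$, and using that $Z_2$ is non-decreasing, gives $\|DT(s\Psi)\|\le Z_1+Z_2(r)sr$. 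Integrating in $s$ yields
\[
\|T(\Psi)\|\le Y+Z_1r+\tfrac12Z_2(r)r^2 .
\]
This is $<r$ precisely by the first condition in \eqref{condition radii polynomial}, so $T$ maps $B_r(0)$ into itself. In fact it maps into the open ball.

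To justify the integral representation in a general Banach space, either use the mean value inequality on $[0,1]$ applied to $s\mapsto T(s\Psi)-sDT(0)\Psi$, or pair with functionals via Hahn--Banach so that the scalar fundamental theorem of calculus applies. Only Fréchet differentiability is assumed, not $C^1$. The careful route is therefore the mean value inequality: for $g(s)=T(s\Psi)$, a bound on $\|g'(s)\|$ controls $\|g(1)-g(0)\|$ by $\sup_s\|g'(s)\|$. That supremum only gives $Z_1r+Z_2(r)r^2$, which is too crude. To recover the factor $\tfrac12$, I would apply the mean value inequality to $h(s)=g(s)-\int_0^s(Z_1+Z_2(r)tr)\,dt\cdot(\text{direction})$, or equivalently use the standard scalar comparison lemma: if $\|g'(s)\|\le\phi(s)$ then $\|g(1)-g(0)\|\le\int_0^1\phi$. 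This holds for differentiable $g$ by the usual $\epsilon$-argument.

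\textbf{Conclusion.} Banach's fixed-point theorem gives a unique $\tilde\Psi\in B_r(0)$ with $T(\tilde\Psi)=\tilde\Psi$, that is, $F(\bar U+A\tilde\Psi)=0$. Uniqueness of zeros in the ball follows because any such zero is a fixed point of $T$. The main delicate step is this comparison-lemma justification of the $\tfrac12$ factor under mere Fréchet differentiability; everything else is routine.
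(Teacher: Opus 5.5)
Your proof is correct and is the standard argument the paper relies on. The paper gives no separate proof: it sets up exactly the fixed-point map $T(\Psi)=\Psi-F(\bar U+A\Psi)$, with derivative $I-DF(\bar U+A\Psi)A$, and cites the usual Newton--Kantorovich/radii-polynomial theorem, whose proof is your self-map plus contraction argument on the closed ball $B_r(0)$. Of your two suggested justifications for the factor $\tfrac12$, the first (subtracting a scalar integral times a direction) does not work as written. The second, the comparison form of the mean value inequality with $\varphi(s)=Z_1rs+\tfrac12Z_2(r)r^2s^2$, does yield the factor $\tfrac12$ under mere Fr\'echet differentiability.
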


\begin{remark}\label{rem : the solution is real}
  For $U=(\omega,W)\in X$ let $\mathcal CU:=(\omega',W')$, where $\omega'$ is the complex conjugate of $\omega$ and $W'_n:=\ov{W_{-n_1,n_2}}$. We say that $U$ is real if $\mathcal CU=U$, that is, if $\omega\in\R$ and $W$ is real. The map $\mathcal C$ is an isometry of $X$, and
\begin{equation}\label{eq:sym}
F(\mathcal CU)=\mathcal CF(U),
\end{equation}
since $W'$ represents the complex conjugate of the function $W$ and $\Lambda_{-n_1}(\omega')=\ov{\Lambda_{n_1}(\omega)}$. Consequently, if $U$ is real, then $F(U)$ is real. Moreover, by \eqref{eq:sym}, if $U$ is a zero of $F$, then so is $\mathcal CU$. Now, assume that $\bar U$ is real and that $\mathcal{C}A = A\mathcal{C}$. If $\tilde\Psi$ is given by Proposition~\ref{th : radii polynomial}, then $\mathcal{C}\tilde\Psi \in B_r(0)$ and $F(\bar U + A\mathcal{C}\tilde\Psi) = \mathcal{C}F(\bar U + A\tilde\Psi) = 0$. Using the uniqueness statement of Proposition~\ref{th : radii polynomial}, we get $\mathcal{C}\tilde\Psi = \tilde\Psi$, that is, the zero $\tilde U=\bar U+A\tilde\Psi$ of $F$ is real.
\end{remark}

Our goal is to apply Proposition~\ref{th : radii polynomial} to the map $F$ defined in \eqref{eq:F}. The approximate solution $\bar U$ is a numerical approximation of a zero of $F$, and the operator $A$ is an approximate inverse of $DF(\bar U)$. The bounds $Y$, $Z_1$ and $Z_2(r)$ will be computed rigorously, by combining analytical estimates with interval arithmetic for the finite-dimensional computations. The radii polynomial conditions \eqref{condition radii polynomial} will be verified rigorously by interval arithmetic. Note that $\Lambda(\omega)$ is unbounded on $\ell^1$, so that $F$ does not map $X$ into itself: we choose the Banach spaces $X_1 := \mathbb C \times \Lambda(\bar \omega)^{-1} \ell^1$ and $X_2 := X$ to apply Proposition~\ref{th : radii polynomial}, where $X_1$ is normed by $\|(\omega,W)\|_{X_1}:=|\omega|+\|\Lambda(\bar\omega)W\|_{\ell^1}$. Since $\Lambda(\bar\omega)^{-1}$, $\partial_\omega\Lambda(\bar\omega)\Lambda(\bar\omega)^{-1}$ and $\mathcal N^2\Lambda(\bar\omega)^{-1}$, where $(\mathcal NW)_n=n_1W_n$, are bounded on $\ell^1$ (Lemmas~\ref{lem:kappa} and~\ref{lem:kappa prime} below), $X_1$ is a Banach space continuously embedded in $X$, $A$ maps $X$ boundedly into $X_1$, and $F:X_1\to X$ is a polynomial map, hence Fr\'echet differentiable, by \eqref{eq:dLam}. In particular, the bounds of Proposition~\ref{th : radii polynomial} are computed in the Banach space $X = \mathbb C \times \ell^1.$

\subsection{Construction of the approximate objects}

Recall that $N = (N_1, N_2) \in \mathbb{N}^2$, with $N_1$ odd, is the size of the Galerkin truncation. For an index $m=(m_1,m_2)$ we write $m\le N$ if $|m_1|\le N_1$ and $m_2\le N_2$. Let
\begin{align}
  (\pi^{\leq N}W)_n := \begin{cases}
    W_n, & n\le N, \\
    0, & \text{otherwise},
  \end{cases}
\end{align}
and $\pi^{> N} := I - \pi^{\leq N}$. On $X$, we slightly abuse notation and write $\pi^{\le N}(\omega,W):=(\omega,\pi^{\le N}W)$ and $\pi^{>N}(\omega,W):=(0,\pi^{>N}W)$. That is,  $g\in\ell^1$ is identified with $(0,g)\in X$. Moreover $\pi_m$ denotes the projection onto the coordinate $m$, with $e_m$ the corresponding unit sequence in $\ell^1$, and $\pi_\omega U:=\omega$, $\pi_WU:=W$ for $U=(\omega,W)$.

Let $\bar U = (\bar \omega, \bar W)\in X$ be real, with $\bar W=\pi^{\leq N_W}\bar W$ for some $N_W\in\N^2$: it is a numerical approximation of a zero of $F$. We do not assume that $N_W\le N$, so that the finite part $A_N$ of the approximate inverse below may involve fewer Fourier modes than $\bar W$.
Let $A : X \to X$ be defined as
\begin{align}\label{eq:A}
  A := A_N + \Lambda(\bar \omega)^{-1}\pi^{>N},
\end{align}
where $A_N = \pi^{\leq N} A_N\pi^{\leq N}$, that is, $A_N$ is a finite-rank operator, and $\Lambda(\bar \omega)^{-1}\pi^{>N}$ acts on the component $W$. In practice, $A_N$ is a numerical approximation of the inverse of the Galerkin projection $\pi^{\leq N} DF(\bar{U}) \pi^{\leq N}$. Following Proposition~\ref{th : radii polynomial}, we look for a zero of $F$ of the form
\[
U=\bar U+A\Psi,\qquad\Psi\in X,
\]
that is, for a fixed point of $T(\Psi):=\Psi-F(\bar U+A\Psi)$.

The approximate objects $\bar U$ and $A$ being fixed, we now concentrate on the computation of the bounds $Y$, $Z_1$ and $Z_2(r)$ of Proposition~\ref{th : radii polynomial}. This is the purpose of the next section.

\section{Computation of the bounds}\label{sec:proof}

We now verify the three bounds required by Proposition~\ref{th : radii polynomial}. The bound $Y$ measures the residual of the approximate solution, $Z_1$ measures the defect of the approximate inverse, and $Z_2(r)r$ controls the variation of the derivative of $\Psi\mapsto F(\bar U+A\Psi)$ on the ball of radius $r$. The residual and the contributions involving the finite operator $A_N$ are enclosed by interval arithmetic. To control the remaining columns, we derive estimates that are uniform in the Fourier and Chebyshev indices. Section~\ref{sec:result} combines these bounds to verify the radii polynomial conditions.

\subsection{Some preliminary computations}

The entries of $\Lambda_{n_1}(\omega)$ grow polynomially; hence, for every $W\in\ell^1$, $\Lambda(\omega)W$ is a well-defined sequence, which need not belong to $\ell^1$. The entries are polynomials of degree at most two in $\omega$: with $(\mathcal NW)_n:=n_1W_n$, for $\varsigma\in\C$ and $j<k$,
\begin{equation}\label{eq:dLam}
\Lambda(\bar\omega+\varsigma)=\Lambda(\bar\omega)+\varsigma\,\partial_\omega\Lambda(\bar\omega)-\varsigma^2\mathcal N^2,\qquad
\begin{aligned}
(\partial_\omega\Lambda_{n_1})_{kk}&=2in_1(2k+\alpha+\tfrac12+in_1\omega),\\
(\partial_\omega\Lambda_{n_1})_{jk}&=8in_1k\mu_j .
\end{aligned}
\end{equation}
From now on $\bar\omega>0$ is fixed, and we write $\Lambda:=\Lambda(\bar\omega)$, $\partial_\omega\Lambda:=\partial_\omega\Lambda(\bar\omega)$. The operator $\Lambda(\bar\omega)^{-1}$ has the blocks $\Lambda_{n_1}(\bar\omega)^{-1}$, the upper triangular matrices with columns $-\frac14H_{n_1}^{-1}e_k$ of Lemma~\ref{lem:rec}; we will show that it is bounded on $\ell^1$ (Lemma~\ref{lem:kappa}).

\medskip\noindent We will use the following facts.
\begin{enumerate}[(a)]
\item \emph{Derivative.} For $U=(\omega,W)$ and a direction $(\varsigma,h)$,
\begin{equation}\label{eq:DF}
DF(U)(\varsigma,h)=\begin{pmatrix}\ell(h)\\ \Lambda(\omega)h+pW^{p-1}*h+\varsigma\,\partial_\omega\Lambda(\omega)W\end{pmatrix},\qquad\partial_\omega\Lambda(\bar\omega+\varsigma)=\partial_\omega\Lambda-2\varsigma\,\mathcal N^2 .
\end{equation}
\item \emph{Supports.} $\Lambda$, $\partial_\omega\Lambda$ and $\Lambda^{-1}$ are diagonal in $n_1$ and upper triangular in $n_2$. Hence, for each of these operators $B$ and every $m\in\N^2$,
\begin{equation}\label{eq:tri}
B\,\pi^{\le m}=\pi^{\le m}B\,\pi^{\le m}.
\end{equation}
In particular $\Lambda\bar W$, $\partial_\omega\Lambda\bar W$ and $\mathcal N^2\bar W$ belong to $\pi^{\le N_W}\ell^1$.
\item \emph{Norms.} For a matrix $B=(B_{n,m})$ acting on $\ell^1_{\nu_\theta,\nu}$,
\begin{equation}\label{eq:opnorm}
\|B\|_{\ell^1}=\sup_m\|B\pi_m\|_{\ell^1},\qquad\|B\pi_m\|_{\ell^1}=\nu_\theta^{-|m_1|}\nu^{-m_2}\sum_n\nu_\theta^{|n_1|}\nu^{n_2}|B_{n,m}| .
\end{equation}
Similarly, the norm of an operator on $X$, or from $X$ to $\ell^1$ or $\C$, is the supremum of the norms of its columns. We write $\|\cdot\|_{\mathcal B(X)}$ for all these operator norms on $X$, and $\|\cdot\|_{\ell^1}$ for the norm of an operator on $\ell^1$.
\end{enumerate}

\subsection{Computation of the $Y$ bound}

   By \eqref{eq:F} and the definition of the norm of $X$,
   \begin{equation}\label{eq:Ybound}
   \|F(\bar U)\|_X=|\ell(\bar W)|+\|\Lambda(\bar\omega)\bar W+\bar W^p\|_{\ell^1},
   \end{equation}
   and $Y$ is an upper bound of the right-hand side. Since $\Lambda\bar W\in\pi^{\le N_W}\ell^1$ by (b), and $\bar W^p=\pi^{\le pN_W}\bar W^p$, the right-hand side of \eqref{eq:Ybound} is the norm of a finite sequence: its coefficients are computed with interval arithmetic, by the fast Fourier transform for the product $\bar W^p$. In particular, the part $\|\bar W^p-\pi^{\le N_W}\bar W^p\|_{\ell^1}$ of the residual, which is not in the Galerkin projection used to compute $\bar W$, is taken into account exactly.

\subsection{Computation of the $Z_1$ bound}

We estimate $I-DF(\bar U)A$ by separating its input columns inside and outside the Galerkin truncation. For the finite input columns, $Z_{1,0}$ bounds the defect within the truncation, while $Z_{1,1}$ accounts for the output outside it and for the truncation of $p\bar W^{p-1}$. The bound $Z_{1,2}$ controls all remaining input columns. In the rest of the paper, for $V\in\ell^1_{\nu_\theta,\nu}$, we write $M_V$ for multiplication by $V$, that is, $M_VW:=V*W$. The construction of $A$ in \eqref{eq:A} then gives the following bounds.

\begin{lemma}\label{lem:Z1}
Given $N_0 \in \mathbb N^2$, let $\bar{V} := \pi^{\leq N + N_0}(p\bar W^{p-1})$ be fixed, and let $DF_{\bar V}(\bar U)$ denote the operator obtained from $DF(\bar U)$ in \eqref{eq:DF} by replacing $p\bar W^{p-1}$ with $\bar V$. Let $Z_{1,0}$, $Z_{1,1}$ and $Z_{1,2}$ be bounds satisfying
\begin{align*}
  Z_{1,0} &\geq \|\pi^{\leq N} - \pi^{\leq N} D{F}_{\bar V}(\bar{U}) A_N \pi^{\leq N} \|_{\mathcal{B}(X)},\\
  Z_{1,1} &\geq \|\pi^{> N}  M_{\bar V} \pi_W A_N \pi^{\leq N} \|_{\mathcal{B}(X)} + \|\pi^{>N}\partial_\omega\Lambda\bar W\|_{\ell^1}\|\pi_\omega A_N\pi^{\leq N}\|_{\mathcal{B}(X)}\\
  &\quad+ \|\bar{V} - p\bar{W}^{p-1}\|_{\ell^1} \| \pi_W A_N \pi^{\leq N} \|_{\mathcal{B}(X)},\\
  Z_{1,2} & \geq \sup_{n\not\le N}\frac{|\ell(\Lambda^{-1}e_n)|+\|M_{p\bar W^{p-1}}\Lambda^{-1}e_n\|_{\ell^1}}{\|e_n\|_{\ell^1}}.
\end{align*}
Then, $Z_1 \geq \|I - D{F}(\bar{U}) A\|_{\mathcal{B}(X)}$ with $Z_1 := \max\{Z_{1,0} + Z_{1,1}, ~  Z_{1,2}\}$. Moreover, the last condition is satisfied by
\begin{equation}\label{eq:Z12product}
Z_{1,2}=\Big(\|\bar V\|_{\ell^1} + \|p\bar{W}^{p-1} - \bar{V}\|_{\ell^1} + \frac{1}{2\nu_\theta}\Big) \|\Lambda^{-1} \pi^{> N} \|_{\ell^1}.
\end{equation}

\end{lemma}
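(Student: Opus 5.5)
The plan is to compute $I-DF(\bar U)A$ column by column, using the fact (c) that the operator norm on $X$ is the supremum of the column norms. We split the input columns into those indexed by $\omega$ or by $m\le N$, where $A=A_N$, and those with $m\not\le N$, where $A=\Lambda^{-1}\pi^{>N}$. The bound then follows as a maximum over the two families.

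\emph{Finite columns.} For an input in $\pi^{\le N}X$, $(I-DF(\bar U)A)\pi^{\le N}=\pi^{\le N}-DF(\bar U)A_N\pi^{\le N}$. Write $DF(\bar U)=DF_{\bar V}(\bar U)+(0,M_{p\bar W^{p-1}-\bar V}\pi_W)$ and split the output with $\pi^{\le N}+\pi^{>N}$.
\begin{itemize}
\item The projection $\pi^{\le N}$ of the $DF_{\bar V}$ part gives exactly the $Z_{1,0}$ term.
\item The projection $\pi^{>N}$ of $DF_{\bar V}(\bar U)A_N\pi^{\le N}$ has three pieces. The $\ell$ component is a scalar, so it stays in the $\pi^{\le N}$ part. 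The piece $\Lambda\pi_WA_N$ vanishes, by \eqref{eq:tri}, since $\pi_WA_N$ is supported in $\le N$. What remains is $\pi^{>N}M_{\bar V}\pi_WA_N\pi^{\le N}$ together with $\pi^{>N}\partial_\omega\Lambda\bar W\cdot\pi_\omega A_N\pi^{\le N}$, which is bounded by the product of norms.
\item The correction term is bounded through the Banach algebra property by $\|\bar V-p\bar W^{p-1}\|_{\ell^1}\|\pi_WA_N\pi^{\le N}\|$.
\end{itemize}
These bounds give $Z_{1,1}$. Because the column norm in $X$ is the sum of the norms of the $\pi^{\le N}$ and $\pi^{>N}$ output pieces (disjoint supports in $\ell^1$, plus the $\C$ component), the finite columns are bounded by $Z_{1,0}+Z_{1,1}$.

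\emph{Infinite columns.} For $m\not\le N$ we have $Ae_m=\Lambda^{-1}e_m$, with no $\omega$ component, and $\pi_\omega A e_m=0$. Then
\[
DF(\bar U)Ae_m=\big(\ell(\Lambda^{-1}e_m),\ e_m+p\bar W^{p-1}*\Lambda^{-1}e_m\big),
\]
so the column of $I-DF(\bar U)A$ is $-\big(\ell(\Lambda^{-1}e_m),\,M_{p\bar W^{p-1}}\Lambda^{-1}e_m\big)$. Normalizing by $\|e_m\|_{\ell^1}$ gives precisely the $Z_{1,2}$ expression. Taking the supremum over all columns yields $Z_1=\max\{Z_{1,0}+Z_{1,1},Z_{1,2}\}$.

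\emph{The product formula \eqref{eq:Z12product}.} We bound the two terms separately.
\begin{itemize}
\item Splitting $p\bar W^{p-1}=\bar V+(p\bar W^{p-1}-\bar V)$ and using the algebra property gives $\|M_{p\bar W^{p-1}}\Lambda^{-1}e_m\|\le(\|\bar V\|+\|p\bar W^{p-1}-\bar V\|)\|\Lambda^{-1}e_m\|$. Here $\|\Lambda^{-1}e_m\|\le\|\Lambda^{-1}\pi^{>N}\|_{\ell^1}\|e_m\|$, since $e_m=\pi^{>N}e_m$.
\item For $\ell$, note that $\Lambda^{-1}$ is diagonal in $n_1$, so $\ell(\Lambda^{-1}e_m)\ne0$ only if $|m_1|=1$. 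In that case $|\ell(v)|\le\frac12\sum_{n_2}(|v_{1,n_2}|+|v_{-1,n_2}|)\le\frac1{2\nu_\theta}\|v\|_{\ell^1}$, because $\nu^{n_2}\ge1$ and the weight $\nu_\theta^{1}$ appears in the norm. Applying this to $v=\Lambda^{-1}e_m$ gives the term $\frac1{2\nu_\theta}\|\Lambda^{-1}\pi^{>N}\|$.
\end{itemize}
Adding the two bounds gives \eqref{eq:Z12product}.

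The main point of care is this last step, in particular not double counting the $\ell$ term. The boundedness of $\Lambda^{-1}\pi^{>N}$ itself is deferred to Lemma~\ref{lem:kappa}, so the rest is bookkeeping.
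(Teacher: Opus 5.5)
Your proposal is correct and follows essentially the same route as the paper. You split into input columns inside and outside $\pi^{\le N}$, and for the finite columns you decompose $DF(\bar U)=DF_{\bar V}(\bar U)+M_{p\bar W^{p-1}-\bar V}\pi_W$ and use \eqref{eq:tri} to isolate the $\pi^{>N}$ pieces. You compute the infinite columns exactly as $-\big(\ell(\Lambda^{-1}e_m),M_{p\bar W^{p-1}}\Lambda^{-1}e_m\big)$ and conclude with the Banach algebra property and $|\ell(W)|\le\frac1{2\nu_\theta}\|W\|_{\ell^1}$. Your restriction of the $\ell$ estimate to $|m_1|=1$ is unnecessary, since that bound holds for every $W\in\ell^1$, but it is harmless.
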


\begin{proof}
  By definition of the $Z_1$ bound and $A$ in \eqref{eq:A}, we need to estimate the following quantity:
  \begin{align*}
    \|I - D{F}(\bar{U}) A\|_{\mathcal{B}(X)} &= \max\{ \|\pi^{\leq N} - D{F}(\bar{U}) A \pi^{\leq N} \|_{\mathcal{B}(X)}, ~ \|\pi^{> N} - D{F}(\bar{U}) A \pi^{> N} \|_{\mathcal{B}(X)}\}\\
    & = \max\{ \|\pi^{\leq N} - D{F}(\bar{U}) A_N \pi^{\leq N} \|_{\mathcal{B}(X)}, ~ \|\pi^{> N} - D{F}(\bar{U}) \Lambda^{-1} \pi^{> N} \|_{\mathcal{B}(X)}\}.
  \end{align*}
 Focusing on the first term of the right-hand side, we write $DF(\bar U)=DF_{\bar V}(\bar U)+M_{p\bar W^{p-1}-\bar V}\pi_W$ and use \eqref{eq:DF} and \eqref{eq:tri}, which give
\[
\pi^{>N}DF_{\bar V}(\bar U)A_N\pi^{\leq N}=\pi^{>N}M_{\bar V}\pi_WA_N\pi^{\leq N}+\pi^{>N}\partial_\omega\Lambda\bar W\,\pi_\omega A_N\pi^{\leq N},
\]
where the last operator maps $\Phi$ to $(\pi_\omega A_N\Phi)\,\pi^{>N}\partial_\omega\Lambda\bar W$ (it vanishes if $N_W\le N$, by (b)). Hence
  \begin{align*}
   \pi^{\leq N} - D{F}(\bar{U}) A_N \pi^{\leq N} &= \big(\pi^{\leq N} - \pi^{\leq N} D{F}_{\bar V}(\bar{U}) A_N \pi^{\leq N}\big) - \pi^{> N}  M_{\bar V} \pi_W A_N \pi^{\leq N}\\
   &\quad -\pi^{>N}\partial_\omega\Lambda\bar W\,\pi_\omega A_N\pi^{\leq N} - M_{p\bar W^{p-1}-\bar V}\pi_W A_N \pi^{\leq N},
  \end{align*}
 and, since the norms of the last two operators are at most $\|\pi^{>N}\partial_\omega\Lambda\bar W\|_{\ell^1}\|\pi_\omega A_N\pi^{\leq N}\|_{\mathcal{B}(X)}$ and $\|\bar{V} - p\bar{W}^{p-1}\|_{\ell^1} \| \pi_W A_N \pi^{\leq N} \|_{\mathcal{B}(X)}$,
\begin{align*}
  \|\pi^{\leq N} - D{F}(\bar{U}) A_N \pi^{\leq N} \|_{\mathcal{B}(X)} \leq Z_{1,0} + Z_{1,1}.
\end{align*}

Now, focusing on the second term $\|\pi^{> N} - D{F}(\bar{U}) \Lambda^{-1} \pi^{> N} \|_{\mathcal{B}(X)}$, let $g \in \pi^{>N}\ell^1$. Using \eqref{eq:DF}, we have
\begin{align*}
  \left(\pi^{> N} - D{F}(\bar{U}) \Lambda^{-1} \pi^{> N}\right) \begin{pmatrix} 0 \\ g \end{pmatrix} = - \begin{pmatrix} \ell(\Lambda^{-1}g) \\ M_{p\bar{W}^{p-1}}\Lambda^{-1}g \end{pmatrix}.
\end{align*}
Taking $g=e_n$, $n\not\le N$, and using \eqref{eq:opnorm}, we obtain
\[
  \|\pi^{> N} - D{F}(\bar{U}) \Lambda^{-1} \pi^{> N} \|_{\mathcal{B}(X)} = \sup_{n\not\le N}\frac{|\ell(\Lambda^{-1}e_n)|+\|M_{p\bar W^{p-1}}\Lambda^{-1}e_n\|_{\ell^1}}{\|e_n\|_{\ell^1}}\leq Z_{1,2}.
\]
Finally, $|\ell(W)| \leq \frac{1}{2\nu_\theta}\|W\|_{\ell^1}$ and $\|M_{p\bar W^{p-1}}W\|_{\ell^1}\le\|p\bar W^{p-1}\|_{\ell^1}\|W\|_{\ell^1}\le\big(\|\bar V\|_{\ell^1}+\|p\bar{W}^{p-1} - \bar{V}\|_{\ell^1}\big)\|W\|_{\ell^1}$ for all $W \in \ell^1$. Hence each quotient above is at most $\big(\|\bar V\|_{\ell^1} + \|p\bar{W}^{p-1} - \bar{V}\|_{\ell^1} + \frac{1}{2\nu_\theta}\big)\|\Lambda^{-1}\pi_n\|_{\ell^1}$, and \eqref{eq:Z12product} follows from \eqref{eq:opnorm}.
\end{proof}

By the above lemma, $Z_{1,0}$ and $Z_{1,1}$ can be computed with interval arithmetic, since they are matrix norms or norms of finite sequences. The quantities $\|\bar V\|_{\ell^1}$ and $\|p\bar W^{p-1} - \bar V\|_{\ell^1}$ are enclosed with interval arithmetic, as for \eqref{eq:Ybound}, since $p\bar W^{p-1}=\pi^{\le(p-1)N_W}(p\bar W^{p-1})$ is a finite sequence. For the computation of $Z_{1,2}$, we use \eqref{eq:Z12product} for all but finitely many columns, which requires the norm of $\Lambda^{-1} \pi^{> N}$. This norm is computed in Section~\ref{sec:main}; in Section~\ref{sec:exact}, we also compute directly the quotients of Lemma~\ref{lem:Z1} for the columns on which \eqref{eq:Z12product} is far from sharp.

\subsubsection{Computation of the norm of $\Lambda^{-1}\pi^{>N}$}\label{sec:main}

To bound $\|\Lambda^{-1}\pi^{>N}\|_{\ell^1}$, choose $K=(K_1,K_2)\in\N^2$ with $K_1$ odd, $N_1<K_1$ and $N_2<K_2$. We divide the columns into three regions: the finite set $n\le K$, the columns with $|n_1|\le K_1$ and $n_2>K_2$, and the columns with $|n_1|>K_1$. We first enclose the finite set by verified matrix inversions. For the two infinite regions, the recursion of Lemma~\ref{lem:rec} gives uniform estimates: at large Fourier indices it controls entire columns, while at large Chebyshev indices it reduces the low rows to two previously enclosed columns.

For the finite set, we construct approximate inverses for all the matrices $\Lambda_{n_1}$ with $|n_1| \leq K_1$. Namely, we construct an upper triangular matrix $X_{n_1}$ as an approximate inverse of the Galerkin truncation of order $K_2$ of $\Lambda_{n_1}$, denoted $\Lambda_{n_1}^{(K_2)}$, and let
\[
\eps_{n_1}:=\|I_{K_2}-\Lambda_{n_1}^{(K_2)}X_{n_1}\|_{\ell^1_\nu},
\]
where $I_{K_2}$ is the identity on $\C^{K_2+1}.$
By \eqref{eq:tri}, the computation of $\eps_{n_1}$ is finite-dimensional, of size $K_2+1$. The matrices $X_{n_1}$ allow us to control the columns $n$ of $\Lambda^{-1}$ for all $n \leq K$. Indeed, since $\Lambda_{n_1}$ is upper triangular, the inverse of its section of order $K_2$ is the section of order $K_2$ of $\Lambda_{n_1}^{-1}$. Then, assuming that $\eps_{n_1}<1$ and  using a Neumann series argument, we readily have
\begin{equation}\label{eq: finite number of columns}
  \|\Lambda^{-1} \pi_n\|_{\ell^1} \leq  \|X_{n_1} \pi_{n_2}\|_{\ell^1_\nu} + \frac{\eps_{n_1}}{1- \eps_{n_1}}\|X_{n_1}\|_{\ell^1_\nu} =: \lambda_n \quad  \text{ for all } n \leq K.
\end{equation}
Similarly, using \eqref{eq:tri} once more, we have
\begin{equation}\label{eq: finite number of columns derivative}
  \|\partial_\omega\Lambda\,\Lambda^{-1} \pi_n\|_{\ell^1} \leq  \|\partial_\omega\Lambda_{n_1}X_{n_1} \pi_{n_2}\|_{\ell^1_\nu} + \frac{\eps_{n_1}}{1- \eps_{n_1}}\|\partial_\omega\Lambda_{n_1}X_{n_1}\|_{\ell^1_\nu} =: \lambda'_n \quad  \text{ for all } n \leq K,
\end{equation}
which will be used in Section~\ref{sec:Z2} for the computation of the $Z_2$ bound. The above is computed entirely with interval arithmetic. This settles the columns $n \leq K$.

For the remaining columns, we rely on the recursion derived in Lemma~\ref{lem:rec}. In the rest of this section, $n_1$ is odd, and $s_{n_1}$, $a_{n_1}$, $D_j$, $\gamma_{n_1}'$, $M_j$ and $\zeta_j$ are the quantities defined in \eqref{eq:sab}, \eqref{eq:gam} and Lemma~\ref{lem:rec} for $n = n_1$ and $\omega = \bar\omega$. In particular, we recall that
\begin{equation}\label{eq:modD}
|D_j|^2=\Big(\big(j+\tfrac\alpha2\big)^2+\tfrac{s_{n_1}^2}4\Big)\Big(\big(j+\tfrac{\alpha+1}2\big)^2+\tfrac{s_{n_1}^2}4\Big) \quad \text{ and } \quad |\gamma_{n_1}'|=\big(g^2+s_{n_1}^2\big)^{\frac{1}{2}},
\end{equation}
where $g:=d-\alpha-\frac32$ denotes the real part of $\gamma_{n_1}'$.
Moreover, we endow $\C^2$ with the following norm
\[
|\zeta|_\nu:=\max\Big\{|\zeta^{\rm e}|,\ \frac2{\nu-1}\,|\zeta^{\rm e}+\zeta^{\rm o}|\Big\}, \quad \text{ for all } \zeta=(\zeta^{\rm e},\zeta^{\rm o})^\top\in\C^2,
\]
and we define 
\begin{equation}\label{eq:beta}
\beta_{n_1}:=\frac{4\big(2(d-1)+(\nu-1)|\gamma_{n_1}'|\big)}{(\nu-1)^2}.
\end{equation}
The norm $|\cdot|_\nu$ is chosen such that the matrices $M_j$ of Lemma~\ref{lem:rec} have norm at most $\frac{1+\nu}2 < \nu$ as soon as $j$ or $|n_1|$ is large enough. This is the purpose of the next lemma, which requires $K$ to be chosen such that
\begin{align}
\frac{4(d-1)}{(\nu-1)(K_2-1)}+\Big(1+\frac{4g}{K_2-1}+\frac{4(d-1)g}{(K_2-1)^2}+\frac{\alpha(\alpha+1)g}{(K_2-1)^3}\Big)^{\frac{1}{2}}&\le\frac{1+\nu}2,\label{eq:H1}\\
\frac{4(d-1)}{(\nu-1)s_{K_1+2}}+\Big(1+\frac{2.6\,g}{s_{K_1+2}}+\frac{4(d-1)g}{s_{K_1+2}^2}+\frac{2.6\,\alpha(\alpha+1)g}{s_{K_1+2}^3}\Big)^{\frac{1}{2}}&\le\frac{1+\nu}2.\label{eq:H2}
\end{align}
For $\nu=1.1$ and $\omega=\bar\omega$, \eqref{eq:H1} and \eqref{eq:H2} are satisfied as soon as $K_2\ge6662$ and $K_1\ge163$ for $(d,p)=(9,3)$, where $g=\frac{13}2$, $K_2\ge5002$ and $K_1\ge203$ for $(7,5)$, where $g=5$, and $K_2\ge4168$ and $K_1\ge205$ for $(6,7)$, where $g=\frac{25}6$. Since the first terms of \eqref{eq:H1} and \eqref{eq:H2} must be smaller than $\frac{\nu-1}2$, one needs $K_2\gtrsim8(d-1)/(\nu-1)^2$ and $K_1\gtrsim K_2/\omega$: the weight $\nu$ cannot be taken close to $1$. In the proofs of Section~\ref{sec:result}, $K_2$ is larger (Table~\ref{tab:bounds}), so that the bound of Lemma~\ref{lem:fixedn} below is small.

\begin{lemma}\label{lem:transfer}
Let $j \geq 1$. If \eqref{eq:H1} is satisfied and $j\ge K_2-1$, or if \eqref{eq:H2} is satisfied and $|n_1|>K_1$, then
\begin{equation}\label{eq:contraction}
|M_j\zeta|_\nu\le\frac{1+\nu}2|\zeta|_\nu \quad \text{ for all } \zeta\in\C^2.
\end{equation}
\end{lemma}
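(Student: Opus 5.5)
The plan is to check the two components of the norm $|\cdot|_\nu$ separately, writing $\zeta'=M_j\zeta$ and $S:=\zeta^{\rm e}+\zeta^{\rm o}$. By definition of the norm, $|\zeta^{\rm e}|\le|\zeta|_\nu$ and $|S|\le\frac{\nu-1}2|\zeta|_\nu$.

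\emph{First component.} Since $\zeta'^{\rm e}=\zeta^{\rm o}=S-\zeta^{\rm e}$, we get
\[
|\zeta'^{\rm e}|\le\Big(\tfrac{\nu-1}2+1\Big)|\zeta|_\nu=\tfrac{1+\nu}2|\zeta|_\nu.
\]
This holds for every $j$ and $n_1$, with no hypothesis; it is the reason for the choice of weight $\frac2{\nu-1}$ in the norm.

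\emph{Second component.} From Lemma~\ref{lem:rec} and $\gamma_{n_1}'-\gamma_{n_1}=d-1$,
\[
\zeta'^{\rm e}+\zeta'^{\rm o}=\Big(1-\frac{2j\gamma_{n_1}'}{D_j}\Big)S+\frac{2j(d-1)}{D_j}\,\zeta^{\rm e}.
\]
Hence it suffices to show
\[
\frac{4j(d-1)}{(\nu-1)|D_j|}+\Big|1-\frac{2j\gamma_{n_1}'}{D_j}\Big|\le\frac{1+\nu}2 .
\]
This is exactly the shape of \eqref{eq:H1} and \eqref{eq:H2}, so the remaining work is to bound each of the two terms by the corresponding term there.

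\emph{First term.} By Lemma~\ref{lem:struct}, $|D_j|\ge j^2+\frac{s_{n_1}^2}4$.
\begin{itemize}
\item If $j\ge K_2-1$, then $|D_j|\ge j(K_2-1)$, which gives the bound $\frac{4(d-1)}{(\nu-1)(K_2-1)}$.
\item If $|n_1|>K_1$, then AM--GM gives $j^2+\frac{s^2}4\ge js$, so $\frac{2j}{|D_j|}\le\frac2{s_{n_1}}\le\frac{2}{s_{K_1+2}}$, since $|n_1|\ge K_1+2$ by parity. This gives the bound $\frac{4(d-1)}{(\nu-1)s_{K_1+2}}$.
\end{itemize}

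\emph{Second term.} Write $D_j=-(j+a)(j+b)$ with $a=a_{n_1}$, $b=b_{n_1}$. Since $a+b=\alpha+\frac12+is$, a direct computation gives
\[
D_j-2j\gamma_{n_1}'=-\big[(j-a)(j-b)+2(d-1)j\big].
\]
I would then expand $|(j-a)(j-b)+2(d-1)j|^2$ and $|(j+a)(j+b)|^2$ explicitly as polynomials in $j$ and $s^2$, using $\mathrm{Re}\,a=\frac\alpha2$ and $\mathrm{Re}\,b=\frac{\alpha+1}2$. The aim is to write the ratio as $1+R$, where $R$ collects the excess terms, and to bound $R$ term by term. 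These terms should be of the form $g\,j/|D_j|$-type quantities weighted by $j$, $(d-1)j$ and $\alpha(\alpha+1)$; the $(j-a)(j-b)$ part contributes sign-favourable cross terms that I would simply drop.
\begin{itemize}
\item When $j\ge K_2-1$, I would bound the quantities $j^m/|D_j|^{\dots}$ using $|D_j|\ge j^2$ to get the powers $(K_2-1)^{-1},(K_2-1)^{-2},(K_2-1)^{-3}$ in \eqref{eq:H1}.
\item When $|n_1|>K_1$, the bound must hold uniformly in $j\ge1$, including $j\sim s$. Here I would maximize expressions like $\frac{js}{j^2+s^2/4}$ and their higher analogues over $j$, which should produce the numerical constant $2.6$ in \eqref{eq:H2}.
\end{itemize}

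The main obstacle is this last expansion. One has to organize $|D_j-2j\gamma'|^2-|D_j|^2$ so that every positive term is controlled by exactly the four terms under the square roots in \eqref{eq:H1} and \eqref{eq:H2}, and one has to justify the constant $2.6$ via the uniform-in-$j$ maximization when $s$ is large. I would carry it out by grouping terms according to powers of $g$ and checking each group separately.
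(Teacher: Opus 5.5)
Your proposal is correct and follows the paper's proof. It uses the same splitting of $|\cdot|_\nu$ into its two components, the same reduction to $\frac{4(d-1)j}{(\nu-1)|D_j|}+\big|1-\frac{2j\gamma_{n_1}'}{D_j}\big|\le\frac{1+\nu}2$, and the same two bounding devices: $|D_j|\ge j^2$ in one case and maximization in $t=j/|s_{n_1}|$ in the other.

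The expansion you defer gives exactly
$\big|1-\frac{2j\gamma_{n_1}'}{D_j}\big|^2=1+\frac{g(4j^3+4(d-1)j^2+\alpha(\alpha+1)j)-(d+\alpha-\frac12)js_{n_1}^2}{|D_j|^2}$.
The paper gets this by writing $-D_j=P+iQ$ and the numerator as $(P+2gj)+i(Q-2js_{n_1})$. Only the $js_{n_1}^2$ term may be dropped; discarding all of $|(j-a)(j-b)|^2-|D_j|^2$ would leave $4(d-1)$ instead of $4g$ in front of $j^3$. The constant $2.6$ comes from $4\max_t t^3/(t^2+\frac14)^2\le 4\cdot 0.65$ and $\max_t t/(t^2+\frac14)^2\le2.6$.
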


\begin{proof}
Let $\zeta \in \C^2$ and let $\xi:=M_j\zeta$. Using \eqref{eq:rec} and $\gamma_{n_1}'-\gamma_{n_1}=d-1$, we get
\[
\xi^{\rm e}=-\zeta^{\rm e}+(\zeta^{\rm e}+\zeta^{\rm o}),\qquad\xi^{\rm e}+\xi^{\rm o}=\frac{2(d-1)j}{D_j}\,\zeta^{\rm e}+\Big(1-\frac{2j\gamma_{n_1}'}{D_j}\Big)(\zeta^{\rm e}+\zeta^{\rm o}).
\]
This implies that
\begin{equation}\label{eq:normMj}
|\xi|_\nu\le\max\Big\{\frac{1+\nu}2,\ \frac{4(d-1)j}{(\nu-1)|D_j|}+\Big|1-\frac{2j\gamma_{n_1}'}{D_j}\Big|\Big\}|\zeta|_\nu.
\end{equation}
Now, we have $-D_j = (j+a_{n_1})(j+b_{n_1})=P+iQ$ with $P:=(j+\frac\alpha2)(j+\frac{\alpha+1}2)-\frac{s_{n_1}^2}4$ and $Q:=s_{n_1}(j+\frac{2\alpha+1}4)$. Then $1-\frac{2j\gamma_{n_1}'}{D_j}=\frac{(P+2gj)+i(Q-2js_{n_1})}{P+iQ}$, so that, using $g+\alpha+\frac12=d-1$,
\begin{equation}\label{eq:modulus}
\Big|1-\frac{2j\gamma_{n_1}'}{D_j}\Big|^2=1+\frac{g\big(4j^3+4(d-1)j^2+\alpha(\alpha+1)j\big)-(d+\alpha-\frac12)js_{n_1}^2}{|D_j|^2}.
\end{equation}
If $j\ge K_2-1$, we use that $|D_j|\ge j^2$ by \eqref{eq:modD} and we drop the last term of the numerator, which is nonpositive. Since the left-hand side of \eqref{eq:H1} is decreasing in $K_2$, we obtain \eqref{eq:contraction} by combining \eqref{eq:normMj}, \eqref{eq:modulus} and \eqref{eq:H1}. If $|n_1| > K_1$, then $|n_1| \geq K_1+2$ since $n_1$ and $K_1$ are odd, hence $|s_{n_1}|\ge s_{K_1+2}$, and we use that $|D_j|\ge j^2+\frac{s_{n_1}^2}4\ge j|s_{n_1}|$. Moreover, with $t:=j/|s_{n_1}|$, we have
\begin{align*}
\frac{j^3}{|D_j|^2} &\leq \frac{1}{|s_{n_1}|}\frac{t^3}{(t^2+\frac14)^2}\le\frac{0.65}{s_{K_1+2}},\qquad
\frac{j^2}{|D_j|^2}\le \frac{1}{s_{n_1}^2}\frac{t^2}{(t^2+\frac14)^2}\le\frac{1}{s_{K_1+2}^2},\\
\frac{j}{|D_j|^2} &\leq \frac{1}{|s_{n_1}|^3}\frac{t}{(t^2+\frac14)^2}\le\frac{2.6}{s_{K_1+2}^3},
\end{align*}
where the maxima over $t>0$ are attained at $t^2=\frac34$, $\frac14$ and $\frac1{12}$ respectively. Since we also have $\frac{j}{|D_j|} \leq \frac{1}{s_{K_1+2}}$, we conclude the proof by combining \eqref{eq:normMj}, \eqref{eq:modulus} and \eqref{eq:H2}.
\end{proof}

Using the above control on the matrices $M_j$, we can estimate the entries of a given column of $\Lambda_{n_1}^{-1}$, and of $\partial_\omega\Lambda_{n_1}\Lambda_{n_1}^{-1}$, by the recursion \eqref{eq:rec}.

\begin{lemma}\label{lem:rows}
Let $k >0$,   $0\le J<k$ and assume that \eqref{eq:contraction} is satisfied for all $J<j<k$. Then, recalling the definition of $\beta_{n_1}$ in \eqref{eq:beta},
\begin{align}
\sum_{j=J}^k\nu^{j-k}\big|\big(\Lambda_{n_1}^{-1}\big)_{jk}\big|&\le\frac1{4|D_k|}\Big(1+\frac{k\beta_{n_1}}{|D_J|}\Big),\label{eq:rows1}\\
\sum_{j=J}^k\nu^{j-k}\big|\big(\partial_\omega\Lambda_{n_1}\Lambda_{n_1}^{-1}\big)_{jk}\big|&\le\frac{|n_1|}{2|D_k|}\Big(|2k+\alpha+\tfrac12+is_{n_1}|+\frac{2k\beta_{n_1}}{|J+a_{n_1}|}+\frac{8k}{\nu-1}\Big).\label{eq:rows2}
\end{align}
\end{lemma}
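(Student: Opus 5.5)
We prove both estimates by working with the exact formula for the $k$-th column from Lemma~\ref{lem:rec}. The $k$-th column of $\Lambda_{n_1}^{-1}=-\frac14H_{n_1}^{-1}$ is $-\frac14x$, where $x=H_{n_1}^{-1}e_k$. Throughout, write $q:=\frac{1+\nu}2<\nu$ and abbreviate $\gamma=\gamma_{n_1}$, $\gamma'=\gamma_{n_1}'$, $a=a_{n_1}$, $b=b_{n_1}$, $s=s_{n_1}$.

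\emph{Step 1: a pointwise bound on the off-diagonal entries.} Using $\gamma'-\gamma=d-1$, we rewrite the numerator in \eqref{eq:rec} as $\gamma\zeta^{\rm e}_j+\gamma'\zeta^{\rm o}_j=\gamma'(\zeta^{\rm e}_j+\zeta^{\rm o}_j)-(d-1)\zeta^{\rm e}_j$. By the definition of $|\cdot|_\nu$ we have $|\zeta^{\rm e}|\le|\zeta|_\nu$ and $|\zeta^{\rm e}+\zeta^{\rm o}|\le\frac{\nu-1}2|\zeta|_\nu$. Hence, for $j<k$,
\[
|x_j|\le\frac{(d-1)+\frac{\nu-1}2|\gamma'|}{|D_j|}\,|\zeta_j|_\nu=\frac{\beta_{n_1}(\nu-1)^2}{8|D_j|}\,|\zeta_j|_\nu .
\]
The initial vector satisfies $|\zeta_{k-1}|_\nu=\frac{4k}{(\nu-1)|D_k|}$, since its even component vanishes. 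For $J\le j\le k-1$ we have $\zeta_j=M_{j+1}\cdots M_{k-1}\zeta_{k-1}$, and the indices $j+1,\dots,k-1$ all lie in $(J,k)$. The hypothesis \eqref{eq:contraction} therefore gives $|\zeta_j|_\nu\le q^{k-1-j}|\zeta_{k-1}|_\nu$. Finally, by \eqref{eq:modD}, $|D_j|$ is increasing in $j$, so $|D_j|\ge|D_J|$ for $j\ge J$.

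\emph{Step 2: summing to get \eqref{eq:rows1}.} The diagonal term contributes $|x_k|=1/|D_k|$. For the off-diagonal terms, put $m=k-j\ge1$ and use the geometric sum
\[
\sum_{m\ge1}\nu^{-m}q^{m-1}=\frac1{\nu-q}=\frac2{\nu-1}.
\]
This gives
\[
\sum_{j=J}^{k-1}\nu^{j-k}|x_j|\le\frac{\beta_{n_1}(\nu-1)^2}{8|D_J|}\cdot\frac{4k}{(\nu-1)|D_k|}\cdot\frac2{\nu-1}=\frac{k\beta_{n_1}}{|D_J||D_k|}.
\]
Multiplying the total by $\frac14$ yields \eqref{eq:rows1}.

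\emph{Step 3: the derivative operator, giving \eqref{eq:rows2}.} By \eqref{eq:dLam}, the $j$-th entry of $\partial_\omega\Lambda_{n_1}(-\frac14x)$ is
\[
-\tfrac14\Big(2in_1(2j+\alpha+\tfrac12+is)x_j+8in_1\mu_j\sum_{i>j}ix_i\Big),
\]
and $\sum_{i>j}2ix_i=\zeta^{\rm e}_j+\zeta^{\rm o}_j$ by the definition of $\zeta_j$. We bound the three resulting contributions as follows.
\begin{itemize}
\item[(i)] The diagonal term $j=k$ (where $\zeta_k=0$) gives $\frac{|n_1|}{2|D_k|}|2k+\alpha+\frac12+is|$.
\item[(ii)] For $j<k$, note $2j+\alpha+\frac12+is=(j+a)+(j+b)$ and $|j+a|\le|j+b|$. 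Hence $|2j+\alpha+\frac12+is|/|D_j|\le 2/|j+a|\le2/|J+a|$. Summing exactly as in Step 2 gives $\frac{|n_1|}{2|D_k|}\cdot\frac{2k\beta_{n_1}}{|J+a|}$.
\item[(iii)] For the $\zeta$-term, use $|\zeta^{\rm e}_j+\zeta^{\rm o}_j|\le\frac{\nu-1}2q^{k-1-j}\frac{4k}{(\nu-1)|D_k|}$ and the same geometric sum. This gives $|n_1|\cdot\frac{2k}{|D_k|}\cdot\frac2{\nu-1}=\frac{|n_1|}{2|D_k|}\cdot\frac{8k}{\nu-1}$.
\end{itemize}
Adding (i)--(iii) gives \eqref{eq:rows2}.

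The computations themselves are routine. The only real point is the decomposition of the numerator in Step 1 into $\zeta^{\rm e}$ and $\zeta^{\rm e}+\zeta^{\rm o}$, which is what makes the norm $|\cdot|_\nu$ and the constant $\beta_{n_1}$ match exactly. Beyond that, one needs the monotonicity of $|D_j|$ and $|j+a|$ in $j$, both immediate from \eqref{eq:modD}, and some care with the index range of the contraction hypothesis.
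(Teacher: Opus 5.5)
Your proposal is correct and follows essentially the same argument as the paper: the decomposition $\gamma_{n_1}\zeta^{\rm e}_j+\gamma_{n_1}'\zeta^{\rm o}_j=-(d-1)\zeta^{\rm e}_j+\gamma_{n_1}'(\zeta^{\rm e}_j+\zeta^{\rm o}_j)$, the propagation of $|\zeta_{k-1}|_\nu=\frac{4k}{(\nu-1)|D_k|}$ through the contracting transfer matrices, and the geometric sum $\sum_{m\ge1}\nu^{-m}\big(\frac{1+\nu}2\big)^{m-1}=\frac2{\nu-1}$ are exactly the paper's steps. The only cosmetic difference is in the derivative estimate. There you bound the $\sigma_jx_j$ term and the $\zeta^{\rm e}_j+\zeta^{\rm o}_j$ term separately, whereas the paper first substitutes the formula for $x_j$ and regroups. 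After the triangle inequality the two bounds coincide term by term.
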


\begin{proof}
Let $x:=H_{n_1}^{-1}e_k$, so that $\Lambda_{n_1}^{-1}e_k=-\frac14x$ and $x_k=\frac{1}{D_k}$ by Lemma~\ref{lem:rec}. Since $\zeta_{k-1}=(0,\frac{2k}{D_k})^\top$ and $\zeta_{j-1}=M_j\zeta_j$, \eqref{eq:contraction} yields
\begin{equation}\label{eq:state}
|\zeta_j|_\nu\le\frac{4k}{(\nu-1)|D_k|}\Big(\frac{1+\nu}2\Big)^{k-1-j} \quad \text{ for all } J\le j<k.
\end{equation}
Now, using \eqref{eq:rec}, we have $x_j=-\frac{\mu_j}{D_j}\big(-(d-1)\zeta_j^{\rm e}+\gamma_{n_1}'(\zeta_j^{\rm e}+\zeta_j^{\rm o})\big)$, hence
\begin{equation}\label{eq:xj}
|x_j|\le\frac{2(d-1)+(\nu-1)|\gamma_{n_1}'|}{2|D_j|}|\zeta_j|_\nu \quad \text{ for all } J\le j<k.
\end{equation}
Since $|D_j|$ is increasing in $j$ and $\sum_{m\ge1}\nu^{-m}\big(\frac{1+\nu}2\big)^{m-1}=\frac2{\nu-1}$, we obtain \eqref{eq:rows1} by combining \eqref{eq:state}, \eqref{eq:xj} and \eqref{eq:beta}. 

We now turn to \eqref{eq:rows2}. Let $\sigma_j:=2j+\alpha+\frac12+is_{n_1}=(j+a_{n_1})+(j+b_{n_1})$. Using \eqref{eq:dLam} and the definition of $\zeta_j$ given in the proof of Lemma~\ref{lem:rec}, we have for all $j<k$
\begin{align*}
(\partial_\omega\Lambda_{n_1}x)_j&=2in_1\sigma_jx_j+4in_1\mu_j(\zeta_j^{\rm e}+\zeta_j^{\rm o})\\
&=4in_1\mu_j\Big[\frac{(d-1)\sigma_j}{2D_j}\zeta_j^{\rm e}+\Big(1-\frac{\sigma_j\gamma_{n_1}'}{2D_j}\Big)(\zeta_j^{\rm e}+\zeta_j^{\rm o})\Big].
\end{align*}
Since $|j+a_{n_1}| \leq |j+b_{n_1}|$, we have $\frac{|\sigma_j|}{|D_j|}\le \frac{2}{|j+a_{n_1}|}$, hence
\[
\big|(\partial_\omega\Lambda_{n_1}x)_j\big|\le2|n_1|\Big(\frac{2(d-1)+(\nu-1)|\gamma_{n_1}'|}{|j+a_{n_1}|}+\nu-1\Big)|\zeta_j|_\nu \quad \text{ for all } J\le j<k.
\]
We conclude the proof using \eqref{eq:state}, the identity $(\partial_\omega\Lambda_{n_1}\Lambda_{n_1}^{-1})_{kk}=-\frac{in_1\sigma_k}{2D_k}$ and that $|j+a_{n_1}|$ is increasing in $j$.
\end{proof}

We now turn to the columns with $|n_1| > K_1$, which are bounded uniformly in the next lemma.
\begin{lemma}\label{lem:T1}
  Assume that \eqref{eq:H2} is satisfied and let $|n_1| > K_1$. Then 
  \begin{align*}
    \|\Lambda_{n_1}^{-1}\|_{\ell^1_\nu} \leq \frac{\eta}{|n_1|^2} \quad  \text{ and } \quad  \|\partial_\omega\Lambda_{n_1}\Lambda_{n_1}^{-1}\|_{\ell^1_\nu} \leq \eta',
  \end{align*}
  where, recalling \eqref{eq:beta},
  \begin{equation}\label{eq:eta}
    \eta := \frac1{\bar\omega^2}\Big(1+\frac{\beta_{K_1+2}}{(K_1+2)\bar\omega}\Big) \quad \text{ and } \quad 
\eta':=\frac1{\bar\omega}\Big(2+\frac4{\nu-1}+\frac{2\beta_{K_1+2}}{(K_1+2)\bar\omega}\Big).
  \end{equation}
\end{lemma}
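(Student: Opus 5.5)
We apply Lemma~\ref{lem:rows} with $J=0$ to each column $k$ of $\Lambda_{n_1}^{-1}$ and of $\partial_\omega\Lambda_{n_1}\Lambda_{n_1}^{-1}$. Since $|n_1|>K_1$ and \eqref{eq:H2} holds, Lemma~\ref{lem:transfer} gives \eqref{eq:contraction} for every $j\ge1$. This is exactly the hypothesis of Lemma~\ref{lem:rows} for all $0<j<k$. By \eqref{eq:opnorm}, restricted to a single Fourier block with the $\nu_\theta$ weights cancelling, $\|\Lambda_{n_1}^{-1}\|_{\ell^1_\nu}=\sup_k\sum_{j\le k}\nu^{j-k}|(\Lambda_{n_1}^{-1})_{jk}|$. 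The column $k=0$ is just $1/(4|D_0|)$, so the task reduces to bounding the right-hand sides of \eqref{eq:rows1} and \eqref{eq:rows2}, with $J=0$, uniformly in $k$.

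For \eqref{eq:rows1} we need $\frac1{4|D_k|}\big(1+\frac{k\beta_{n_1}}{|D_0|}\big)\le\eta/|n_1|^2$. We use $|D_k|\ge k^2+\frac{s_{n_1}^2}4$ from Lemma~\ref{lem:struct}, so $\frac1{4|D_k|}\le\frac1{s_{n_1}^2}=\frac1{|n_1|^2\bar\omega^2}$. For the second term we use $|D_k|\ge k|s_{n_1}|$ together with $|D_0|\ge\frac{s_{n_1}^2}4$, which gives
\[
\frac{k\beta_{n_1}}{4|D_k||D_0|}\le\frac{\beta_{n_1}}{s_{n_1}^3}.
\]
It then remains to check that $\beta_{n_1}/|n_1|$ is maximized at $|n_1|=K_1+2$. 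This holds because $\beta_{n_1}=c_1+c_2|\gamma'_{n_1}|$ with $|\gamma'_{n_1}|\le g+s_{n_1}$, so $\beta_{n_1}/s_{n_1}$ is decreasing in $|n_1|$. Hence $\beta_{n_1}/s_{n_1}^3\le\frac{1}{s_{n_1}^2}\cdot\frac{\beta_{K_1+2}}{s_{K_1+2}}$, and $s_{K_1+2}=(K_1+2)\bar\omega$ produces the form of $\eta$.

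For \eqref{eq:rows2} we bound the three terms separately. First, $|2k+\alpha+\tfrac12+is_{n_1}|=|(k+a)+(k+b)|\le2|k+b_{n_1}|$. For the mixed term we use $|D_k|=|k+a||k+b|\ge\frac12(k+\tfrac\alpha2+\dots)$, or more cleanly $|D_k|\ge\frac{s_{n_1}}2|k+b_{n_1}|$, since $|k+a_{n_1}|\ge s_{n_1}/2$. Thus $\frac{|n_1|}{2|D_k|}\cdot2|k+b|\le\frac{2|n_1|}{s_{n_1}}=\frac2{\bar\omega}$. Next, $\frac{|n_1|}{2|D_k|}\cdot\frac{8k}{\nu-1}\le\frac{4|n_1|k}{(\nu-1)k s_{n_1}}=\frac4{(\nu-1)\bar\omega}$, using $|D_k|\ge k s_{n_1}$. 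Finally,
\[
\frac{|n_1|}{2|D_k|}\cdot\frac{2k\beta_{n_1}}{|a_{n_1}|}\le\frac{|n_1|k\beta_{n_1}}{k s_{n_1}\,s_{n_1}/2}=\frac{2\beta_{n_1}}{\bar\omega s_{n_1}},
\]
and the same monotonicity of $\beta_{n_1}/s_{n_1}$ bounds this by $\frac{2\beta_{K_1+2}}{(K_1+2)\bar\omega^2}$. Summing the three pieces gives $\eta'$.

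The only delicate points are these elementary lower bounds for $|D_k|$ and $|a_{n_1}|$, and the monotonicity of $\beta_{n_1}/s_{n_1}$ in $|n_1|$. The latter follows from $|\gamma'_{n_1}|=(g^2+s_{n_1}^2)^{1/2}$: the quotient $\beta/s$ equals a constant divided by $s$ plus $c\sqrt{g^2/s^2+1}$, and both terms decrease in $s$. The contraction step itself is already supplied by Lemmas~\ref{lem:transfer} and~\ref{lem:rows}.
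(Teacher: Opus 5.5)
Your proposal is correct and is essentially the paper's proof: you apply Lemma~\ref{lem:rows} with $J=0$, with the contraction for all $j\ge1$ supplied by Lemma~\ref{lem:transfer}, together with the lower bounds $|D_k|\ge\max\{k|s_{n_1}|,\frac{s_{n_1}^2}4\}$ and $|a_{n_1}|\ge\frac{|s_{n_1}|}2$, and the fact that $\beta_{n_1}/|s_{n_1}|$ decreases in $|n_1|$. One small correction: that monotonicity should rest on your closing argument via $|\gamma_{n_1}'|=(g^2+s_{n_1}^2)^{1/2}$, since the bound $|\gamma_{n_1}'|\le g+s_{n_1}$ you cite first does not by itself imply it.
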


\begin{proof}
Let $k \geq 0$. By Lemma~\ref{lem:transfer}, \eqref{eq:contraction} holds for all $j \geq 1$. Consequently, we can apply Lemma~\ref{lem:rows} with $J=0$ if $k \geq 1$, and \eqref{eq:rows1} and \eqref{eq:rows2} trivially hold if $k=0$. Now, using \eqref{eq:modD}, we have $|D_k|\ge k^2+\frac{s_{n_1}^2}4\ge \max\{k|s_{n_1}|, \frac{s_{n_1}^2}{4}\}$ and $|D_0|\ge\frac{s_{n_1}^2}4$. Then, \eqref{eq:rows1} provides
\begin{align*}
  \|\Lambda_{n_1}^{-1}\pi_k\|_{\ell^1_\nu} \leq \frac1{4|D_k|} + \frac{k\beta_{n_1}}{4|D_k||D_0|} \leq \frac{1}{s_{n_1}^2}\Big(1 + \frac{\beta_{n_1}}{|s_{n_1}|}\Big).
\end{align*}
Similarly, using that $|a_{n_1}|\ge\frac{|s_{n_1}|}2$ and $\frac{|2k+\alpha+\frac12+is_{n_1}|}{|D_k|}\le\frac{2}{|k+a_{n_1}|} \leq \frac{4}{|s_{n_1}|}$, \eqref{eq:rows2} provides
\begin{align*}
  \|\partial_\omega\Lambda_{n_1}\Lambda_{n_1}^{-1}\pi_k\|_{\ell^1_\nu} \leq \frac{|n_1|}{|s_{n_1}|}\Big(2 + \frac{2\beta_{n_1}}{|s_{n_1}|} + \frac{4}{\nu-1}\Big).
\end{align*}
We conclude the proof using that $s_{n_1} = n_1 \bar\omega$, that $|n_1| \geq K_1+2$, and that $\frac{\beta_{n_1}}{|s_{n_1}|}$ is decreasing in $|n_1|$ by \eqref{eq:modD} and \eqref{eq:beta}.
\end{proof}

Finally, we estimate the norm of the columns satisfying $|n_1| \leq  K_1$ and $n_2 > K_2$. The idea is that the entries $j \leq K_2-2$ of such a column are linear combinations of the corresponding entries of the columns $K_2-1$ and $K_2$, which are controlled by \eqref{eq: finite number of columns} and \eqref{eq: finite number of columns derivative}. The remaining entries are controlled by Lemma~\ref{lem:rows}.
\begin{lemma}\label{lem:fixedn}
  Assume that \eqref{eq:H1} is satisfied and that $\eps_{n_1} < 1$ for all $|n_1| \leq K_1$. Moreover, let 
  \begin{equation}\label{eq:constants}
\begin{aligned}
\theta_1&:=\frac{(1+\nu)^2}{\nu^2(\nu-1)(K_2+1)}\Big(|\gamma_{K_1}'|+\frac{(1+\nu)(4K_2^2+K_1^2\bar\omega^2)}{16(K_2-1)}\Big),\\
\theta_2&:=\frac{(1+\nu)^2\big(4(K_2+1)^2+K_1^2\bar\omega^2\big)}{8\nu(\nu-1)K_2(K_2+1)},\\
\theta_3&:=\frac1{4(K_2-1)^2}\Big(1+\frac{\beta_{K_1}}{K_2+1}\Big),\qquad
\theta_3':=\frac1{K_2+1}\Big(1+\frac4{\nu-1}+\frac{\beta_{K_1}}{K_2-1}\Big).
\end{aligned}
\end{equation}
Then, recalling the definitions of  $\lambda_n$ and $\lambda'_n$ in \eqref{eq: finite number of columns} and \eqref{eq: finite number of columns derivative} respectively, for all $n = (n_1,n_2)$ with $|n_1| \leq K_1$ and $n_2 > K_2$, we have
\begin{align*}
  \|\Lambda^{-1}\pi_n\|_{\ell^1} &\leq \theta_1 \lambda_{(n_1,K_2-1)} + \theta_2 \lambda_{(n_1,K_2)} + \theta_3,\\
   \|\partial_\omega\Lambda\,\Lambda^{-1}\pi_n\|_{\ell^1} &\leq \theta_1 \lambda_{(n_1,K_2-1)}' + \theta_2 \lambda_{(n_1,K_2)}' + |n_1|\theta_3'.
\end{align*}
\end{lemma}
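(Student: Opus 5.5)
Fix $n=(n_1,n_2)$ with $|n_1|\le K_1$ and $k:=n_2>K_2$. Write $x:=H_{n_1}^{-1}e_k$, so that $\Lambda^{-1}\pi_n$ has entries $-\frac14x_j$, and $\|\Lambda^{-1}\pi_n\|_{\ell^1}=\frac14\sum_j\nu^{j-k}|x_j|$ after the $\nu_\theta$ weights cancel. The plan is to split the rows into high rows $K_2-1\le j\le k$ and low rows $j\le K_2-2$, and to treat each group separately.

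\textbf{High rows.} By Lemma~\ref{lem:transfer} and \eqref{eq:H1}, the contraction \eqref{eq:contraction} holds for all $j\ge K_2-1$. Lemma~\ref{lem:rows} therefore applies with $J=K_2-1$ and gives
\[
\frac{1}{4|D_k|}\Big(1+\frac{k\beta_{n_1}}{|D_{K_2-1}|}\Big).
\]
Using $|D_k|\ge k^2\ge(K_2+1)k$, $|D_{K_2-1}|\ge(K_2-1)^2$, $k\ge K_2+1$ and $\beta_{n_1}\le\beta_{K_1}$ (monotone in $|s_{n_1}|$), I expect this to be bounded by $\theta_3$, possibly after a slightly different bookkeeping of which factor receives $(K_2\pm1)$. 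For the derivative, \eqref{eq:rows2} with the same $J$ is handled similarly: use $|\sigma_k|/|D_k|\le2/|k+a_{n_1}|\le 2/k$ and $|J+a_{n_1}|\ge K_2-1$. This should give $|n_1|\theta_3'$.

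\textbf{Low rows.} For $j\le K_2-2$, the recursion of Lemma~\ref{lem:rec} is linear in $\zeta$. The low rows are determined by the single state vector $\zeta_{K_2-1}$, propagated downward by $M_{K_2-1},\dots,M_1$ with the same formula for $x_j$. The columns $K_2-1$ and $K_2$ of $H_{n_1}^{-1}$ produce states $\zeta_{K_2-2}$ equal to $(0,2(K_2-1)/D_{K_2-1})$ and $M_{K_2-1}(0,2K_2/D_{K_2})$ respectively. These span $\C^2$, because the first has zero even part while the second has even part $2K_2/D_{K_2}\ne0$.

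So I would write
\[
\zeta_{K_2-2}^{(k)}=c_1\,\zeta_{K_2-2}^{(K_2-1)}+c_2\,\zeta_{K_2-2}^{(K_2)},
\]
which gives $x^{(k)}_j=c_1x^{(K_2-1)}_j+c_2x^{(K_2)}_j$ for $j\le K_2-2$. Solving explicitly, $c_2$ is the even component of $\zeta^{(k)}_{K_2-2}$ divided by $2K_2/D_{K_2}$. The coefficient $c_1$ is obtained by matching the remaining component, which involves the diagonal entry of $M_{K_2-1}$ and hence $\gamma'$ and $D_{K_2-1}$.

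To bound $|\zeta^{(k)}_{K_2-2}|$, use \eqref{eq:state}: $|\zeta_{K_2-2}|_\nu\le\frac{4k}{(\nu-1)|D_k|}\big(\frac{1+\nu}2\big)^{k+1-K_2}$. Reweighting the low rows from $\nu^{j-k}$ to $\nu^{j-(K_2-1)}$ and $\nu^{j-K_2}$ produces factors $\nu^{K_2-1-k}$ and $\nu^{K_2-k}$. Combined with the geometric factor, this gives $(\frac{1+\nu}{2\nu})^{k-K_2}$ times powers of $\frac{1+\nu}{\nu}$. The remaining polynomial factors are $k|D_{K_2}|/|D_k|$ and $|D_{K_2-1}|$; bounding $|D_{K_2}|\le(K_2+1)^2+s_{K_1}^2/4$ yields the $(4(K_2+1)^2+K_1^2\bar\omega^2)$ terms in $\theta_1$ and $\theta_2$. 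Finally, the low-row sums of the reference columns are at most $\lambda_{(n_1,K_2-1)}$ and $\lambda_{(n_1,K_2)}$ from \eqref{eq: finite number of columns}. The same decomposition applies verbatim to $\partial_\omega\Lambda_{n_1}x$, whose low rows are also linear in $\zeta_j$ (see the proof of Lemma~\ref{lem:rows}); this gives the $\lambda'$ terms.

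\textbf{Main obstacle.} I expect the hardest step to be the explicit estimate of $|c_1|$ and $|c_2|$ in terms of $|\zeta|_\nu$. This requires inverting the $2\times2$ basis-change matrix, whose determinant is of size $K_2^2/|D_{K_2-1}D_{K_2}|$. The bounds must also be arranged so that the factor $k/|D_k|$ together with the geometric decay is maximized at $k=K_2+1$. I would first try to bound each coordinate of $\zeta$ by $(1+\frac{\nu-1}{2})|\zeta|_\nu$ and check that this reproduces the constants of \eqref{eq:constants}.
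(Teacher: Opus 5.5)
Your proposal is correct and follows the paper's proof essentially step for step. It treats the high rows $j\ge K_2-1$ by Lemma~\ref{lem:rows} with $J=K_2-1$, and the low rows by decomposing $\zeta^{(k)}_{K_2-2}$ in the basis coming from the columns $K_2-1$ and $K_2$, controlled through \eqref{eq:state} and the reference-column bounds $\lambda,\lambda'$. The step you flag as the main obstacle is immediate from the triangular form of that basis: $c_2=\frac{D_{K_2}}{2K_2}\zeta^{\rm e}$ and $c_1=\gamma_{n_1}'\zeta^{\rm e}+\frac{D_{K_2-1}}{2(K_2-1)}\zeta^{\rm o}$. With $|\zeta^{\rm e}|\le|\zeta|_\nu$, $|\zeta^{\rm o}|\le\frac{1+\nu}2|\zeta|_\nu$, $|D_{K_2-1}|\le K_2^2+\frac{K_1^2\bar\omega^2}4$ (this one feeds $\theta_1$, not $|D_{K_2}|$), $|D_{K_2}|\le(K_2+1)^2+\frac{K_1^2\bar\omega^2}4$ (feeding $\theta_2$) and $\big(\frac{1+\nu}{2\nu}\big)^{k-K_2+1}\le\big(\frac{1+\nu}{2\nu}\big)^2$, this reproduces exactly the constants in \eqref{eq:constants}.
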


\begin{proof}
Let $k := n_2 > K_2$ and let $B$ denote either $\Lambda_{n_1}^{-1}$ or $\partial_\omega\Lambda_{n_1}\Lambda_{n_1}^{-1}$. We need to estimate $\|B \pi_k\|_{\ell^1_\nu}$. Given $l \geq K_2-1$, let $x^{(l)} := H_{n_1}^{-1}e_l$ and let $\zeta^{(l)}_j$ be the associated vectors given in Lemma~\ref{lem:rec}. Using \eqref{eq:rec} and the proof of Lemma~\ref{lem:rows}, the entries $B_{jl}$, $j \leq K_2-2$, are obtained by applying to $\zeta^{(l)}_{K_2-2}$ a linear map which does not depend on $l$. Now, Lemma~\ref{lem:rec} provides
\[
\zeta^{(K_2-1)}_{K_2-2} = \Big(0,\frac{2(K_2-1)}{D_{K_2-1}}\Big)^{\!\top} \quad \text{ and } \quad \zeta^{(K_2)}_{K_2-2} = \frac{2K_2}{D_{K_2}}\Big(1,-\frac{2(K_2-1)\gamma_{n_1}'}{D_{K_2-1}}\Big)^{\!\top}.
\]
Decomposing $\zeta^{(k)}_{K_2-2}=:(\zeta^{\rm e},\zeta^{\rm o})^\top$ in this basis of $\C^2$, we obtain that for all $j\le K_2-2$
\begin{equation}\label{eq:twocol}
B_{jk}=\Big[\Big(\gamma_{n_1}'-\frac{D_{K_2-1}}{2(K_2-1)}\Big)\zeta^{\rm e}+\frac{D_{K_2-1}}{2(K_2-1)}(\zeta^{\rm e}+\zeta^{\rm o})\Big]B_{j,K_2-1}+\frac{D_{K_2}}{2K_2}\,\zeta^{\rm e}\,B_{j,K_2}.
\end{equation}
By Lemma~\ref{lem:transfer}, \eqref{eq:contraction} holds for all $j \geq K_2-1$, and therefore \eqref{eq:state} yields
\[
|\zeta^{(k)}_{K_2-2}|_\nu\le\frac{4k}{(\nu-1)|D_k|}\Big(\frac{1+\nu}2\Big)^{k-K_2+1} \leq \frac{4}{(\nu-1)(K_2+1)}\Big(\frac{1+\nu}2\Big)^{k-K_2+1},
\]
where we used that $|D_k| \geq k^2$ and $k \geq K_2+1$. Moreover, since $\alpha\le1$, \eqref{eq:modD} gives $|D_j|\le(j+1)^2+\frac{s_{n_1}^2}4$, with $|s_{n_1}|\le K_1\bar\omega$. Then, multiplying \eqref{eq:twocol} by $\nu^{j-k}$, summing over $j\le K_2-2$ and using that $(\frac{1+\nu}{2\nu})^{k-K_2+1} \leq (\frac{1+\nu}{2\nu})^{2}$, we get
\begin{equation}\label{eq:lowrows}
  \sum_{j=0}^{K_2-2}\nu^{j-k}|B_{jk}| \leq \theta_1\|B\pi_{K_2-1}\|_{\ell^1_\nu}+\theta_2\|B\pi_{K_2}\|_{\ell^1_\nu}.
\end{equation}
The norms $\|B\pi_{K_2-1}\|_{\ell^1_\nu}$ and $\|B\pi_{K_2}\|_{\ell^1_\nu}$ are bounded by \eqref{eq: finite number of columns} if $B = \Lambda_{n_1}^{-1}$, and by \eqref{eq: finite number of columns derivative} if $B = \partial_\omega\Lambda_{n_1}\Lambda_{n_1}^{-1}$. 

It remains to estimate the entries $B_{jk}$ with $K_2-1 \leq j \leq k$. For this purpose, we apply Lemma~\ref{lem:rows} with $J=K_2-1$. Using that $|D_k|\ge k^2 \geq (K_2+1)^2$, $|D_{K_2-1}| \geq (K_2-1)^2$, $|K_2-1+a_{n_1}|\ge K_2-1$, $\frac{|2k+\alpha+\frac12+is_{n_1}|}{|D_k|} \leq \frac{2}{k}$ and $\beta_{n_1} \leq \beta_{K_1}$, \eqref{eq:rows1} and \eqref{eq:rows2} imply that
\begin{equation}\label{eq:highrows}
  \sum_{j=K_2-1}^{k}\nu^{j-k}\big|\big(\Lambda_{n_1}^{-1}\big)_{jk}\big| \leq \theta_3 \quad \text{ and } \quad \sum_{j=K_2-1}^{k}\nu^{j-k}\big|\big(\partial_\omega\Lambda_{n_1}\Lambda_{n_1}^{-1}\big)_{jk}\big| \leq |n_1|\theta_3'.
\end{equation}
We conclude the proof by combining \eqref{eq:lowrows} and \eqref{eq:highrows}.
\end{proof}

Combining \eqref{eq: finite number of columns} with Lemmas~\ref{lem:T1} and~\ref{lem:fixedn}, we control all the columns of $\Lambda^{-1}$, which provides the desired bound on the norm of $\Lambda^{-1}\pi^{>N}$.

\begin{lemma}\label{lem:kappa}
Assume that \eqref{eq:H1} and \eqref{eq:H2} are satisfied and that $\eps_{n_1} < 1$ for all $|n_1| \leq K_1$. Moreover, let $\kappa$ be defined as 
\begin{equation}\label{eq:kappa}
  \kappa := \max\Big\{ \max_{n \leq K,\, n \not\leq N} \lambda_n, ~ \max_{|n_1| \leq K_1} \big(\theta_1 \lambda_{(n_1,K_2-1)} + \theta_2 \lambda_{(n_1,K_2)} + \theta_3\big), ~ \frac{\eta}{(K_1+2)^2}\Big\},
\end{equation}
where the maxima are taken over odd values of $n_1$. Then, $\Lambda^{-1}$ is a bounded linear operator on $\ell^1$ and $\|\Lambda^{-1}\pi^{>N}\|_{\ell^1} \leq \kappa$. 
\end{lemma}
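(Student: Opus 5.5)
The plan is to use the column characterization \eqref{eq:opnorm}. Since $\Lambda^{-1}$ is block diagonal in $n_1$, each column $\Lambda^{-1}\pi_n$ is supported in the single Fourier mode $n_1$. The weight $\nu_\theta^{|n_1|}$ therefore appears in both the numerator and the denominator of the column quotient and cancels. Hence $\|\Lambda^{-1}\pi_n\|_{\ell^1}=\|\Lambda_{n_1}^{-1}\pi_{n_2}\|_{\ell^1_\nu}$, and
\[
\|\Lambda^{-1}\pi^{>N}\|_{\ell^1}=\sup_{n\not\le N}\|\Lambda_{n_1}^{-1}\pi_{n_2}\|_{\ell^1_\nu},
\]
where $n_1$ ranges over odd integers. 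I will bound this supremum by splitting the indices $n\not\le N$ into three disjoint regions that cover all of them: (a) $n\le K$ with $n\not\le N$; (b) $|n_1|\le K_1$ and $n_2>K_2$; (c) $|n_1|>K_1$.

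In region (a), the bound comes directly from \eqref{eq: finite number of columns}, which requires $\eps_{n_1}<1$. The justification is that the $K_2$-section of $\Lambda_{n_1}^{-1}$ equals the inverse of $\Lambda_{n_1}^{(K_2)}$, by upper triangularity. The Neumann series then bounds that inverse by the columns of $X_{n_1}$, plus the stated error. So each such column is at most $\lambda_n$, and these are the values collected in the first maximum of \eqref{eq:kappa}.

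In region (b), Lemma~\ref{lem:fixedn} applies under \eqref{eq:H1} and $\eps_{n_1}<1$. It gives the bound $\theta_1\lambda_{(n_1,K_2-1)}+\theta_2\lambda_{(n_1,K_2)}+\theta_3$, which is the second entry of \eqref{eq:kappa}. In region (c), Lemma~\ref{lem:T1} applies under \eqref{eq:H2}. It gives $\|\Lambda_{n_1}^{-1}\|_{\ell^1_\nu}\le\eta/|n_1|^2\le\eta/(K_1+2)^2$, using that $|n_1|\ge K_1+2$ because both $n_1$ and $K_1$ are odd. Taking the maximum over the three regions gives $\|\Lambda^{-1}\pi^{>N}\|_{\ell^1}\le\kappa$.

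For boundedness of all of $\Lambda^{-1}$ on $\ell^1$, I also need the finitely many columns with $n\le N$. These lie in region (a), and each is bounded by $\lambda_n<\infty$. So $\sup_n\|\Lambda^{-1}\pi_n\|_{\ell^1}$ is finite. The map defined by these columns extends to a bounded operator on $\ell^1$, because finite sequences are dense and the $\ell^1$ operator norm equals the supremum of the column norms.

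The one point to check carefully is that this operator really is the inverse of $\Lambda$ on its natural domain $\Lambda^{-1}\ell^1$, and not merely a formal matrix. Each column of $\Lambda_{n_1}^{-1}$ is finitely supported, so $\Lambda\Lambda^{-1}e_n=e_n$ holds column by column. It extends to all of $\ell^1$ because $\Lambda$ acts entrywise, through finitely many terms per column. Apart from this, the argument is pure bookkeeping, and I expect no real obstacle: all the analytic work is contained in Lemmas~\ref{lem:T1} and~\ref{lem:fixedn}.
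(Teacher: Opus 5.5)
Your proposal is correct and matches the paper's proof: the column formula \eqref{eq:opnorm}, the same three regions bounded by \eqref{eq: finite number of columns}, Lemma~\ref{lem:fixedn} and Lemma~\ref{lem:T1} (with $|n_1|\ge K_1+2$ by oddness), and boundedness of all of $\Lambda^{-1}$ from the same uniform column bound. Your last paragraph goes beyond what the lemma asks, and its justification should be corrected: the rows of $\Lambda$ are infinite, so the identity $\Lambda\Lambda^{-1}g=g$ for general $g\in\ell^1$ rests on interchanging absolutely convergent sums, not on the columns of $\Lambda$ being finite (the entries of $\Lambda$ grow only linearly in the column index, while the columns of $\Lambda^{-1}$ are controlled in $\ell^1_\nu$ with $\nu>1$).
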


\begin{proof}
Using \eqref{eq:opnorm}, we have $\|\Lambda^{-1}\pi^{>N}\|_{\ell^1} = \sup_{n \not\leq N} \|\Lambda^{-1}\pi_n\|_{\ell^1}$. If $n \leq K$, then $\|\Lambda^{-1}\pi_n\|_{\ell^1} \leq \lambda_n$ by \eqref{eq: finite number of columns}. If $|n_1| \leq K_1$ and $n_2 > K_2$, we use Lemma~\ref{lem:fixedn}. Finally, if $|n_1| > K_1$, then $|n_1| \geq K_1 + 2$ and we use Lemma~\ref{lem:T1}. The same argument shows that the norms of all the columns of $\Lambda^{-1}$ are uniformly bounded, that is, $\Lambda^{-1}$ is bounded on $\ell^1$. 
\end{proof}

Note that $\Lambda_{-n_1} = \ov{\Lambda_{n_1}}$, hence the quantities $\lambda_n$ only need to be computed for $n_1 > 0$. Thus the bound $\kappa$ in \eqref{eq:kappa} requires the computation of $\frac{K_1+1}{2}$ matrices of size $K_2+1$.

\subsubsection{Columns computed exactly}\label{sec:exact} By \eqref{eq:Z12product}, the contribution of the column $n\not\le N$ to $Z_{1,2}$ is at most $\big(\|p\bar W^{p-1}\|_{\ell^1}+\frac1{2\nu_\theta}\big)\|\Lambda^{-1}\pi_n\|_{\ell^1}$, where $\|p\bar W^{p-1}\|_{\ell^1}=\|\bar V\|_{\ell^1}+\|p\bar W^{p-1}-\bar V\|_{\ell^1}$ is large (between $1.3\cdot10^5$ and $4\cdot10^5$ for the three cases of \eqref{eq:cases}). This bound is far from sharp near the turning points of the recursion of Lemma~\ref{lem:rec} (see Remark~\ref{rem:cost}), where the column $\Lambda_{n_1}^{-1}e_{n_2}$ and the Fourier modes of $p\bar W^{p-1}$ oscillate coherently, so that the norm of their product is much smaller than the product of their norms. For finitely many columns $n\le K$, we therefore compute the quotients of Lemma~\ref{lem:Z1} directly. Given $m_E\in\N$, we write $\bar V=\bar V_{\le m_E}+\bar V_{>m_E}$, where $\bar V_{\le m_E}$ contains the Fourier modes $|m_1|\le m_E$ of $\bar V$. For $n\le K$, \eqref{eq:tri} shows that $\Lambda^{-1}e_n$ is supported on the Fourier mode $n_1$, where it coincides with $\big(\Lambda_{n_1}^{(K_2)}\big)^{-1}e_{n_2}$, and the Neumann series argument of \eqref{eq: finite number of columns} gives $\big\|\big(\Lambda_{n_1}^{(K_2)}\big)^{-1}-X_{n_1}\big\|_{\ell^1_\nu}\le\frac{\eps_{n_1}}{1-\eps_{n_1}}\|X_{n_1}\|_{\ell^1_\nu}$. Moreover, $\ell(\Lambda^{-1}e_n)=0$ unless $|n_1|=1$. Consequently, for all $n\le K$,
\begin{multline}\label{eq:colMV}
\frac{|\ell(\Lambda^{-1}e_n)|+\|M_{p\bar W^{p-1}}\Lambda^{-1}e_n\|_{\ell^1}}{\|e_n\|_{\ell^1}}\le\frac{\|M_{\bar V_{\le m_E}}X_{n_1}e_{n_2}\|_{\ell^1}}{\|e_n\|_{\ell^1}}\\
+\Big(\|\bar V_{>m_E}\|_{\ell^1}+\|p\bar W^{p-1}-\bar V\|_{\ell^1}+\frac{\delta_{|n_1|,1}}{2\nu_\theta}\Big)\lambda_n+\|p\bar W^{p-1}\|_{\ell^1}\frac{\eps_{n_1}\|X_{n_1}\|_{\ell^1_\nu}}{1-\eps_{n_1}},
\end{multline}
where $X_{n_1}e_{n_2}$ is identified with the sequence supported on the Fourier mode $n_1$, and $\delta_{|n_1|,1}$ is the Kronecker symbol. The first term of the right-hand side is the norm of a finite sequence, the product of the polynomial $X_{n_1}e_{n_2}$ with the Fourier modes $|m_1|\le m_E$ of $\bar V$, which we compute with interval arithmetic by the fast Fourier transform. Given a threshold $\vartheta>0$, we use \eqref{eq:colMV} for the columns $n\not\le N$, $n\le K$, such that $\big(\|p\bar W^{p-1}\|_{\ell^1}+\frac1{2\nu_\theta}\big)\lambda_n>\vartheta$, and the bound \eqref{eq:Z12product}, column by column, for all the other columns $n\not\le N$. Following the proof of Lemma~\ref{lem:kappa}, we may then take for $Z_{1,2}$ the maximum of the right-hand sides of \eqref{eq:colMV} over the first set of columns and of $\big(\|p\bar W^{p-1}\|_{\ell^1}+\frac1{2\nu_\theta}\big)$ times the right-hand side of \eqref{eq:kappa}, in which the first maximum is restricted to the other columns. As for $\lambda_n$, the quotients only need to be computed for $n_1>0$: since $\bar W$ is real, the conjugation $\mathcal C$ of Remark~\ref{rem : the solution is real} maps the column $(n_1,n_2)$ to the column $(-n_1,n_2)$ and preserves the norms.

\subsection{Computation of the $Z_2$ bound}\label{sec:Z2}

In this section, we compute the bound $Z_2$ of Proposition~\ref{th : radii polynomial}. Since $F$ is polynomial, this bound follows from the Banach algebra property of $\ell^1_{\nu_\theta,\nu}$ and from bounds on the norms of the operators $\pi_\omega A$, $\pi_W A$, $\partial_\omega\Lambda\,\pi_W A$ and $\mathcal N^2\pi_WA$.

\begin{lemma}\label{lem:Z2}
Let $r > 0$ and let $Z_2(r)$ be a bound satisfying
\begin{align*}
  Z_2(r) \geq~ &p\sum_{j=1}^{p-1}\binom{p-1}{j}\|\bar W^{p-1-j}\|_{\ell^1}\|\pi_WA\|_{\mathcal{B}(X)}^{j+1}\,r^{j-1}+2\,\|\pi_\omega A\|_{\mathcal{B}(X)}\|\partial_\omega\Lambda\,\pi_W A\|_{\mathcal{B}(X)}\\
  &+\|\pi_\omega A\|_{\mathcal{B}(X)}^2\big(2\,\|\mathcal N^2\bar W\|_{\ell^1}+3\,\|\mathcal N^2\pi_WA\|_{\mathcal{B}(X)}\,r\big),
\end{align*}
with the convention $\|\bar W^0\|_{\ell^1}=1$. Then, $\|\left({D}{F}(\bar{U} + A h) - D{F}(\bar{U})\right)A\|_{\mathcal{B}(X)} \leq Z_2(r)r$ for all $h \in B_r(0) \subset X$.
\end{lemma}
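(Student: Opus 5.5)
Fix $r>0$ and $h=(\varsigma_h,g_h)\in B_r(0)\subset X$, and write $Ah=(\varsigma,w)$ with $\varsigma:=\pi_\omega Ah$ and $w:=\pi_WAh$. Then
\[
|\varsigma|\le\|\pi_\omega A\|_{\mathcal B(X)}\,r,\qquad \|w\|_{\ell^1}\le\|\pi_WA\|_{\mathcal B(X)}\,r .
\]
For an arbitrary direction $\Phi\in X$ with $\|\Phi\|_X\le1$, set $(\varsigma',h'):=A\Phi$. Then $|\varsigma'|\le\|\pi_\omega A\|$ and $\|h'\|_{\ell^1}\le\|\pi_WA\|$. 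By \eqref{eq:DF}, the first component of $\big(DF(\bar U+Ah)-DF(\bar U)\big)A\Phi$ is $\ell(h')-\ell(h')=0$, since $\ell$ is linear and independent of $U$. It therefore suffices to bound the second component in $\ell^1$ by $Z_2(r)r$, uniformly in $\Phi$.

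Expanding \eqref{eq:DF} at $\bar U+Ah=(\bar\omega+\varsigma,\bar W+w)$ and using \eqref{eq:dLam}, the second component is the sum of three differences. The first is the linear part, $\big(\Lambda(\bar\omega+\varsigma)-\Lambda\big)h'=\varsigma\,\partial_\omega\Lambda\,h'-\varsigma^2\mathcal N^2h'$. The second is the nonlinear part, $p\big((\bar W+w)^{p-1}-\bar W^{p-1}\big)*h'$. The third is the frequency part, $\varsigma'\big(\partial_\omega\Lambda(\bar\omega+\varsigma)(\bar W+w)-\partial_\omega\Lambda\bar W\big)$, which by \eqref{eq:DF} equals $\varsigma'\big(\partial_\omega\Lambda\,w-2\varsigma\mathcal N^2\bar W-2\varsigma\mathcal N^2w\big)$.

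Each term is bounded as follows.
\begin{itemize}
\item The nonlinear part: expand binomially and use the Banach algebra property. This gives $p\sum_{j\ge1}\binom{p-1}{j}\|\bar W^{p-1-j}\|\,\|w\|^j\|h'\|$, which is at most the first sum in $Z_2(r)$ times $r$.
\item The term $\varsigma\,\partial_\omega\Lambda h'$: here $h'=\pi_WA\Phi$, so it is bounded by $\|\pi_\omega A\|\,r\,\|\partial_\omega\Lambda\pi_WA\|$. Likewise $\varsigma'\partial_\omega\Lambda w$ is bounded by the same quantity. Together they produce the coefficient $2$.
\item The term $\varsigma'\cdot2\varsigma\mathcal N^2\bar W$: it gives $2\|\pi_\omega A\|^2\|\mathcal N^2\bar W\|r$.
\item The three-fold terms $\varsigma^2\mathcal N^2h'$ and $2\varsigma'\varsigma\mathcal N^2w$: each is $\le\|\pi_\omega A\|^2\|\mathcal N^2\pi_WA\|r^2$ times $1$ and $2$ respectively, giving the coefficient $3$.
\end{itemize}

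The main obstacle is checking that these operator expressions make sense on $X$. The operators $\partial_\omega\Lambda\pi_WA$ and $\mathcal N^2\pi_WA$ must be bounded. This follows from $A=A_N+\Lambda^{-1}\pi^{>N}$, since $A_N$ is finite rank, together with the boundedness of $\partial_\omega\Lambda\Lambda^{-1}$ and $\mathcal N^2\Lambda^{-1}$ (Lemmas~\ref{lem:kappa},~\ref{lem:T1},~\ref{lem:fixedn}). Boundedness is what allows the estimates to be applied to $w$ and $h'$, which lie in the range of $\pi_WA$. I take these norms as given finite quantities; the lemma only asserts the inequality in terms of them. Summing the bounds and taking the supremum over $\Phi$ yields the claim.
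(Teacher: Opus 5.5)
Your proof is correct and follows the paper's own argument. You write the second component of $\big(DF(\bar U+Ah)-DF(\bar U)\big)A\Phi$ via \eqref{eq:dLam} and \eqref{eq:DF} as the same sum of linear, nonlinear and frequency terms, and bound each one using the operator norms of $\pi_\omega A$, $\pi_WA$, $\partial_\omega\Lambda\,\pi_WA$ and $\mathcal N^2\pi_WA$, together with the binomial expansion in the Banach algebra; this recovers exactly the coefficients $2$, $2$ and $3$. The boundedness of $\partial_\omega\Lambda\,\Lambda^{-1}$ and $\mathcal N^2\Lambda^{-1}$ is stated in the paper as Lemma~\ref{lem:kappa prime}, but, as you note, the lemma itself only requires these norms to be finite.
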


\begin{proof}
Let $h \in B_r(0) \subset X$ and let $\Phi\in X$. Moreover, let us denote $(\varsigma,w):=Ah$ and $(\varsigma',w'):=A\Phi$, so that $\bar U+Ah=(\bar\omega+\varsigma,\bar W+w)$. Using \eqref{eq:DF} and \eqref{eq:dLam}, the first component of $\left({D}{F}(\bar{U} + A h) - D{F}(\bar{U})\right)A\Phi$ vanishes, and its second component is given by
\begin{equation}\label{eq:second derivative}
\varsigma\,\partial_\omega\Lambda w'-\varsigma^2\mathcal N^2w'+p\big((\bar W+w)^{p-1}-\bar W^{p-1}\big)*w'+\varsigma'\big(\partial_\omega\Lambda w-2\varsigma\,\mathcal N^2(\bar W+w)\big).
\end{equation}
Now, by definition of the operator norm, we have
\begin{align*}
|\varsigma|&\le \|\pi_\omega A\|_{\mathcal{B}(X)}\|h\|_X,& \|w\|_{\ell^1}&\le \|\pi_W A\|_{\mathcal{B}(X)}\|h\|_X,\\
 \|\partial_\omega\Lambda w\|_{\ell^1}&\le \|\partial_\omega\Lambda\,\pi_W A\|_{\mathcal{B}(X)}\|h\|_X,& \|\mathcal N^2w\|_{\ell^1}&\le \|\mathcal N^2\pi_W A\|_{\mathcal{B}(X)}\|h\|_X,
\end{align*}
and the same estimates hold for $(\varsigma',w')$ and $\Phi$. Moreover, writing $(\bar W+w)^{p-1}-\bar W^{p-1}=\sum_{j=1}^{p-1}\binom{p-1}{j}\bar W^{p-1-j}*w^j$, where $\bar W^0*w^{p-1}=w^{p-1}$, and using the Banach algebra property of $\ell^1_{\nu_\theta,\nu}$, we get
\[
\|p\big((\bar W+w)^{p-1}-\bar W^{p-1}\big)\|_{\ell^1}\le p\sum_{j=1}^{p-1}\binom{p-1}{j}\|\bar W^{p-1-j}\|_{\ell^1}\|w\|_{\ell^1}^j.
\]
Consequently, using that $\|h\|_X \leq r$, the $\ell^1$ norm of \eqref{eq:second derivative} is bounded by $Z_2(r)\|h\|_X\|\Phi\|_X \leq Z_2(r)r\|\Phi\|_X$.
\end{proof}

The norms $\|\bar W^m\|_{\ell^1}$, $1\le m\le p-2$, are norms of finite sequences, computed as for \eqref{eq:Ybound}; they can be much smaller than $\|\bar W\|_{\ell^1}^m$. In order to apply the above lemma, it remains to compute the norm of the operators $\pi_\omega A$, $\pi_W A$, $\partial_\omega\Lambda\,\pi_W A$ and $\mathcal N^2\pi_WA$. Using \eqref{eq:A} and \eqref{eq:opnorm}, we have
\begin{equation}\label{eq:normsA}
\begin{aligned}
  \|\pi_\omega A\|_{\mathcal{B}(X)} &= \|\pi_\omega A_N \pi^{\leq N}\|_{\mathcal{B}(X)},\\
  \|\pi_W A\|_{\mathcal{B}(X)} &= \max\big\{\|\pi_W A_N \pi^{\leq N}\|_{\mathcal{B}(X)}, ~ \|\Lambda^{-1}\pi^{>N}\|_{\ell^1}\big\},\\
  \|\partial_\omega\Lambda\,\pi_W A\|_{\mathcal{B}(X)} &= \max\big\{\|\partial_\omega\Lambda\,\pi_W A_N \pi^{\leq N}\|_{\mathcal{B}(X)}, ~ \|\partial_\omega\Lambda\,\Lambda^{-1}\pi^{>N}\|_{\ell^1}\big\},\\
  \|\mathcal N^2\pi_W A\|_{\mathcal{B}(X)} &= \max\big\{\|\mathcal N^2\pi_W A_N \pi^{\leq N}\|_{\mathcal{B}(X)}, ~ \|\mathcal N^2\Lambda^{-1}\pi^{>N}\|_{\ell^1}\big\}.
\end{aligned}
\end{equation}
By \eqref{eq:tri}, the quantities involving $A_N$ in \eqref{eq:normsA} are matrix norms, computed with interval arithmetic. Moreover, $\|\Lambda^{-1}\pi^{>N}\|_{\ell^1}$ is controlled by Lemma~\ref{lem:kappa}. The two remaining quantities are obtained similarly, using the analysis of Section~\ref{sec:main}. Specifically, we have the following result.

\begin{lemma}\label{lem:kappa prime}
Assume that \eqref{eq:H1} and \eqref{eq:H2} are satisfied and that $\eps_{n_1} < 1$ for all $|n_1| \leq K_1$. Moreover, let $\kappa'$ and $\kappa''$ be defined as 
\begin{equation}\label{eq:kappa prime}
\begin{aligned}
  \kappa' &:= \max\Big\{ \max_{n \leq K,\, n \not\leq N} \lambda_n', ~ \max_{|n_1| \leq K_1} \big(\theta_1 \lambda_{(n_1,K_2-1)}' + \theta_2 \lambda_{(n_1,K_2)}' + |n_1|\theta_3'\big), ~ \eta'\Big\},\\
  \kappa'' &:= \max\Big\{ \max_{n \leq K,\, n \not\leq N} n_1^2\lambda_n, ~ \max_{|n_1| \leq K_1} n_1^2\big(\theta_1 \lambda_{(n_1,K_2-1)} + \theta_2 \lambda_{(n_1,K_2)} + \theta_3\big), ~ \eta\Big\},
\end{aligned}
\end{equation}
where the maxima are taken over odd values of $n_1$. Then, $\partial_\omega\Lambda\,\Lambda^{-1}$ and $\mathcal N^2\Lambda^{-1}$ are bounded linear operators on $\ell^1$. Moreover, $\|\partial_\omega\Lambda\,\Lambda^{-1}\pi^{>N}\|_{\ell^1} \leq \kappa'$ and  $\|\mathcal N^2\Lambda^{-1}\pi^{>N}\|_{\ell^1} \leq \kappa''$.
\end{lemma}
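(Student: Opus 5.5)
The proof follows the proof of Lemma~\ref{lem:kappa}, with $\Lambda^{-1}$ replaced first by $\partial_\omega\Lambda\,\Lambda^{-1}$ and then by $\mathcal N^2\Lambda^{-1}$. Both operators are diagonal in $n_1$ and upper triangular in $n_2$, by \eqref{eq:tri}. By \eqref{eq:opnorm}, each operator norm is therefore the supremum of its column norms. For $\pi^{>N}$ the supremum is taken over $n\not\le N$, and for boundedness on all of $\ell^1$ it is taken over all $n$. We split the columns into the same three regions as before: $n\le K$; $|n_1|\le K_1$ with $n_2>K_2$; and $|n_1|>K_1$.

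For $\partial_\omega\Lambda\,\Lambda^{-1}$ the three regions are handled as follows.
\begin{itemize}
\item For $n\le K$, the column norm is at most $\lambda_n'$ by \eqref{eq: finite number of columns derivative}. This uses $\eps_{n_1}<1$ and the fact that, by triangularity, the section of order $K_2$ of $\Lambda_{n_1}^{-1}$ is the inverse of $\Lambda_{n_1}^{(K_2)}$.
\item For $|n_1|\le K_1$ and $n_2>K_2$, the second estimate of Lemma~\ref{lem:fixedn} gives the bound $\theta_1\lambda'_{(n_1,K_2-1)}+\theta_2\lambda'_{(n_1,K_2)}+|n_1|\theta_3'$. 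This needs \eqref{eq:H1}.
\item For $|n_1|>K_1$, Lemma~\ref{lem:T1} gives the bound $\eta'$ uniformly in $n_1$. This needs \eqref{eq:H2}.
\end{itemize}
Taking the maximum over the columns with $n\not\le N$ gives $\kappa'$. Over all columns, the first region is a finite set, the second involves finitely many $n_1$, and the third is uniform. The supremum is therefore finite, which proves boundedness.

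For $\mathcal N^2\Lambda^{-1}$, note that $\mathcal N^2$ multiplies the block $n_1$ by $n_1^2$. Each column norm is therefore $n_1^2\|\Lambda^{-1}\pi_n\|_{\ell^1}$.
\begin{itemize}
\item In the first region we get $n_1^2\lambda_n$.
\item In the second region, the first estimate of Lemma~\ref{lem:fixedn} multiplied by $n_1^2$ gives $n_1^2\big(\theta_1\lambda_{(n_1,K_2-1)}+\theta_2\lambda_{(n_1,K_2)}+\theta_3\big)$.
\item In the third region, Lemma~\ref{lem:T1} gives $\|\Lambda_{n_1}^{-1}\|_{\ell^1_\nu}\le\eta/n_1^2$, so the column norms are at most $\eta$.
\end{itemize}
Taking maxima gives $\kappa''$, and boundedness follows exactly as for $\partial_\omega\Lambda\,\Lambda^{-1}$.

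The only point needing care is the definition of these operators on columns. The inverse $\Lambda^{-1}$ acts blockwise on $\ell^1_\nu$ in each Fourier mode. Each column of $\Lambda_{n_1}^{-1}$ has finite support, so $\partial_\omega\Lambda_{n_1}\Lambda_{n_1}^{-1}$ is a well-defined upper triangular matrix, and the column norm identity \eqref{eq:opnorm} applies to it. The weights $\nu_\theta^{|n_1|}$ appear both in the column and in $\|e_n\|_{\ell^1}$ in \eqref{eq:opnorm}, so they cancel. The block norms in $\ell^1_\nu$ from the earlier lemmas then transfer directly to $\ell^1$.
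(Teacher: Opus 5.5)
Your proof is correct and matches the paper's argument. It repeats the column-wise proof of Lemma~\ref{lem:kappa} over the same three regions: for $\partial_\omega\Lambda\,\Lambda^{-1}$ it uses \eqref{eq: finite number of columns derivative} and the second estimates of Lemmas~\ref{lem:T1} and~\ref{lem:fixedn}, and for $\mathcal N^2\Lambda^{-1}$ the identity $\mathcal N^2\Lambda^{-1}\pi_n=n_1^2\Lambda^{-1}\pi_n$. Your remarks on the cancellation of the $\nu_\theta$ weights and on the column-by-column well-definedness of $\partial_\omega\Lambda_{n_1}\Lambda_{n_1}^{-1}$ are correct details that the paper leaves implicit.
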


\begin{proof}
The proof for $\partial_\omega\Lambda\,\Lambda^{-1}$ is the same as that of Lemma~\ref{lem:kappa}, using \eqref{eq: finite number of columns derivative} and the second estimates of Lemmas~\ref{lem:T1} and~\ref{lem:fixedn}. For $\mathcal N^2\Lambda^{-1}$, we use that $\mathcal N^2\Lambda^{-1}\pi_n=n_1^2\Lambda^{-1}\pi_n$ for all $n$, combined with the proof of Lemma~\ref{lem:kappa}.
\end{proof}

\subsection{Proof of Proposition~\ref{prop:profiles}}\label{sec:result}

In this section, we combine the results of the previous sections in order to apply Proposition~\ref{th : radii polynomial} and prove Proposition~\ref{prop:profiles}. We first prove that $A$ is injective if $Z_{1}<1$.

\begin{lemma}\label{lem:setting}
Under the assumptions of Lemma~\ref{lem:kappa}, let $Z_{1}$ be given by Lemma~\ref{lem:Z1}. If $Z_{1} < 1$, then $A$, given in \eqref{eq:A}, is injective on $X$.
\end{lemma}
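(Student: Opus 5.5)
The plan is to deduce injectivity of $A$ from the fact that $DF(\bar U)A$ is invertible on $X$. By Lemma~\ref{lem:kappa} (and Lemma~\ref{lem:kappa prime}), $\Lambda^{-1}$, $\partial_\omega\Lambda\,\Lambda^{-1}$ and $\mathcal N^2\Lambda^{-1}$ are bounded on $\ell^1$, and $A_N$ has finite rank. So $A$ maps $X$ boundedly into $X_1=\C\times\Lambda^{-1}\ell^1$, and $DF(\bar U):X_1\to X$ is bounded, with the explicit form \eqref{eq:DF}. Hence $DF(\bar U)A$ is a well-defined bounded operator on $X$. By Lemma~\ref{lem:Z1}, $\|I-DF(\bar U)A\|_{\mathcal B(X)}\le Z_1<1$, so a Neumann series argument shows that $DF(\bar U)A$ is invertible on $X$, with inverse $\sum_{k\ge0}(I-DF(\bar U)A)^k$.

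Given this, suppose $A\Phi=0$ for some $\Phi\in X$. Then $DF(\bar U)A\Phi=DF(\bar U)0=0$, and invertibility forces $\Phi=0$.

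The only point that needs care is that $DF(\bar U)$ is only defined on $X_1$, not on $X$. So I would check that $A\Phi\in X_1$ for every $\Phi\in X$. On the high modes this holds because $\Lambda\Lambda^{-1}\pi^{>N}\Phi=\pi^{>N}\Phi\in\ell^1$. On the low modes it holds because $\pi_WA_N\Phi$ is a finite sequence: by \eqref{eq:tri}, $\Lambda$, $\partial_\omega\Lambda$ and $\mathcal N^2$ map finite sequences to finite sequences. I would also check that the composition $DF(\bar U)A$ is precisely the operator estimated in Lemma~\ref{lem:Z1}, which holds because that lemma used exactly \eqref{eq:DF} on the columns of $A$. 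I do not expect any real obstacle; the lemma is essentially the standard remark that $Z_1<1$ gives injectivity.
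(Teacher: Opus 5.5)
Your proposal is correct and follows the paper's proof exactly: $Z_1<1$ gives invertibility of $DF(\bar U)A$ on $X$ by a Neumann series, and this forces $A$ to be injective. Your extra check that $A$ maps $X$ into $X_1$, so that $DF(\bar U)A$ is well defined, is a useful detail; the paper handles it in the setup paragraph following Remark~\ref{rem : the solution is real} rather than in the proof itself.
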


\begin{proof}
If $Z_1 < 1$, then using a Neumann series argument, $DF(\bar U) A : X \to X$ is invertible, hence $A$ is injective on $X$.
\end{proof}

We are now in a position to state the computer-assisted part of the proof of Proposition~\ref{prop:profiles}. For each of the pairs $(d,p)$ in \eqref{eq:cases}, the approximate solution $\bar U = (\bar\omega, \bar W)$ is the one stored in \cite{code}, obtained as described in Section~\ref{sec:profiles}. It is real, $\bar W$ is supported on the Fourier modes $|n_1| \leq N_{W,1}$ and the Chebyshev modes $n_2 \leq N_{W,2}$, and
\begin{equation}\label{eq:omega bar}
  |\bar\omega - \omega_0| \leq 10^{-15},
\end{equation}
where $N_W = (N_{W,1},N_{W,2})$ and $\omega_0$ are given in Table~\ref{tab:proof}.

\begin{table}[t]
\centering
\begin{tabular}{|c|c|c|c|}
\hline
$(d,p)$ & $(9,3)$ & $(7,5)$ & $(6,7)$\\
\hline
$N_W$ & $(103,1290)$ & $(101,600)$ & $(145,600)$\\
$\omega_0$ & $40.260236233935841$ & $24.068215567617913$ & $19.908444486656099$\\
$N$ & $(65,1400)$ & $(65,900)$ & $(95,1000)$\\
$r_0$ & $2.7\cdot10^{-18}$ & $3.6\cdot10^{-14}$ & $3.6\cdot10^{-12}$\\
$r_1$ & $5.4\cdot10^{-13}$ & $3.1\cdot10^{-10}$ & $4.7\cdot10^{-10}$\\
$\delta_\omega$ & $1.39\cdot10^{-16}$ & $4.93\cdot10^{-13}$ & $3.12\cdot10^{-11}$\\
$\delta_W$ & $2.30\cdot10^{-14}$ & $3.71\cdot10^{-12}$ & $1.25\cdot10^{-10}$\\
\hline
\end{tabular}
\medskip
\caption{Approximate solutions and parameters of Proposition~\ref{th:proof}.}
\label{tab:proof}
\end{table}

\begin{proposition}\label{th:proof}
Let $(d,p)$ be one of the pairs in \eqref{eq:cases}, let $\nu = 1.1$ and $\nu_\theta = 1.01$, and let $N$, $r_0$, $r_1$, $\delta_\omega$ and $\delta_W$ be as in Table~\ref{tab:proof}. Then, there exists a finite-rank operator $A_N = \pi^{\leq N}A_N \pi^{\leq N}$ such that, with $A$ given in \eqref{eq:A}, the following holds. For all $r \in [r_0, r_1]$, there exists a unique $\tilde\Psi \in B_r(0) \subset X$ such that $\tilde U = (\tilde \omega, \tilde W) = \bar U + A\tilde\Psi$ satisfies $F(\tilde U) = 0$. Moreover, $\tilde U$ is real and
\begin{equation}\label{eq:enclosure}
  |\tilde\omega - \bar\omega| \leq \delta_\omega \quad \text{ and } \quad \|\tilde W - \bar W\|_{\ell^1} \leq \delta_W.
\end{equation}
\end{proposition}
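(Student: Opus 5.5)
The plan is to apply Proposition~\ref{th : radii polynomial} with $X_1=\C\times\Lambda(\bar\omega)^{-1}\ell^1$, $X_2=X$, the approximate solution $\bar U$ stored in \cite{code}, and $A=A_N+\Lambda^{-1}\pi^{>N}$ as in \eqref{eq:A}. For $A_N$ we take a floating-point approximate inverse of the Galerkin matrix $\pi^{\le N}DF_{\bar V}(\bar U)\pi^{\le N}$. We build it blockwise, with the $\omega$-row and column handled by a bordering step, and symmetrize it so that $\mathcal CA_N=A_N\mathcal C$; then $\mathcal CA=A\mathcal C$, because $\Lambda_{-n_1}(\bar\omega)=\ov{\Lambda_{n_1}(\bar\omega)}$.

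\textbf{Bounds and verification.} We first check, in interval arithmetic using \eqref{eq:omega bar}, that \eqref{eq:H1} and \eqref{eq:H2} hold for the chosen $K$. Next we compute the upper triangular approximate inverses $X_{n_1}$ for $0<n_1\le K_1$ and verify $\eps_{n_1}<1$. This yields the enclosures $\lambda_n,\lambda_n'$ and hence $\kappa,\kappa',\kappa''$ through Lemmas~\ref{lem:kappa} and~\ref{lem:kappa prime}; in particular $\Lambda^{-1}$ is bounded, so $X_1$ is a legitimate Banach space. With these in hand we enclose the remaining bounds:
\begin{itemize}
\item $Y$ via \eqref{eq:Ybound}, computed by an FFT of $\bar W^p$ in interval arithmetic;
\item $Z_{1,0}$ and $Z_{1,1}$ as finite matrix and sequence norms (Lemma~\ref{lem:Z1});
\item $Z_{1,2}$ by the mixed strategy of Section~\ref{sec:exact}, computing the columns above the threshold $\vartheta$ exactly via \eqref{eq:colMV};
\item $Z_2(r)$ by Lemma~\ref{lem:Z2}, using \eqref{eq:normsA}.
\end{itemize}
Since $Z_2$ is nondecreasing, we check $Z_1+Z_2(r_1)r_1<1$. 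The left inequality in \eqref{condition radii polynomial} is a quadratic condition in $r$ (with $Z_2$ evaluated at $r_1$, an upper bound on $[r_0,r_1]$). If it holds at both $r_0$ and $r_1$, convexity gives it on all of $[r_0,r_1]$. Lemma~\ref{lem:setting} then gives injectivity of $A$, and Proposition~\ref{th : radii polynomial} yields, for each $r\in[r_0,r_1]$, a unique $\tilde\Psi\in B_r(0)$ with $F(\bar U+A\tilde\Psi)=0$. Reality of $\tilde U$ follows from Remark~\ref{rem : the solution is real}.

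\textbf{Enclosure.} We use the smallest radius $r_0$: $|\tilde\omega-\bar\omega|\le\|\pi_\omega A\|_{\mathcal B(X)}r_0$ and $\|\tilde W-\bar W\|_{\ell^1}\le\|\pi_WA\|_{\mathcal B(X)}r_0$. These give $\delta_\omega,\delta_W$ once the norms in \eqref{eq:normsA} are enclosed and checked against Table~\ref{tab:proof}. Uniqueness for every $r$ up to $r_1$ just records that the zero found at $r_0$ is the only one in the larger ball.

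\textbf{Main obstacle.} I expect the hardest step to be making $Z_1<1$, specifically $Z_{1,2}$. The factor $\|p\bar W^{p-1}\|_{\ell^1}$ is of order $10^5$, so the crude product bound \eqref{eq:Z12product} forces $\kappa\lesssim10^{-6}$. That cannot hold near the turning-point columns with $n_1$ moderately above $N_1$. The remedy is to enlarge $K$, compute many columns exactly through \eqref{eq:colMV} while exploiting the cancellation in $M_{\bar V_{\le m_E}}X_{n_1}e_{n_2}$, and choose $N$ large enough that $\eta/(K_1+2)^2$ and the $\theta_i$-terms of Lemma~\ref{lem:fixedn} are negligible. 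The computational cost, namely $(K_1+1)/2$ triangular inversions of size $K_2+1\gtrsim6662$ together with the $A_N$ inversion of dimension about $N_1N_2$, is the practical bottleneck, so I would tune $N,K,m_E,\vartheta$ by trial.
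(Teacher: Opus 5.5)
Your proposal is correct and is essentially the paper's own proof: the same $X_1,X_2$, the same $A$ with a $\mathcal C$-symmetrized finite part (the paper takes $A_N=\frac12(B_N+\mathcal CB_N\mathcal C)$, with $B_N$ a numerical inverse of $\pi^{\le N}DF(\bar U)\pi^{\le N}$ on real elements), the checks of \eqref{eq:H1}--\eqref{eq:H2} and $\eps_{n_1}<1$, the bounds of Lemmas~\ref{lem:Z1}, \ref{lem:kappa}, \ref{lem:kappa prime} and~\ref{lem:Z2} with Section~\ref{sec:exact} for $Z_{1,2}$, the convexity check at $r_0,r_1$ together with $Z_1+Z_2r_1<1$, Remark~\ref{rem : the solution is real} for reality, and the enclosure from $\|\tilde\Psi\|_X\le r_0$. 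Your deviations (inverting the $DF_{\bar V}$ Galerkin matrix instead of $DF$, and freezing $Z_2$ at $r_1$ instead of on $r\le10^{-6}$) are harmless. Two corrections to your heuristic last paragraph: $\eta/(K_1+2)^2$ and the $\theta_i$ are governed by $K$, not $N$, and with the paper's $N$ the crude bound \eqref{eq:Z12product} is already below $1$ (Remark~\ref{rem:cost}), so the exact columns serve only to bring $Z_{1,2}$ down to $0.8$.
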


\begin{table}[t]
\centering
\begin{tabular}{|c|c|c|c|}
\hline
$(d,p)$ & $(9,3)$ & $(7,5)$ & $(6,7)$\\
\hline
$K$ & $(165,10500)$ & $(203,7000)$ & $(205,8000)$\\
$\|\bar W\|_{\ell^1}$ & $842.35$ & $35.256$ & $16.833$\\
$\|p\bar W^{p-1}\|_{\ell^1}$ & $3.9733\cdot10^{5}$ & $1.3362\cdot10^{5}$ & $2.2907\cdot10^{5}$\\
$\|\Lambda^{-1}\pi^{>N}\|_{\ell^1}$ & $2.134\cdot10^{-6}$ & $6.432\cdot10^{-6}$ & $4.002\cdot10^{-6}$\\
$Y$ & $5.39\cdot10^{-19}$ & $7.23\cdot10^{-15}$ & $6.29\cdot10^{-13}$\\
$Z_{1,0}$ & $1.81\cdot10^{-4}$ & $1.62\cdot10^{-7}$ & $5.49\cdot10^{-8}$\\
$Z_{1,1}$ & $0.3443$ & $0.2711$ & $0.3586$\\
$Z_{1,2}$ & $0.7995$ & $0.7987$ & $0.8000$\\
$\|\pi_\omega A\|_{\mathcal B(X)}$ & $51.20$ & $13.69$ & $8.663$\\
$\|\pi_W A\|_{\mathcal B(X)}$ & $8510$ & $102.8$ & $34.54$\\
$Z_2(r)$, $r\le10^{-6}$ & $3.674\cdot10^{11}$ & $6.434\cdot10^{8}$ & $4.219\cdot10^{8}$\\
\hline
\end{tabular}
\medskip
\caption{The main bounds of the proof of Proposition~\ref{th:proof}. Each entry is an upper bound, except $K$, which is a parameter, and $\|\bar W\|_{\ell^1}$, which is given to five significant digits.}
\label{tab:bounds}
\end{table}

\begin{proof}
We choose $K$ as in Table~\ref{tab:bounds} and $\bar V := \pi^{\leq N + (10,10)}(p\bar W^{p-1})$. Since $\bar U$ is real, the operator $\pi^{\leq N}DF(\bar U)\pi^{\leq N}$ maps the real elements of $\pi^{\leq N}X$ to real elements, on which it is a real matrix of size $(N_1+1)(N_2+1)+1$. Let $B_N$ be the floating-point approximate inverse of this matrix computed in \cite{code}, and let $A_N := \frac{1}{2}(B_N + \mathcal C B_N \mathcal{C})$ be the complex linear operator on $\pi^{\leq N}X$ which coincides with $B_N$ on the real elements. Then $\mathcal{C}A = A\mathcal{C}$, where $\mathcal{C}$ is defined in Remark~\ref{rem : the solution is real}. All the quantities below are computed in \cite{code} using interval arithmetic \cite{julia_interval, julia_olivier}, and the coefficients of $\bar W^{p-1}$ and $\bar W^p$ are computed by the fast Fourier transform, with $192$ bits of precision.

First, we verify that \eqref{eq:H1} and \eqref{eq:H2} are satisfied, and we construct the matrices $X_{n_1}$ of Section~\ref{sec:main}, which satisfy $\eps_{n_1} \leq 5\cdot 10^{-11}$ for all $|n_1| \leq K_1$. Then, Lemma~\ref{lem:kappa} yields the bound on $\|\Lambda^{-1}\pi^{>N}\|_{\ell^1}$ of Table~\ref{tab:bounds}, and Lemma~\ref{lem:kappa prime} provides bounds on $\|\partial_\omega\Lambda\,\Lambda^{-1}\pi^{>N}\|_{\ell^1}$ and $\|\mathcal N^2\Lambda^{-1}\pi^{>N}\|_{\ell^1}$. Using \eqref{eq:Ybound} and Lemma~\ref{lem:Z1}, where $Z_{1,2}$ is computed as explained in Section~\ref{sec:exact} with $\vartheta = 0.8$ and $m_E = 30$, we can choose $Y$, $Z_{1,0}$, $Z_{1,1}$ and $Z_{1,2}$ as in Table~\ref{tab:bounds}, so that
\[
  Z_1 = \max\{Z_{1,0}+Z_{1,1},\, Z_{1,2}\} = Z_{1,2} < 1.
\]
In particular, $A$ is injective by Lemma~\ref{lem:setting}. Then, \eqref{eq:normsA} provides the bounds on $\|\pi_\omega A\|_{\mathcal{B}(X)}$ and $\|\pi_W A\|_{\mathcal{B}(X)}$ of Table~\ref{tab:bounds}, as well as bounds on $\|\partial_\omega\Lambda\,\pi_W A\|_{\mathcal{B}(X)}$ and $\|\mathcal N^2\pi_W A\|_{\mathcal{B}(X)}$, and Lemma~\ref{lem:Z2} allows us to choose $Z_2$ non-decreasing such that $Z_2(r)$ is given in Table~\ref{tab:bounds} for all $r \leq 10^{-6}$. With these values, one readily verifies that \eqref{condition radii polynomial} is satisfied for all $r \in [r_0, r_1]$: the radii polynomial is convex and negative at $r_0$ and at $r_1$, and $Z_1+Z_2r_1<1$. Proposition~\ref{th : radii polynomial} then provides the existence and the uniqueness of $\tilde \Psi$. Since $\bar U$ is real and $\mathcal{C}A = A\mathcal{C}$, Remark~\ref{rem : the solution is real} implies that $\tilde U$ is real. Finally, since $\|\tilde \Psi\|_X \leq r_0$, we obtain
\[
  |\tilde\omega - \bar\omega| \leq \|\pi_\omega A\|_{\mathcal{B}(X)}\,r_0 \leq \delta_\omega \quad \text{ and } \quad \|\tilde W - \bar W\|_{\ell^1} \leq \|\pi_W A\|_{\mathcal{B}(X)}\,r_0 \leq \delta_W. \qedhere
\]
\end{proof}

\begin{remark}\label{rem:cost}
The size $N$ of the finite part is dictated by $Z_{1,2}$. Indeed, for each Fourier mode $n_1$, the norms of the columns of $\Lambda_{n_1}^{-1}$ have a sharp maximum at the turning point $n_2 \approx \frac12|n_1|\bar\omega$ of the recursion of Lemma~\ref{lem:rec}, and \eqref{eq:Z12product} is smaller than $1$ only if $\pi^{\leq N}$ contains all the columns whose norm exceeds $1/\|p\bar W^{p-1}\|_{\ell^1}$. With the values of Table~\ref{tab:bounds}, \eqref{eq:Z12product} is at most $0.85$, $0.86$ and $0.92$ for $(9,3)$, $(7,5)$ and $(6,7)$ respectively. The columns just outside $\pi^{\leq N}$ for which \eqref{eq:Z12product} exceeds $\vartheta = 0.8$ ($86$, $56$ and $1008$ columns) are computed using \eqref{eq:colMV}, and their quotients in Lemma~\ref{lem:Z1} are at most $0.048$, so that $Z_{1,2} \leq 0.8$. The cost of the proof is dominated by the finite part, and the proofs took $9.4$, $4.2$ and $14.5$ hours on $32$ cores. For the profiles of smaller dimensions mentioned in Remark~\ref{rem:intro}, the Chebyshev coefficients decay more slowly, which imposes $\nu < 1.05$. Then $\|p\bar W^{p-1}\|_{\ell^1}$ exceeds $10^6$, and $Z_{1,2} < 1$ would require a finite part of dimension $10^6$ or more.
\end{remark}

\begin{proof}[Proof of Proposition~\ref{prop:profiles}]
Let $\tilde U = (\tilde\omega, \tilde W)$ be given by Proposition~\ref{th:proof}, and let $\omega^* := \tilde\omega$ and
\[
W^*(\theta,\rho):=\sum_{n \in \Z\times\N_0}\tilde W_ne^{in_1\theta}T_{n_2}(2\rho^2-1).
\]
Since $\tilde W \in \ell^1$ with $\nu_\theta > 1$ and $\nu > 1$, the above series converges absolutely and locally uniformly, together with all its derivatives, for $|\mathrm{Im}(\theta)| < \log \nu_\theta$ and $2\rho^2-1 \in \Ell_\nu$. In particular, $W^*$ is real-analytic on $\R\times(-\rho_\nu,\rho_\nu)$, where $\rho_\nu > 1.0011=\rho_*$ is given in \eqref{eq:rhonu}, it is $2\pi$-periodic in $\theta$ and even in $\rho$. Moreover, $W^*$ is real-valued since $\tilde U$ is real, and $W^*(\theta+\pi,\rho)=-W^*(\theta,\rho)$ since $\tilde W_n = 0$ if $n_1$ is even. Now, differentiating the series term by term, $F(\tilde U) = 0$ implies that \eqref{eq:modes} is satisfied with $\omega = \omega^*$, that is, $W^*$ solves \eqref{eq:ssw}. Moreover, \eqref{eq:enclosure} yields $\omega^* > 0$ and, since $\|\bar W\|_{\ell^1} > \delta_W$ (see Tables~\ref{tab:proof} and~\ref{tab:bounds}), we have $\tilde W \neq 0$ and $W^*$ is not identically zero. Thus $W^*$ is a DSS profile of frequency $\omega^*$ in the sense of Definition~\ref{def:profile}, real-analytic on $\R\times(-\rho_*,\rho_*)$. The bound \eqref{eq:profilebound} is proved in Remark~\ref{rem:size} below. Finally, \eqref{eq:enclosure} and \eqref{eq:omega bar} provide \eqref{eq:omega}, since $\omega_{d,p}$ and the value of $\omega_0$ in Table~\ref{tab:proof} differ by less than $5\cdot10^{-11}$.
\end{proof}

\begin{remark}\label{rem:size}
Let $\bar W(\theta,\rho)$ be the function represented by $\bar W$. Since $|e^{in_1\theta}T_{n_2}(2\rho^2-1)| \leq 1$ for all $(\theta,\rho) \in \R\times[0,1]$, \eqref{eq:enclosure} yields $|W^*-\bar W| \leq \delta_W$ on $\R\times[0,1]$. Moreover, $|\bar W| \leq \sum_n|\bar W_n| =: S$ and $|\partial_\theta\bar W| \leq \sum_n|n_1||\bar W_n|$ on $\R\times[0,1]$. Evaluating $\bar W(\theta,0)$ and $\bar W(\theta,1)$ with interval arithmetic on a uniform grid of $2001$ points of $[0,\pi]$, and using the above bound on $\partial_\theta\bar W$ and that $\bar W(\theta+\pi,\rho) = -\bar W(\theta,\rho)$, we obtain in \cite{code} a lower bound $m$ for $\max\{|\bar W(\theta,0)|,|\bar W(\theta,1)|\}$, valid for all $\theta \in \R$. Consequently,
\[
  m-\delta_W \leq \max_{0 \leq \rho \leq 1}|W^*(\theta,\rho)| \leq S+\delta_W \quad \text{ for all } \theta \in \R.
\]
For $(d,p) = (9,3)$, $(7,5)$ and $(6,7)$, we obtain $m-\delta_W \geq 27.55$, $1.244$ and $1.008$, which proves \eqref{eq:profilebound}, and $S+\delta_W \leq 336.9$, $23.07$ and $11.26$, respectively. Thus, for the solution $u^*$ of \eqref{eq:ustar}, $\|u^*(t)\|_{L^\infty(|x|\le T-t)}=(T-t)^{-\alpha}\max_{0\le\rho\le1}|W^*(\theta,\rho)|$ is comparable to $(T-t)^{-\alpha}$ from above and below.
\end{remark}

\appendix
\section{Rough self-similar blow-up}\label{app:rough}

The solutions of Theorem~\ref{thm:main} are smooth, and their profiles are real-analytic across the light cone. If one gives up regularity at the light cone, radial finite-energy solutions of the defocusing equation whose supremum becomes infinite in finite time exist for every energy-supercritical exponent, and their construction is elementary. This is a known phenomenon (see \cite[footnote~2]{Ta} and the self-similar solutions in Besov spaces of \cite{Pl,RY}); we give here a self-contained version with bounded, compactly supported data, which shows precisely which regularity is lost, and where. As explained in Remark~\ref{rem:linear}, the same loss of boundedness occurs for the free wave equation by linear focusing.

In this appendix $d\ge3$ is an integer, $p>1+\frac4{d-2}$ is real, and $u^p$ stands for $|u|^{p-1}u$.    Lemma~\ref{lem:ss} and the bound \eqref{eq:radialbound}, which are used below, hold for real $p$ with the same proofs: the former only uses $p\alpha=\alpha+2$, and the latter only uses the gradient part of the energy. With $\alpha=\frac2{p-1}$ and $s_c=\frac d2-\alpha$ as in \eqref{eq:supercritical}, we write
\begin{equation}\label{eq:appconst}
\ell:=\frac{d-1}2,\qquad\beta:=\ell-\alpha=s_c-\frac12,\qquad c_0:=\alpha(\alpha+1),
\end{equation}
so that the energy-supercritical condition $s_c>1$ reads $\beta>\frac12$. The exponent $\beta$ is the singular Frobenius exponent $s_c-\frac12$ of Remark~\ref{rem:noCSS} for the mode $n=0$.

\begin{theorem}\label{thm:rough}
There exist real radial compactly supported data
\[
u_0\in H^1(\R^d)\cap L^\infty(\R^d),\qquad u_1\in L^2(\R^d)\cap L^\infty(\R^d),
\]
and a distributional solution $u$ of \eqref{eq:nlw} on $[0,1)\times\R^d$ with these data, such that, for every $S<1$,
\[
u\in C([0,S];H^1(\R^d))\cap C^1([0,S];L^2(\R^d)),\qquad u\in C([0,S]\times\R^d)\cap L^\infty([0,S]\times\R^d),
\]
$u$ is unique in this class, $u(t)$ is supported in $\{|x|\le2+t\}$, and the energy $E(u)(t)$ is finite and conserved. Moreover, there are $A>0$, $t_*\in(0,1)$ and a positive profile $f\in C([0,1])\cap C^\infty([0,1))$, smooth and even at $\rho=0$, such that
\begin{equation}\label{eq:latecss}
u(t,x)=(1-t)^{-\alpha}f\Big(\frac{|x|}{1-t}\Big)\qquad\text{for }t_*\le t<1,\ |x|<1-t,
\end{equation}
and $u(t,0)=A(1-t)^{-\alpha}$ for every $0\le t<1$. In particular $\|u(t)\|_{L^\infty(\R^d)}\ge A(1-t)^{-\alpha}\to\infty$ as $t\uparrow1$.
\end{theorem}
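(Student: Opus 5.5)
The plan is to build $u$ in three regions: the backward light cone $\{|x|<1-t\}$, where $u$ is an explicit self-similar solution; the exterior; and an early-time layer $0\le t<t_*$ that lets us start from bounded compactly supported data. The key object is a positive $\tau$-independent profile $f$ solving the ODE obtained from \eqref{eq:ssw} with $W=f(\rho)$:
\[
(1-\rho^2)f''+\Big(\frac{d-1}\rho-2(\alpha+1)\rho\Big)f'-c_0f-f^p=0,\qquad f'(0)=0,\ f(0)=A .
\]
It is regular at $\rho=0$ and takes the singular branch $(1-\rho)^{\beta}$ at $\rho=1$. Since $\beta>\frac12$, such an $f$ is continuous on $[0,1]$ and $f'\in L^2$ near $\rho=1$ in the natural weight. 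This matches the finite energy claimed in the theorem and the statement that $f\in C([0,1])\cap C^\infty([0,1))$.

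\textbf{Step 1: constructing $f$.} Treat the ODE by shooting from $\rho=0$ with small $A>0$. The singular point $\rho=0$ is regular-singular with exponents $0,2-d$, so for each $A$ there is a unique solution even and smooth at $0$ with $f(0)=A$. Work on $[0,1)$. For small $A$ the nonlinearity is perturbative: write $f=Af_0+O(A^p)$, where $f_0$ is the regular solution of the linear hypergeometric equation (Lemma~\ref{lem:hyp} with $n=0$). That gives $f_0(\rho)={}_2F_1(\frac\alpha2,\frac{\alpha+1}2;\frac d2;\rho^2)$, which is positive on $[0,1)$ because all its parameters are positive.

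Because $c=\frac d2>a+b=\alpha+\frac12$ (this is exactly $\beta>0$), $f_0$ extends continuously to $\rho=1$, and near $1$ it behaves like a constant plus $(1-\rho)^\beta$ (with a log if $\beta\in\Z$). I will set up the nonlinear problem as a Volterra integral equation using the variation-of-constants formula with the two Frobenius solutions at $\rho=1$. A contraction in $C([0,1])$ with the weight adapted to $(1-\rho)^\beta$ then gives $f\in C([0,1])$ with $f>0$ and $f-Af_0=O(A^p)$ uniformly.

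Positivity should also follow from a maximum-principle-type argument. Where $f>0$, the term $-c_0f-f^p<0$ forces $f$ to be convex-like, so $f$ is increasing from $\rho=0$. It therefore cannot reach zero on $[0,1)$.

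\textbf{Step 2: the solution in the cone, outside it, and at early times.} Inside the cone, set $u^{\rm in}:=(1-t)^{-\alpha}f(|x|/(1-t))$. By Lemma~\ref{lem:ss} it solves \eqref{eq:nlw} classically in the open cone, is continuous up to $|x|=1-t$, and has $H^1$ trace.

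To obtain global-in-space, bounded, compactly supported data, take $t_*$ close to $1$. At time $t_*$, define data that equal $(u^{\rm in},\partial_tu^{\rm in})(t_*)$ on $|x|<1-t_*$ and are extended outside by a bounded $H^1\times L^2$ compactly supported continuation. For instance, continue $f$ past $\rho=1$ by its natural Frobenius continuation (which is $C^0$ with the $(\rho-1)^\beta$ branch) and then cut it off.

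Then solve backward from $t_*$ to $0$ and forward from $t_*$. By the energy-supercritical but bounded setting, I will use local well-posedness in $H^1\cap L^\infty$: the nonlinearity $|u|^{p-1}u$ is locally Lipschitz on bounded sets. Finite speed of propagation, via energy estimates on truncated cones, identifies the solution with $u^{\rm in}$ inside the cone for $t\ge t_*$.

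Simplest of all: choose the data at $t=0$ directly as the self-similar profile at time $0$ on $|x|<1$, continued outside. Then $t_*=0$ works for \eqref{eq:latecss}, and $u(t,0)=A(1-t)^{-\alpha}$ holds for all $t$. This matches the statement, with $t_*$ possibly taken $>0$ for convenience. Outside the cone, boundedness on $|x|\ge\varepsilon$ follows from energy conservation and the radial bound \eqref{eq:radialbound}. That bound only uses the gradient energy, hence the defocusing sign, and it rules out blow-up away from the tip, so the solution exists on $[0,1)$.

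\textbf{Step 3: regularity class, uniqueness, energy.} Continuity in $H^1$ and $C^1$ in $L^2$, together with energy conservation, follow from standard energy methods for bounded solutions. Data in $H^1\cap L^\infty$ with $u$ bounded on $[0,S]$ make the nonlinearity Lipschitz. Uniqueness in the class comes from Gronwall on the energy of the difference. The support statement $\{|x|\le2+t\}$ follows from finite speed of propagation with data supported in $|x|\le2$.

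\textbf{Main obstacle.} I expect the main obstacle to be Step 1 at $\rho=1$: controlling the nonlinear ODE uniformly up to the characteristic point in a space where the singular branch is allowed. It is also where the logarithmic cases $\beta\in\Z$ enter. I would handle it with the Volterra formulation based at $\rho=0$, using the explicit linear hypergeometric solutions, and with $A$ small.
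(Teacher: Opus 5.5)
There is a genuine gap in Step 2. Your Step 1 matches the paper in substance: the paper's contraction is $f\mapsto A\psi+\mathcal G(f^p)$ on $\{0\le f\le M\}\subset C([0,1])$, with a Green operator based at $\rho=0$, and your monotonicity argument is fine.

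The gap is that taking the slice of $U=(1-t)^{-\alpha}f(|x|/(1-t))$ as data, at $t=0$ or at $t_*$ (the same profile rescaled), gives $u_1(r)=\alpha f(r)+rf'(r)$. This velocity is generally unbounded. From $f'(\rho)=\rho^{1-d}(1-\rho^2)^{\beta-1}\int_0^\rho t^{d-1}(1-t^2)^{-\beta}(c_0f+f^p)\,dt$, whose integrand is positive, $f'(\rho)\to+\infty$ as $\rho\uparrow1$. It grows like $(1-\rho)^{\beta-1}$ for $\frac12<\beta<1$ and like $|\log(1-\rho)|$ for $\beta=1$. So on the whole range $1<s_c\le\frac32$ covered by the theorem (e.g.\ $d=3$, $p>5$, or $d=4$, $3<p\le5$), your $u_1\notin L^\infty$. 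You only checked $f'\in L^2$. Your direct approach does work when $\beta>1$.

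You also cannot simply keep $U$'s data on a smaller ball $|x|<R$. Finite speed of propagation then identifies $u$ with $U$ only in the cone $|x|<R-t$, whose vertex is at $t=R<1$, so the blow-up at $t=1$ is lost. What is missing is a way to modify the data on the annulus $R\le|x|<1$ while keeping $u=U$ in the late cone. The paper does this with a symmetric Goursat problem for $v=r^\ell u$ in null coordinates $x=r+t$, $y=r-t$ on $[R,1]^2$. The trace $g$ of $U$ is prescribed on both $y=R$ and $x=R$ (Lemma~\ref{lem:goursat}).
\begin{itemize}
\item The symmetry $V(x,y)=V(y,x)$ makes the velocity vanish on $\{t=0,\ R\le r<1\}$.
\item The condition $\beta>\frac12$ gives $g\in H^1$, hence finite energy.
\item Gluing $U$ and $r^{-\ell}V$ across the characteristic $r-t=R$ gives a weak solution that equals $U$ in the whole backward cone once $t\ge t_*=\frac{1-R}2$.
\end{itemize}
This is why $t_*>0$ in the statement. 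It also means the identification of the Cauchy solution must be made by comparison with the glued field, not with $U$ alone.

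A second, fixable gap: there is no local well-posedness theory ``in $H^1\cap L^\infty$'' for the wave equation. The linear flow does not propagate $L^\infty$ bounds; Remark~\ref{rem:linear} shows bounded data focusing into an unbounded free wave. So your appeal to ``locally Lipschitz on bounded sets'' is circular, and solving backward from $t_*$ has no supercritical theory behind it.

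The paper instead uses the globally Lipschitz truncation $\mathcal N_J(z)=\mathrm{sgn}(z)\min\{|z|^p,J^p\}$ to get a global energy solution $u_J$. It then identifies $u_J$ with the explicit bounded glued field in the cone on $[0,1-\delta]$ by a local energy estimate. Off a small ball around the axis it bounds $u_J$ by \eqref{eq:radialbound}, uniformly in $J$. It concludes that the truncation is inactive for $J$ large. You already have the right ingredients (finite speed of propagation and the radial bound); they need to be assembled in this order.
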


The solution is continuously self-similar in the late backward cone, so that no periodic modulation in $\tau$ is needed, and it is not smooth across its characteristic interfaces: the profile $f$ is a small perturbation of $A\psi$, which contains a nonsmooth contribution of order $(1-\rho)^\beta$, with a logarithmic factor when $\beta$ is an integer (Remark~\ref{rem:linear}); $f$ is not $C^1$ there when $\beta\le1$ (Lemma~\ref{lem:fprime}), and the data are not Lipschitz in that case. We do not rule out a weak continuation past $t=1$. The construction has two steps: a small positive $\tau$-independent profile, obtained by a contraction in $C([0,1])$ from the regular solution of the mode-$0$ linear equation, and a symmetric characteristic problem which removes an unbounded part of the initial velocity near $|x|=1$; the latter is needed exactly when $\beta\le1$.

\subsection{A small positive self-similar profile}
By Lemma~\ref{lem:ss}, $U(t,r)=(1-t)^{-\alpha}f(r/(1-t))$ solves \eqref{eq:nlw} in $\{r<1-t\}$ if and only if
\begin{equation}\label{eq:css}
Lf=f^p,\qquad Lf:=(1-\rho^2)f''+\Big(\frac{d-1}\rho-2(\alpha+1)\rho\Big)f'-c_0f .
\end{equation}
We look for a solution with $f(0)=A>0$ and $f'(0)=0$. With $P(\rho):=\rho^{d-1}(1-\rho^2)^{1-\beta}$ one checks that $Lw=\frac{1-\rho^2}{P}(Pw')'-c_0w$. The regular solution of $L\psi=0$ with $\psi(0)=1$ is the hypergeometric function of Lemma~\ref{lem:hyp} for $n=0$,
\[
\psi(\rho)={}_2F_1\Big(\frac\alpha2,\frac{\alpha+1}2;\frac d2;\rho^2\Big),
\]
whose series has positive coefficients; by Gauss's summation formula \cite[(15.4.20)]{DLMF}, since $\frac d2-\frac\alpha2-\frac{\alpha+1}2=\beta>0$,
\begin{equation}\label{eq:H}
\psi(1)=H:=\frac{\Gamma(\frac d2)\Gamma(\beta)}{\Gamma(\frac{d-\alpha}2)\Gamma(\frac{d-\alpha-1}2)}<\infty,\qquad H>1 .
\end{equation}

\begin{lemma}\label{lem:green}
The operator
\begin{equation}\label{eq:green}
(\mathcal Gg)(\rho):=\psi(\rho)\int_0^\rho\frac1{P(z)\psi(z)^2}\int_0^z\frac{P(t)\psi(t)}{1-t^2}\,g(t)\,dt\,dz
\end{equation}
is a positive bounded operator on $C([0,1])$, $L\mathcal Gg=g$ with $(\mathcal Gg)(0)=(\mathcal Gg)'(0)=0$, and $\|\mathcal G\|=K:=\frac{H-1}{c_0}$.
\end{lemma}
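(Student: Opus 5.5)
We prove Lemma~\ref{lem:green} by reduction of order around the positive solution $\psi$, using the divergence form $Lw=\frac{1-\rho^2}{P}(Pw')'-c_0w$.

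\emph{Step 1: positivity and monotonicity of $\psi$.} Since the hypergeometric series of $\psi$ has positive coefficients, $\psi\ge1$ and $\psi'\ge0$ on $[0,1)$. Moreover $\psi$ is continuous up to $\rho=1$ with $\psi(1)=H$ by \eqref{eq:H}.

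\emph{Step 2: $\mathcal G$ inverts $L$.} For $w=\psi v$, a direct computation gives
\[
\frac{1-\rho^2}{P}(Pw')'=\psi\,\frac{1-\rho^2}{P}(Pv')'+2(1-\rho^2)\psi'v'+v\,\frac{1-\rho^2}{P}(P\psi')'.
\]
The last term equals $c_0\psi v$ because $L\psi=0$. Hence
\[
Lw=\frac{1-\rho^2}{P\psi}\big(P\psi^2v'\big)'.
\]
So $Lw=g$ is equivalent to $(P\psi^2v')'=P\psi g/(1-\rho^2)$. Integrating twice from $0$ gives exactly \eqref{eq:green}.

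Near $\rho=0$ we have $P(t)\sim t^{d-1}$, so the inner integral is $O(z^d)$, $v'=O(z)$, and $v=O(\rho^2)$. This gives $(\mathcal Gg)(0)=(\mathcal Gg)'(0)=0$ and smoothness at the origin. Positivity of $\mathcal G$ is immediate because every factor in \eqref{eq:green} is positive.

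\emph{Step 3: continuity at $\rho=1$.} Near $z=1$,
\[
\frac{P(t)}{1-t^2}\sim(1-t)^{-\beta},\qquad \frac1{P(z)}\sim(1-z)^{\beta-1}.
\]
If $\beta<1$, the inner integral is bounded. Since $\int (1-z)^{\beta-1}\,dz$ converges for $\beta>0$, the outer integral converges. If $\beta\ge1$, the inner integral is $O\big((1-z)^{1-\beta}\big)$, with a logarithm when $\beta=1$. Its product with $(1-z)^{\beta-1}$ is then bounded, or grows only logarithmically. In both cases the integrand is integrable, so $\mathcal Gg\in C([0,1])$. Here $\beta>\frac12>0$ is the supercriticality assumption.

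\emph{Step 4: computing the norm.} By positivity, $\|\mathcal G\|=\sup_\rho(\mathcal G1)(\rho)$. Set $w_1:=(\psi-1)/c_0$. Then $Lw_1=(0+c_0)/c_0=1$, and $w_1(0)=w_1'(0)=0$.

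I would identify $\mathcal G1=w_1$ by uniqueness for the singular ODE at $\rho=0$. Writing $w_1=\psi v$, the function $v$ solves $(P\psi^2v')'=P\psi/(1-\rho^2)$ with $v'(0)=0$. The homogeneous solution $v'=c/(P\psi^2)$ is excluded, since it is not integrable (or not bounded in $v'$) at $0$ for $d\ge3$. The two expressions therefore agree.

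Finally, $w_1=(\psi-1)/c_0$ is nondecreasing because $\psi'\ge0$, so its maximum is attained at $\rho=1$. This gives $\|\mathcal G\|=(H-1)/c_0=K$, and $H>1$ holds since $\psi$ is strictly increasing.

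The main obstacle is Step 3, the boundary behaviour at the characteristic point $\rho=1$. The critical case is $\beta\ge1$, where the inner integral diverges and one must check that the product of the two power rates stays integrable. If the direct estimate is awkward, the fallback is to bound $\mathcal Gg$ by comparison with $\|g\|\,\mathcal G1=\|g\|(\psi-1)/c_0$, which is already known to be continuous on $[0,1]$. This comparison gives continuity via dominated convergence.
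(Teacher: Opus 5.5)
Your proof is correct and follows the paper's argument: the reduction-of-order identity $L(\psi v)=\frac{1-\rho^2}{P\psi}(P\psi^2v')'$, positivity of the kernel, the identification $\mathcal G1=(\psi-1)/c_0$ by uniqueness of the solution regular at the axis, and $\|\mathcal G\|=\|\mathcal G1\|_\infty=(H-1)/c_0$. The only difference is continuity at $\rho=1$. Your case analysis in $\beta$ is valid. The paper avoids it, essentially by your fallback: the kernel is nonnegative and nondecreasing in $\rho$, so $|(\mathcal Gg)(\rho_2)-(\mathcal Gg)(\rho_1)|\le\|g\|_\infty\,(\psi(\rho_2)-\psi(\rho_1))/c_0$ for $\rho_2\ge\rho_1$, which reduces everything to the continuity of $\psi$ on $[0,1]$.
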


\begin{proof}
The axis singularity is integrable: the inner integral is $O(z^d)$ and the outer integrand is $O(z)$. The identity $L(\psi v)=\frac{1-\rho^2}{P\psi}(P\psi^2v')'$ gives the equation and the conditions at $0$. Uniqueness of the regular solution of the linear problem gives $\mathcal G1=\frac{\psi-1}{c_0}$, since $L1=-c_0$. The kernel is nonnegative and increases with $\rho$, including the factor $\psi(\rho)$, hence for $\rho_2\ge\rho_1$
\[
|(\mathcal Gg)(\rho_2)-(\mathcal Gg)(\rho_1)|\le\|g\|_\infty\frac{\psi(\rho_2)-\psi(\rho_1)}{c_0},
\]
which proves the continuity of $\mathcal Gg$ up to $\rho=1$, by \eqref{eq:H}, and $\|\mathcal G\|=\|\mathcal G1\|_\infty=K$.
\end{proof}

Let
\begin{equation}\label{eq:smallness}
M:=(4pK)^{-\frac1{p-1}},\qquad A:=\frac M{2H},
\end{equation}
and consider the map $f\mapsto A\psi+\mathcal G(f^p)$ on the order interval $\{0\le f\le M\}$ of $C([0,1])$. It takes values between $A\psi>0$ and $AH+KM^p=\frac M2+\frac M{4p}<M$, and its Lipschitz constant is $pKM^{p-1}=\frac14$. Its unique fixed point $f$ solves \eqref{eq:css}, with $A\le f\le M$, and $f$ is increasing, as the sum of two increasing functions. For $\rho<1$ the function $f$ is smooth, including at the axis: in the Volterra form of \eqref{eq:css} (the integral formula for $f'$ in the proof of Lemma~\ref{lem:fprime} below), which gives $f'=O(\rho)$, successive differentiations in $\rho^2$ bootstrap every derivative, and since $f\ge A>0$ the power $f^p$ is smooth for every real $p$. We fix
\begin{equation}\label{eq:U}
U(t,r):=(1-t)^{-\alpha}f\Big(\frac r{1-t}\Big),\qquad0\le r<1-t ,
\end{equation}
a classical solution of \eqref{eq:nlw} in the open cone.

\begin{lemma}\label{lem:fprime}
$f\in H^1(B_1)$, and as $\rho\uparrow1$,
\[
f'(\rho)=\begin{cases}O\big((1-\rho)^{\beta-1}\big)&\text{if }\tfrac12<\beta<1,\\ O(1+|\log(1-\rho)|)&\text{if }\beta=1,\\ O(1)&\text{if }\beta>1.\end{cases}
\]
\end{lemma}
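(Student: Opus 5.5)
The plan is to start from the Volterra (first-order) form of \eqref{eq:css}, which follows from the identity $Lw=\frac{1-\rho^2}{P}(Pw')'-c_0w$. Since $f$ is regular at the axis ($f'(0)=0$ and $Pf'\to0$ as $\rho\to0$), integrating $(Pf')'=\frac{P}{1-\rho^2}(f^p+c_0f)$ from $0$ gives
\[
f'(\rho)=\frac{1}{P(\rho)}\int_0^\rho\frac{P(t)}{1-t^2}\big(f(t)^p+c_0f(t)\big)\,dt .
\]
We know $0<A\le f\le M$ from the fixed-point construction. The integrand is therefore bounded above and below by constant multiples of $t^{d-1}(1-t)^{-\beta}$ near $t=1$. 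Also $P(\rho)^{-1}\simeq(1-\rho)^{\beta-1}$ there, and $f'\ge0$.

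The three cases then come from estimating $I(\rho):=\int_0^\rho t^{d-1}(1-t)^{-\beta}\,dt$ as $\rho\uparrow1$. If $\beta<1$, $I$ stays bounded, so $f'=O((1-\rho)^{\beta-1})$. If $\beta=1$, $I=O(1+|\log(1-\rho)|)$ and $P^{-1}\simeq1$, which gives the logarithmic bound. If $\beta>1$, $I=O((1-\rho)^{1-\beta})$, and multiplying by $(1-\rho)^{\beta-1}$ gives $O(1)$. Near $\rho=0$ no care is needed, since $f'=O(\rho)$ as already noted.

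For $f\in H^1(B_1)$, note that $f$ is bounded, so $f\in L^2(B_1)$. In radial coordinates, $\|\nabla f\|_{L^2(B_1)}^2=c_d\int_0^1|f'|^2\rho^{d-1}\,d\rho$. Only the endpoint $\rho=1$ matters, and only in the case $\frac12<\beta<1$, where $|f'|^2\lesssim(1-\rho)^{2\beta-2}$. This is integrable exactly because $2\beta-2>-1$, i.e.\ $\beta>\frac12$, which is the energy-supercritical condition. The logarithmic and bounded cases are immediate. Finally, the weak derivative of the radial function coincides with the classical one on $B_1\setminus\{0\}$, because $f\in C([0,1])\cap C^\infty([0,1))$ is smooth and even at $0$. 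So $f\in H^1(B_1)$.

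The main (mild) obstacle is justifying the Volterra formula at the axis, i.e.\ that no homogeneous term $c/P$ is present. This follows from the construction $f=A\psi+\mathcal G(f^p)$. Indeed, $\psi$ is smooth and even at $0$, and Lemma~\ref{lem:green} gives $(\mathcal Gg)'(0)=0$ with the explicit integral representation. Hence $Pf'\to0$ as $\rho\to0$, and the integration from $0$ is legitimate.
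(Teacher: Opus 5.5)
Your proposal is correct and follows the paper's proof: you integrate $(Pf')'=\frac{P}{1-\rho^2}(c_0f+f^p)$ from the axis, use that the bracket is positive and bounded, read off the three cases from $\int_0^\rho(1-t)^{-\beta}\,dt$, and get $f\in H^1(B_1)$ from $2\beta-2>-1$. Your extra check that $Pf'\to0$ at the axis, via $f=A\psi+\mathcal G(f^p)$, is valid and makes explicit a step the paper leaves implicit.
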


\begin{proof}
Integrating $(Pf')'=\frac P{1-\rho^2}(c_0f+f^p)$,
\[
f'(\rho)=\rho^{1-d}(1-\rho^2)^{\beta-1}\int_0^\rho t^{d-1}(1-t^2)^{-\beta}\big(c_0f(t)+f(t)^p\big)\,dt,
\]
where the bracket is positive and bounded; the integral of $(1-t)^{-\beta}$ gives the three cases. Near the axis $f'=O(\rho)$, and $f'$ is square integrable near $\rho=1$ since $2\beta-2>-1$.
\end{proof}

The initial velocity of $U$ is $\partial_tU(0,r)=\alpha f(r)+rf'(r)$: it is unbounded near $r=1$ when $\beta\le1$. This is why the whole slice $t=0$ of $U$ is not used as the initial datum.

\subsection{A symmetric characteristic problem}
For radial functions let $v:=r^\ell u$ and $x:=r+t$, $y:=r-t$, so that $r=\frac{x+y}2$. Then \eqref{eq:nlw} reads
\begin{equation}\label{eq:vxy}
v_{xy}=F(x,y,v),\qquad F(x,y,z):=\frac{\ell(\ell-1)}{(x+y)^2}z+\frac14\Big(\frac{x+y}2\Big)^{-\ell(p-1)}z^p,
\end{equation}
and $F(x,y,z)=F(y,x,z)$. With $C_0:=\ell(\ell-1)$, $C_1:=2^{\ell(p-1)-2}$ and $L_*:=C_0+pC_1(4M)^{p-1}$, we fix
\begin{equation}\label{eq:R}
\eta:=\min\Big\{\frac14,\frac1{2\sqrt{1+L_*}}\Big\},\qquad R:=1-\eta,\qquad t_*:=\frac\eta2 ,
\end{equation}
so that $R\ge\frac34$ and $\eta^2L_*<\frac14$. On the characteristic $y=R$ we prescribe the trace of $U$,
\begin{equation}\label{eq:g}
g(x):=\Big(\frac{x+R}2\Big)^\ell\,U\Big(\frac{x-R}2,\frac{x+R}2\Big),\qquad R\le x\le1,
\end{equation}
extended by continuity to $x=1$, where $r=1-t$, using $f(1)$. Along this segment $r\le s:=1-t\le1$, hence $r^\ell s^{-\alpha}\le s^\beta\le1$: $g$ is positive, increasing, $\|g\|_\infty\le M$, and since $s^2-r^2=(1+R)(1-x)$, Lemma~\ref{lem:fprime} gives
\begin{equation}\label{eq:gH1}
g\in C([R,1])\cap H^1(R,1).
\end{equation}

\begin{lemma}\label{lem:goursat}
The integral equation
\begin{equation}\label{eq:goursat}
V(x,y)=g(x)+g(y)-g(R)+\int_R^x\!\!\int_R^yF(\xi,\zeta,V(\xi,\zeta))\,d\zeta\,d\xi,\qquad(x,y)\in[R,1]^2,
\end{equation}
has a unique solution with $\|V\|_\infty\le4M$. It is continuous, positive and symmetric, $V(x,y)=V(y,x)$, its weak derivatives
\begin{equation}\label{eq:Vx}
V_x(x,y)=g'(x)+\int_R^yF(x,\zeta,V(x,\zeta))\,d\zeta,\qquad V_y(x,y)=g'(y)+\int_R^xF(\xi,y,V(\xi,y))\,d\xi
\end{equation}
are square integrable, $V$ is a classical solution of \eqref{eq:vxy} away from $\{x=1\}\cup\{y=1\}$, and $(\partial_x-\partial_y)V(r,r)=0$ for $R\le r<1$.
\end{lemma}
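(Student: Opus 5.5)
The plan is to solve \eqref{eq:goursat} by Picard iteration in $C([R,1]^2)$ with the sup norm, and then read off the remaining properties from the integral equation.

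\textbf{Step 1: contraction.} On the closed set $B:=\{V\in C([R,1]^2):\|V\|_\infty\le4M\}$ define
\[
(\Phi V)(x,y):=g(x)+g(y)-g(R)+\int_R^x\!\!\int_R^yF(\xi,\zeta,V)\,d\zeta\,d\xi .
\]
On $[R,1]^2$ we have $x+y\ge2R\ge\frac32$, so $\frac{x+y}2\ge\frac34\ge\frac12$. For $|z|\le4M$ this gives
\[
\Big|\frac{\ell(\ell-1)}{(x+y)^2}\Big|\le C_0,\qquad \Big|\tfrac14\big(\tfrac{x+y}2\big)^{-\ell(p-1)}\Big|\le\tfrac14\,2^{\ell(p-1)}=C_1 .
\]
Hence $|F|\le C_0|z|+C_1|z|^p$ and $|\partial_zF|\le L_*$. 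We also have $0\le g\le M$, so $|g(x)+g(y)-g(R)|\le2M$. The domain has area $\eta^2$, so
\[
|\Phi V|\le2M+\eta^2\big(4MC_0+C_1(4M)^p\big)\le2M+\eta^2L_*\,4M\le3M .
\]
The Lipschitz constant of $\Phi$ is at most $\eta^2L_*<\frac14$. Continuity of $\Phi V$ follows from $g\in C([R,1])$, using \eqref{eq:gH1}. Banach's fixed point theorem then gives existence and uniqueness in $B$.

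\textbf{Step 2: symmetry and positivity.} The map $V\mapsto V^\sharp(x,y):=V(y,x)$ preserves $B$. Because $F(\xi,\zeta,z)=F(\zeta,\xi,z)$ and the data $g(x)+g(y)-g(R)$ are symmetric, it commutes with $\Phi$, so uniqueness gives $V=V^\sharp$.

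For positivity, note that $F(\xi,\zeta,z)$ has the sign of $z$ when $\ell\ge1$, and that $g$ is increasing with $g(R)>0$. Hence $g(x)+g(y)-g(R)\ge g(R)>0$. Starting the iteration from $V_0:=g(x)+g(y)-g(R)>0$, every iterate is $\ge V_0$, because the integral term is nonnegative for nonnegative arguments. The limit therefore satisfies $V\ge g(R)>0$.

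\textbf{Step 3: derivatives and the equation.} Differentiating \eqref{eq:goursat} in $x$ gives \eqref{eq:Vx} in the weak sense. This uses $g\in H^1$ from \eqref{eq:gH1}, absolute continuity of $g$, and Fubini. The integral terms are continuous and bounded, so $V_x,V_y\in L^2$.

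Away from $\{x=1\}\cup\{y=1\}$ we need $g$ to be smooth. On $[R,1)$, $g$ is a composition of the smooth function $U$ on the open cone (since $f\in C^\infty([0,1))$) with smooth maps. So $g'$ is continuous there, $V$ is $C^1$ with mixed derivative $V_{xy}=F$ continuous, and a bootstrap gives a classical solution.

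Finally, on the diagonal,
\[
(\partial_x-\partial_y)V(r,r)=g'(r)-g'(r)+\int_R^rF(r,\zeta,V(r,\zeta))\,d\zeta-\int_R^rF(\xi,r,V(\xi,r))\,d\xi ,
\]
and the two integrals cancel by symmetry of $F$ and $V$.

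\textbf{Main obstacle.} The delicate point is the low regularity of $g$ at $x=1$: $g'$ may blow up like $(1-x)^{\beta-1}$ when $\beta<1$. For this reason the weak derivatives are treated only in $L^2$, and the classical statements are restricted to $x,y<1$.
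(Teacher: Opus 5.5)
Your proposal is correct and follows essentially the same route as the paper. The common steps are a contraction of constant $\eta^2L_*<\frac14$ on the ball $\|V\|_\infty\le4M$, symmetry from uniqueness and $F(x,y,z)=F(y,x,z)$, positivity by iterating from the boundary term $g(x)+g(y)-g(R)\ge g(R)>0$ using $F\ge0$ for $z\ge0$, weak derivatives read off from the integral equation with $g'\in L^2$, and cancellation of the two integrals on the diagonal by symmetry. Your extra checks — the explicit bounds on the coefficients of $F$ when $\frac{x+y}2\ge\frac34$, and the smoothness of $g$ on $[R,1)$ because that part of the segment $y=R$ lies in the open cone — only fill in details the paper leaves implicit.
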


\begin{proof}
On the square $r=\frac{x+y}2\ge R\ge\frac34$, so that $|F(x,y,z)|\le C_0|z|+C_1|z|^p$ and $|F(x,y,z)-F(x,y,w)|\le L_*|z-w|$ for $|z|,|w|\le4M$. The boundary term of \eqref{eq:goursat} has norm at most $2M$ and the integral term at most $4M\eta^2L_*<M$ on the ball $\|V\|_\infty\le4M$, on which the map is a contraction of constant $\eta^2L_*<\frac14$. Uniqueness and the symmetry of \eqref{eq:vxy} give $V(x,y)=V(y,x)$; iterating from the boundary term, which is at least $g(R)>0$, and using $F(x,y,z)\ge0$ for $z\ge0$, gives $V>0$. The formulas \eqref{eq:Vx} follow from \eqref{eq:goursat}, and $g'\in L^2$ by \eqref{eq:gH1}; differentiating them gives the classical regularity away from $x=1$ and $y=1$. On the diagonal, the symmetry gives $(\partial_x-\partial_y)V(r,r)=0$.
\end{proof}

On the diagonal $x=y=r$, which is the initial slice $t=0$, $\partial_t=\partial_x-\partial_y$: the solution of Lemma~\ref{lem:goursat} has zero initial velocity on $R\le r<1$, and its initial displacement $V(r,r)$ is bounded with the weak derivative $2g'(r)$ plus a bounded function, hence belongs to $H^1(R,1)$.

\subsection{Gluing in the backward cone and proof of Theorem~\ref{thm:rough}}
On $0\le t<1$, $0\le r<1-t$, define
\begin{equation}\label{eq:patch}
Q(t,r):=\begin{cases}U(t,r),&r-t<R,\\ r^{-\ell}V(r+t,r-t),&r-t\ge R,\end{cases}
\end{equation}
where the second case lies in the square of Lemma~\ref{lem:goursat}, since $R\le r-t\le r+t<1$. On the interface $y=r-t=R$ the two pieces have, in the variable $v=r^\ell u$, the same trace $g(x)$, by \eqref{eq:g} and \eqref{eq:goursat}, so that their tangential derivatives $v_x=g'(x)$ agree too, and $v_{xy}=\partial_y(v_x)$ has no singular part on the interface: $Q$ solves \eqref{eq:vxy}, hence \eqref{eq:nlw}, in the distributional sense in the open cone (the normal derivative may jump across the characteristic $y=R$, as it may across any characteristic). The axis is in the region of $U$. On every cone $r+t\le1-\eps$ the two pieces have bounded derivatives on compact time intervals, their gluing is in the energy class, and \eqref{eq:Vx}, \eqref{eq:gH1} give a finite energy on the slices up to the endpoint $r+t=1$, continuous in $t$ (continuity of translations in $L^2$, and the width of the patch vanishes at $t=t_*$). Finally, if $t\ge t_*=\frac{1-R}2$ and $r<1-t$ then $r-t<1-2t\le R$: for $t\ge t_*$, $Q=U$ in the whole backward cone, and $Q(t,0)=U(t,0)=A(1-t)^{-\alpha}$ for every $t$.

\begin{proof}[Proof of Theorem~\ref{thm:rough}]
\emph{Data.} For $r<1$ let
\[
u_0(r):=\begin{cases}f(r),&r<R,\\ r^{-\ell}V(r,r),&R\le r<1,\end{cases}\qquad u_1(r):=\begin{cases}\alpha f(r)+rf'(r),&r<R,\\0,&R<r<1,\end{cases}
\]
which are the data of $Q$. The displacement is continuous at $r=R$, since $V(R,R)=g(R)=R^\ell f(R)$, and $u_0\in H^1(B_1)\cap L^\infty(B_1)$ by the above; $u_1$ is bounded, since $R<1$, with a jump at $r=R$. Let $b:=u_0(1)$ and let $\chi$ be a smooth radial cutoff, equal to $1$ on $[0,\frac54]$ and to $0$ on $[2,\infty)$; for $r\ge1$ let $u_0(r):=b\chi(r)$ and $u_1(r):=0$. The displacement is continuous at $r=1$, so its weak gradient has no surface measure, and $(u_0,u_1)$ are in the spaces of the theorem, with a finite energy $E_0$.

\emph{Cauchy realization.} For $J>0$ let $\mathcal N_J(z):=\mathrm{sgn}(z)\min\{|z|^p,J^p\}$ and $\Phi_J(z):=\int_0^z\mathcal N_J$, so that $0\le\Phi_J(z)\le\frac{|z|^{p+1}}{p+1}$. The nonlinearity $\mathcal N_J$ is globally Lipschitz, and the linear energy estimates with Duhamel's formula give a unique global energy solution $u_J$ of $\partial_t^2u-\Delta u+\mathcal N_J(u)=0$ with the data $(u_0,u_1)$; its energy $\int\frac12u_t^2+\frac12|\nabla u|^2+\Phi_J(u)$ is conserved and bounded by $E_0$, and $u_J(t)$ is radial and supported in $\{r\le2+t\}$. Fix $0<\delta<1$. On $[0,1-\delta]$ the field $Q$ is bounded by $C_\delta:=\max\{M\delta^{-\alpha},4MR^{-\ell}\}$ in the cone, and for $J>C_\delta$ it solves the truncated equation there, with the same data as $u_J$ for $r<1$. The difference $w:=u_J-Q$ is then an energy solution of $w_{tt}-\Delta w+aw=0$ with $|a|\le pJ^{p-1}$ and zero data in $B_1$; the local energy $\mathcal E_b(t):=\frac12\int_{|x|<b-t}(w_t^2+|\nabla w|^2+w^2)\,dx$, $b<1$, satisfies $\frac d{dt}\mathcal E_b\le(1+pJ^{p-1})\mathcal E_b$, the boundary term having a sign on the characteristic cone, so that $w=0$ on every cone $r+t<b$, hence
\begin{equation}\label{eq:identify}
u_J=Q\qquad\text{for }r<1-t,\ 0\le t\le1-\delta .
\end{equation}
This comparison uses the whole field \eqref{eq:patch} and the annular data: finite speed of propagation from the unchanged core alone would not give an expanding self-similar region. Inside the ball $r<\frac\delta2$ we have $u_J=U$ by \eqref{eq:identify}, and outside it the radial bound \eqref{eq:radialbound}, which uses only the gradient part of the energy, gives $|u_J(t,x)|\le C_Er^{-\frac{d-2}2}$ with $C_E:=(2E_0/((d-2)|\mathbb S^{d-1}|))^{1/2}$, uniformly in $J$. Choosing $J>\max\{C_\delta,C_E(\delta/2)^{-\frac{d-2}2}\}$, the truncation is inactive on $[0,1-\delta]$, and $u_J$ solves \eqref{eq:nlw} there. For different large $J$ these solutions agree on their common intervals, by the same difference estimate, and define a solution $u$ on $[0,1)$ with the asserted energy continuity and conservation (on each interval the truncated potential equals $\frac{|u|^{p+1}}{p+1}$); the difference estimate also gives the uniqueness in the class of energy solutions with bounded displacement.

\emph{Continuity and blow-up.} Radial $H^1$ functions have continuous representatives on annuli away from the origin, so that $u\in C_tH^1$ is jointly continuous there, and $u=U$ near the axis: $u$ is continuous on $[0,S]\times\R^d$ for every $S<1$. Finally \eqref{eq:identify} gives \eqref{eq:latecss}, $u(t,0)=A(1-t)^{-\alpha}$, and the blow-up of the supremum.
\end{proof}

\begin{remark}[The linear mechanism]\label{rem:linear}
The proof covers every integer $d\ge3$ and every real $p>1+\frac4{d-2}$ with the same two contractions, without any numerical step; the supercriticality enters only through $\beta>\frac12$, which makes the characteristic trace $g$ an $H^1$ function. The displacement has an unbounded gradient at $r=1$ when $\beta\le1$, and bounded jumps of derivatives across $r=R$ and across the characteristic interfaces, so that no smooth-data statement follows, in any dimension.

The same loss of boundedness already occurs for the free wave equation. Indeed, since $L\psi=0$, the computation in the proof of Lemma~\ref{lem:ss} shows that
\begin{equation}\label{eq:free}
v(t,x):=(1-t)^{-\alpha}\,\psi\Big(\frac{|x|}{1-t}\Big)
\end{equation}
solves the \emph{free} wave equation $\partial_t^2v-\Delta v=0$ in $\K_1$. It is bounded by $H(1-t)^{-\alpha}$, it satisfies $v(t,0)=(1-t)^{-\alpha}$, and its data on the unit ball, $(\psi,\alpha\psi+\rho\psi')$, belong to $H^1\times L^2$ by the proof of Lemma~\ref{lem:fprime}, and to $H^s\times H^{s-1}$ for every $s<s_c$, but not for $s=s_c$: by the connection formula \cite[(15.8.4)]{DLMF}, $\psi$ contains the singular branch $(1-\rho^2)^\beta$ at $\rho=1$ with a nonzero coefficient, namely $\Gamma(\frac d2)\Gamma(-\beta)/\big(\Gamma(\frac\alpha2)\Gamma(\frac{\alpha+1}2)\big)$ when $\beta\notin\N$ (when $\beta\in\N$ the branch carries a logarithm). The unboundedness of $v$ at the vertex of the cone is the focusing of the conormal singularity of order $\beta$ carried by the sphere $\{|x|=1\}$ at time $0$. 

The displayed data have bounded displacement, but their velocity is unbounded when $\beta\le1$. Applying the symmetric characteristic construction above with $f=A\psi$ and with the nonlinear term removed gives a free wave with both initial fields bounded, compactly supported and of finite energy, and with the same growth $A(1-t)^{-\alpha}$ on the axis. These are analogous linear data; the comparison does not identify the free evolution of the nonlinear solution's Cauchy data.

For the nonlinear solution, the identity $f=A\psi+\mathcal G(f^p)$ gives, in the late cone,
\[
u(t,x)=Av(t,x)+(1-t)^{-\alpha}(\mathcal G(f^p))\Big(\frac{|x|}{1-t}\Big).
\]
The correction vanishes on the axis, so that $u(t,0)=Av(t,0)=A(1-t)^{-\alpha}$. Along every fixed similarity ray $|x|=\rho(1-t)$ with $0<\rho<1$, the positive Green kernel gives $\mathcal G(f^p)(\rho)>0$, so the correction also grows like $(1-t)^{-\alpha}$. Thus loss of boundedness at this rate alone does not distinguish nonlinear blow-up from linear focusing. The supercriticality enters through the finite energy of the data.

By contrast, the profiles of Theorem~\ref{thm:main} are real-analytic across the light cone, and the free wave equation has no nonzero DSS profile with this property, for any frequency $\omega>0$. The zero mode is excluded by the preceding expansion of $\psi$. For $n\ne0$, the solution ${}_2F_1(a_n,b_n;\frac d2;\rho^2)$ of the mode equation $L_nV=0$ of Lemma~\ref{lem:hyp} which is regular at the axis contains the singular branch $(1-\rho^2)^{s_c-\frac12-in\omega}$ (the branch $(1-\rho)^{s_c-\frac12-in\omega}$ of Remark~\ref{rem:noCSS}), with the nonzero coefficient $\Gamma(\frac d2)\Gamma(-\beta+in\omega)/\big(\Gamma(a_n)\Gamma(b_n)\big)$, by the same connection formula. In this sense Theorem~\ref{thm:rough} locates the difficulty of the smooth problem: not the existence of a self-similar singularity of the defocusing equation, but its regularity across the light cone.
\end{remark}

\end{document}